\newtheorem{thm}{Theorem}[section]
\newtheorem{lem}[thm]{Lemma}
\newtheorem{definition}{Definition}[section]
\newtheorem{prop}[thm]{Proposition}
\newtheorem{rem}{Remark}[section]
\newtheorem{hyp}{Hypothesis}[section]
\numberwithin{equation}{section}
\def\a{\alpha}
\def\b{\beta}
\def\d{\delta}
\def\e{\epsilon}
\def\g{\gamma}
\def\G{\Gamma}
\def\l{\lambda}
\def\o{\omega}
\def\O{\Omega}
\def\s{\sigma}
\def\t{\theta}
\def\Arg{{\text{Arg}}}
\def\C{{\mathbb C}}
\def\CC{{\mathcal C}}
\def\D{{\overline{\mathcal C}}}
\def\E{{\mathbb E}}
\def\F{{\mathcal F}}
\def\GT{{\text{GT}}}
\def\GUE{{\text{GUE}}}
\def\H{{\mathcal H}}
\def\Im{{\text{Im}}}
\def\MM{{\mathbb M}}
\def\N{{\mathbb N}} 
\def\P{{\mathbb P}}
\def\R{{\mathbb R}}
\def\Re{{\text{Re}}}
\def\S{{\mathfrak S}}
\def\supp{{\mbox{Supp}}}
\def\Z{{\mathbb Z}}
\def\an{{q_n}}
\newcommand{\p}[1]{\ldots,{\widehat{#1}},\ldots}
\begin{document}

\title{Universality properties of Gelfand-Tsetlin patterns}

\author{Anthony P. Metcalfe}
\address{Institutionen f\"{o}r Matematik, Royal Institute of Technology (KTH), 100 44
Stockholm, Sweden}
\email{metcalf@kth.se}

\begin{abstract}
A standard Gelfand-Tsetlin pattern of depth $n$ is a configuration of particles
in $\{1,\ldots,n\} \times \R$. For each $r \in \{1,\ldots,n\}$, $\{r\} \times \R$
is referred to as the $r^\text{th}$ level of the pattern. A standard Gelfand-Tsetlin
pattern has exactly $r$ particles on each level $r$, and
particles on adjacent levels satisfy an interlacing constraint.

Probability distributions on the set of Gelfand-Tsetlin patterns of depth $n$ arise
naturally as distributions of eigenvalue minor processes of random Hermitian matrices
of size $n$. We consider such probability spaces when the distribution of the matrix is
unitarily invariant, prove a determinantal structure for a broad subclass, and calculate
the correlation kernel.

In particular we consider the case where the eigenvalues of the random matrix are fixed.
This corresponds to choosing uniformly from the set of Gelfand-Tsetlin patterns whose
$n^\text{th}$ level is fixed at the eigenvalues of the matrix. Fixing $q_n \in \{1,\ldots,n\}$,
and letting $n \to \infty$ under the assumption that $\frac{q_n}n \to \a \in (0,1)$ and the
empirical distribution of the particles on the $n^\text{th}$ level converges weakly, the
asymptotic behaviour of particles on level $q_n$ is relevant to free probability theory.
Saddle point analysis is used to identify the set in which these particles behave
asymptotically like a determinantal random point field with the Sine kernel.
\end{abstract}

\maketitle

\section{Introduction}
\label{secI}

The spectrum of projections of random Hermitian matrices is an important object of study,
both in free probability and in random matrix theory. 
For each $n \in \N$, let $\H_n \subset \C^{n \times n}$ be the set of $n \times n$
Hermitian matrices, and let $A_n \in \H_n$ be a random matrix whose distribution is
unitarily invariant. For each $r \in \{1,\ldots,n\}$, let $\pi_r \in \C^{n \times n}$
be the diagonal projection of rank $r$ with the diagonal $(1,1,\ldots,1,0,0,\ldots,0)$.
Fix $q_n \in \{1,\ldots,n\}$, and let $n \to \infty$ under the assumption that
$\frac{q_n}n \to \a \in (0,1)$ and the empirical eigenvalue distribution of $A_n$
converges weakly to a compactly supported probability measure, $\mu$. The asymptotic behaviour
of the non-trivial eigenvalues of $\pi_{q_n} A_n \pi_{q_n}$ is of interest. In free probability,
the asymptotic behaviour can be used to study the free additive
convolution semi-group of $\mu$ (see Section \ref{secfp} for a brief introduction, and
Nica and Speicher, \cite{Nica06}, for a more comprehensive reference). In this paper
we identify the set in which the eigenvalues behave asymptotically like a
determinantal random point field with the Sine kernel.

The non-trivial eigenvalues of projections can be considered as particles in a random
interlaced system. For each $r \in \{1,\ldots,n\}$, let $\CC_r := \{ (y_1^{(r)}, \cdots,
y_r^{(r)}) \in \R^r : y_1^{(r)} > \cdots > y_r^{(r)} \}$, and $\l^{(r)} :=
(\l_1^{(r)},\cdots,\l_r^{(r)}) \in \D_r$ be the non-trivial eigenvalues of $\pi_r A_n \pi_r$.
Theorem 4.3.15 of Horn and Johnson, \cite{Horn90}, then gives
\begin{equation}
\label{eqSyInt}
\l_1^{(r+1)} \; \ge \; \l_1^{(r)} \; \ge \; \l_2^{(r+1)} \; \ge \; \l_2^{(r)}
\; \ge \cdots \ge \; \l_r^{(r)} \; \ge \; \l_{r+1}^{(r+1)},
\end{equation}
for all $r \in \{1,\ldots,n-1\}$. We write $\l^{(r+1)} \succeq \l^{(r)}$ for all $r$, and
say that the eigenvalues are {\em symmetrically interlaced}. Thus $(\l^{(1)},\ldots,\l^{(n)})
\in \overline{\GT_n}$ where
\begin{equation}
\label{eqGTnS}
\overline{\GT_n} := \left\{ (y^{(1)},\ldots,y^{(n)}) \in \D_1 \times \cdots \times \D_n :
y^{(n)} \succeq y^{(n-1)} \succeq \cdots \succeq y^{(1)} \right\}.
\end{equation}
This is referred to as the set of {\em standard Gelfand-Tsetlin patterns of depth $n$}. Figure
\ref{figGelfandTsetlin} gives an example of such a pattern.

\begin{figure*}
\centerline{\mbox{\includegraphics[width=2in]{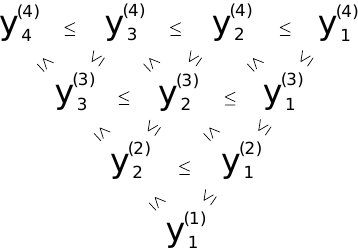}}}
\caption{A Gelfand-Tsetlin pattern, $(y^{(1)},y^{(2)},y^{(3)},y^{(4)})$, of depth $4$.}
\label{figGelfandTsetlin}
\end{figure*}

The interlaced $n$-tuple $(\l^{(1)},\ldots,\l^{(n)}) \in \overline{\GT_n}$ is referred
to as the {\em eigenvalue minor process} of $A_n$. Letting $\mu_n$ be the distribution
of $\l^{(n)} \in \D_n$ (i.e. the eigenvalue distribution of $A_n$), and 
assuming that $\mu_n$ is supported on $\CC_n$, it follows from
Baryshnikov, \cite{Bar01}, that $(\l^{(1)},\ldots,\l^{(n)})$ has distribution
\begin{equation}
\label{eqEigMinProBary}
d \nu_n [y^{(1)},\ldots,y^{(n)}]
= \begin{cases}
\prod_{i < j} \left( \frac{j - i}{y_i^{(n)} - y_j^{(n)}} \right)
d \mu_n [y^{(n)}] dy^{(n-1)} \ldots dy^{(1)} & ; y^{(n)} \in \CC_n, \\
0 & ; \mbox{otherwise},
\end{cases}
\end{equation}
for all $(y^{(1)},\ldots,y^{(n)}) \in \overline{\GT_n}$, where
$d y^{(r)}$ is Lebesgue measure on $\R^r$ for each $r$.
%
%Recall that
%$$
%d \nu_n [y^{(1)},\ldots,y^{(n)}]
%:= \frac{d \mu_n [y^{(n)}] dy^{(n-1)} \ldots dy^{(1)}}
%{\text{Vol} \left( \overline{\GT_n} (y^{(n)}) \right)},
%$$
%where $\overline{\GT_n} (y^{(n)}) \subset \overline{\GT_n}$ is given by
%$$
%\overline{\GT_n} (y^{(n)}) := \{ (x^{(1)},\ldots,x^{(n)}) \in \overline{\GT_n} : x^{(n)} = y^{(n)} \},
%$$
%and $\text{Vol} \left( \overline{\GT_n} (y^{(n)}) \right)$ is the Lebesgue volume,
%$$
%\text{Vol} \left( \overline{\GT_n} (y^{(n)}) \right)
%:= \int \delta_{y^{(n)}} (x^{(n)}) \; 1_{\overline{\GT_n}} (x^{(1)},\ldots,x^{(n)})
%\; dx^{(1)} \ldots dx^{(n)}
%= \prod_{1 \le i < j \le n} \left( \frac{y_i^{(n)} - y_j^{(n)}}{j - i} \right).
%$$
%
In the language of Baryshnikov, $\nu_n$ is the {\em uniform lift} of $\mu_n$ to
$\overline{\GT_n}$.

In Section \ref{secDSOGTP} we consider the case where $\nu_n$ can be written in the form
\begin{equation}
\label{eqGTnMeas}
d\nu_n[y^{(1)},\ldots,y^{(n)}] := \frac1{Z_n}
\det \left[ \phi_i (y_j^{(n)}) \right]_{i,j=1}^n dy^{(n)} dy^{(n-1)} \ldots dy^{(1)},
\end{equation}
for all $(y^{(1)},\ldots,y^{(n)}) \in \overline{\GT_n}$, where $\phi_1,\ldots,\phi_n
: \R \to \R$, and $Z_n > 0$ is a normalisation constant. Assuming integrability conditions
on $\phi_1,\ldots,\phi_n$, we prove that $(\overline{\GT_n},\nu_n)$ is a determinantal
random point field and calculate the correlation kernel (see Section \ref{secDRPF} for
an introduction to determinantal random point fields). Perhaps the best studied example of
such distributions is the eigenvalue minor process of the Gaussian unitary ensemble (GUE),
which we discuss in more detail in Section \ref{secTEOTMOTGUE}. In this case, as we shall
see, $\phi_i (y) = H_{n-i} (y) e^{- \frac12 y^2}$ for all $i \in \{1,\ldots,n\}$ and
$y \in \R$, where $H_i : \R \to \R$ is the Hermite polynomial of degree $i$.

Fixing $a,b \in \R$ with $a < b$, and $x^{(n)} \in \CC_n \cap [a,b]^n$ for all $n \in \N$,
consider the case where $\phi_i = \d_{x_i^{(n)}}$ for all $i \in \{1,\ldots,n\}$.
Then, the measure in equation (\ref{eqGTnMeas}) is the distribution of the eigenvalue
minor process of $U_n B_n U_n^\ast$, where $B_n \in \H_n$ is a fixed Hermitian matrix with
eigenvalues $x^{(n)}$, and $U_n \in \C^{n \times n}$ is a random Unitary matrix chosen
according to Haar measure. We are interested in the behaviour of $\l^{(q_n)}$ in the above
asymptotic limit (i.e. $n \to \infty$ under the assumption that $\frac{q_n}n \to \a \in (0,1)$
and the empirical distribution of $x^{(n)}$ converges weakly to $\mu$).

In Section \ref{secfp} we recall known results about the global asymptotic behaviour of
$\l^{(q_n)}$. It follows from the interlacing constraint that the empirical distribution
of $\l^{(q_n)}$ is supported on $[a,b]$. As we shall see, the expectation of the empirical
distribution converges weakly to a measure $\mu_\a$ on $[a,b]$ in the above asymptotic limit.
An expression for $\mu_\a$ in terms of the free additive convolution semi-group of $\mu$
follows from the work of Voiculescu, \cite{Voi98}, and a Lebesgue decomposition of $\mu_\a$
can be characterised from the work of Belinschi, \cite{Bel03}, \cite{Bel06}.

In this paper we consider the local asymptotic behaviour of $\l^{(q_n)}$. The main result
of this paper, described in detail in Section \ref{secR}, can be summarised as follows:
\begin{thm}
\label{thI}
For each $n \in \N$, let $K_n : \R^2 \to \C$ be the correlation kernel associated with $\l^{(q_n)}$.
Then for all $c \in (a,b)$ contained in that subset of the support of $\mu_\a$ on which $\mu_\a$
is absolutely continuous with respect to Lebesgue measure,
$$
\lim_{n \to \infty} \frac1{n \rho_{\a} (c)}
K_n \left( c + \frac{u}{n \rho_{\a} (c)}, c + \frac{v}{n \rho_{\a} (c)} \right)
= \frac{\sin(\pi (v - u))}{\pi (v - u)},
$$
where $\rho_\a(c)$ is the density of $\mu_\a$ at the point $c$.
\end{thm}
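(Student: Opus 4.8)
The plan is to combine the explicit correlation kernel from Section~\ref{secDSOGTP} with a steepest descent analysis of a double contour integral.

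\emph{Step 1: integral representation.} Specialising the determinantal structure of Section~\ref{secDSOGTP} to $\phi_i = \d_{x_i^{(n)}}$, I would first express $K_n$ as a double contour integral of the form
$$
K_n(u,v) = \frac{1}{(2\pi i)^2}\oint_{\G_z}\oint_{\g_w}
\frac{1}{z-w}\;\frac{\prod_{k=1}^n(w - x_k^{(n)})}{\prod_{k=1}^n(z - x_k^{(n)})}\;
\frac{(z-v)^{n-q_n-1}}{(w-u)^{n-q_n}}\;dw\,dz
$$
(up to an explicit combinatorial factor, and possibly a simple additive term, neither of which affects the argument), where $\G_z$ encircles $x_1^{(n)},\ldots,x_n^{(n)}$ and $\g_w$ is a small loop around $u$. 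Writing $u = c + \tfrac{\hat u}{n\rho_\a(c)}$ and $v = c + \tfrac{\hat v}{n\rho_\a(c)}$ and extracting the exponentially large part, the integrand becomes $\exp\!\big(n(f_n(z) - f_n(w))\big)$ times sub-exponential factors, where
$$
f_n(\zeta) := \frac{n-q_n}{n}\,\log(\zeta - c)\;-\;\frac1n\sum_{k=1}^n\log(\zeta - x_k^{(n)}).
$$
Thus $f_n \to f$ with $f'(\zeta) = \frac{1-\a}{\zeta - c} - G_\mu(\zeta)$, where $G_\mu$ denotes the Stieltjes transform of $\mu$; since $\mu$ need not be absolutely continuous, it is cleaner to keep $n$ finite throughout and work with the empirical transform $G_{\mu_n}$, passing to the limit only at the end.

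\emph{Step 2: saddle points and the density.} The critical equation $f'(\zeta)=0$, i.e.\ $G_\mu(\zeta) = \frac{1-\a}{\zeta-c}$, is a reformulation of the relation characterising $\mu_\a$ recalled in Section~\ref{secfp} (a subordination equation in the sense of Voiculescu). I would show that for $c$ in the absolutely continuous part of the support of $\mu_\a$ this equation has a conjugate pair of non-real roots $z_c,\bar z_c$, staying bounded away from $\R$, and that
$$
\rho_\a(c) = \frac1\pi\big|\Im G_\mu(z_c)\big| = \frac1\pi\Big|\Im\tfrac{1-\a}{z_c - c}\Big|.
$$
This identity is exactly what makes the scaling work: the linear corrections produced by replacing $c$ with $u$ and $v$ contribute, at the two saddles, phase factors $e^{\pm i\pi\hat u}$ and $e^{\mp i\pi\hat v}$ (up to an overall conjugation $K_n(u,v)\mapsto \tfrac{g(u)}{g(v)}K_n(u,v)$ that does not affect correlations).

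\emph{Step 3: steepest descent and the Sine kernel.} Next I would deform $\G_z$ and $\g_w$ to pass through $z_c$ and $\bar z_c$ along, respectively, a contour of steepest descent for $e^{nf}$ and one for $e^{-nf}$, checking that this can be done without crossing any $x_k^{(n)}$ or $u$, and that $\Re(f(z)-f(w))$ attains its leading-order maximum only at the saddle pair, so that everything away from $z_c,\bar z_c$ is exponentially negligible. Tracking the residue at $z=w$ picked up in the deformation together with the Gaussian contribution of the saddles, and inserting the phase factors of Step~2, the rescaled kernel collapses onto a one-dimensional integral of the shape $\frac1{2\pi i}\int_{\bar z_c}^{z_c}e^{(\text{linear in }\hat u-\hat v)}\,d(\cdot)$, which evaluates to $\frac{\sin(\pi(\hat v-\hat u))}{\pi(\hat v-\hat u)}$; the prefactor $\frac1{n\rho_\a(c)}$ in the statement is precisely the Jacobian of $u\mapsto\hat u$. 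The additive term from Step~1, if present, is disposed of by a direct estimate.

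\emph{Main obstacle.} The crux is Step~3: producing admissible steepest descent contours and establishing the required global sign of $\Re(f(z)-f(z_c))$ along them, with enough uniformity in $n$ to close the error estimates. This is a harmonic-function / potential-theory argument, made delicate by two features. First, $\mu$ is an arbitrary compactly supported measure, so $G_\mu$ need not continue analytically across its support; one must instead thread the $w$-contour between the poles $x_k^{(n)}$ of $G_{\mu_n}$ at finite $n$ and take the limit afterwards. Second, the contour topology must not be allowed to degenerate, which is exactly where the hypothesis that $c$ lies in the absolutely continuous part of the support of $\mu_\a$ — so that $z_c\notin\R$ and stays bounded away from it — is essential. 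Finally, matching the saddle-point density $\frac1\pi|\Im G_\mu(z_c)|$ with the free-probabilistic $\rho_\a$ of Section~\ref{secfp} is a separate, more algebraic verification.
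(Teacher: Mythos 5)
Your proposal follows essentially the same route as the paper's own proof: your Step 1 double contour integral is (up to exact exponents and contour bookkeeping) Proposition \ref{prIntExpKl} specialised to $\phi_i=\delta_{x_i^{(n)}}$, your saddle equation $G_\mu(\zeta)=\frac{1-\alpha}{\zeta-c}$ is the paper's $w\,G_\mu(w+c)=1-\alpha$ handled at finite $n$ in Lemma \ref{lemcp}, the identification $\rho_\alpha(c)=\frac1\pi\left|\Im G_\mu(z_c)\right|$ is exactly Proposition \ref{prAa} via the subordination relation, and the global contour estimates you single out as the main obstacle are carried out (with explicit near-saddle contours rather than literal steepest-descent paths, uniformly in $n$) in Lemmas \ref{lemBdRem}, \ref{lemfn1fn2} and \ref{lempn2}. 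So the plan is correct and essentially identical to the paper's argument, including the conjugation factor you note, which appears explicitly as $(C_{\alpha,c})^{v-u}$ in the precise statement, Theorem \ref{thSine}.
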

The limiting correlation kernel given above is referred to as the {\em Sine} kernel. This
has been observed asymptotically in the spectrum of other ensembles of random matrices and
in related systems (see, for example, \cite{Erdos2010a}, \cite{Joh01}, \cite{Pas97}).
Thus locally, as long as we avoid points where the non-trivial eigenvalues
accumulate (i.e. atoms of $\mu_\a$), the eigenvalues are asymptotically distributed as a
determinantal random point field with the Sine kernel. The strength of the above Theorem
is that the asymptotic behaviour can be observed without needing specific information
about $\mu$. This is a generalisation of Collins, \cite{Col05}, who took $B_n$ to be a
projection of rank $\tilde{q}_n$ with $\frac{\tilde{q}_n}n \to \b \in (0,1)$ as
$n \to \infty$. In this case $\mu = (1 - \b) \d_0 + \b \d_1$. This result is recovered
in Section \ref{secExTwoAtoms}.

Random systems with no obvious connection to random matrices sometimes give rise to related
measures. Examples include the bead model (see Boutillier, \cite{Bout06}), random tilings
(see, for example, \cite{Fer03}, \cite{Joh06a}, \cite{OkoRes06}) and
polynuclear growth (see Johansson, \cite{Joh03}). These models have subtle connections. For
example Johansson and Nordenstam, \cite{Joh06a}, \cite{Nord09}, consider random tilings of
a hexagon with lozenges. Lozenges are shown to interlace, and, in the large hexagon limit,
lozenges close to the boundary behave asymptotically like the eigenvalue minor process of
the GUE.

The paper is structured as follows: Sections \ref{secDRPF} and \ref{secTEOTMOTGUE} motivate
this topic by giving an introduction to determinantal random point fields, and by
discussing the GUE case in greater detail. Section \ref{secfp} recalls the known results
regarding the global behaviour of $\l^{(q_n)}$ in the above asymptotic limit. The
main result is stated in Section \ref{secR}. Section \ref{secEx} considers special cases of
the measure $\mu$.

Section \ref{secDSOGTP} contains the initial results on the determinantal structure
of the space $(\overline{\GT_n},\nu_n)$ when $\nu_n$ can be written in the form
given in equation (\ref{eqGTnMeas}). We also calculate the correlation kernel. Though the
main result of this section, Theorem \ref{thmGTDet}, follows from the more general results
of Defosseux, \cite{Man08}, we give a simplified account. We
obtain useful contour integral expressions for the correlation kernel in Proposition
\ref{prIntExpKl}.

Section \ref{secPOTMR} contains a proof of the main result, Theorem \ref{thSine}.
The asymptotic behaviour of the correlation kernel is obtained by performing a saddle point
analysis on the contour integral expression for the kernel given in Proposition
\ref{prIntExpKl}. Finally, in Section \ref{secUIE}
we consider the case where the measure on the Gelfand-Tsetlin patterns is induced by the
eigenvalue minor process of a Unitary invariant ensemble. In Section \ref{secTCE}
we specialise to classical ensembles that satisfy a Rodrigues formula. We recover
the correlation kernel of the eigenvalue minor process of the GUE obtained by Johansson and
Nordenstam, \cite{Joh06a} (see equation (\ref{eqJnGUE})).

\subsection{Determinantal random point fields}
\label{secDRPF}

The following is a brief introduction to determinantal random point fields. For a more
complete treatment see Johansson, \cite{Joh06b}, and Soshnikov, \cite{Sosh00}.

Let $E$ be a Polish space. Fix $N \in \N \cup \{\infty\}$, and let $\O \subset E^N$ be a space
of configurations of $N$-particles of $E$. The case $N = \infty$ gives countable configurations.
Denote each $\o \in \O$ by $(\o_1,\ldots,\o_N)$. We allow for multiple points, i.e.,
$\o_i = \o_j$ for $i \neq j$.

Given $\o \in \O$, and a Borel set $B \subset E$, define $N_B(\o) := \# \{i : \o_i \in B \}$,
the number of particles from $\o$ contained in $B$. We call $\o$ {\em locally finite} if
$N_K(\o)$ is finite for every compact set $K \subset E$. Assume $\O$ consists entirely of
locally finite configurations. Given $m \leq N$, define $C_B^m \subset \O$ by
$C_B^m := \{ \o \in \O : N_B(\o) = m \}$. This is called a \emph{cylinder set}. Let $\F$ be the
$\s$-algebra generated by the cylinder sets.

\begin{definition}
\label{defRanPoiFie}
A random point field is a triplet $(\O,\F,\P)$, where $\P$ is a probability measure on $(\O,\F)$.
\end{definition}

Let $(\O,\F,\P)$ be a random point field. For each $m \leq N$ define a measure,
$\MM_m$, on $E^m$ by
\begin{equation}
\label{eqMMm}
\MM_m[B] := \E \left[ \sum_{1 \le i_1 \ne \cdots \ne i_m \le N}
1_{\{\o \in \O : (\o_{i_1},\ldots,\o_{i_m}) \in B\}} \right],
\end{equation}
for any Borel subset $B \subset E^m$. We assume that $\MM_m$ is well-defined for all $m$,
and $\MM_m[B] < \infty$ whenever $B$ is bounded. For each $m \leq N$, and each Borel subset
$B \subset E^m$, $\MM_m[B]$ is the expected number of $m$-tuples of particles from $\O$
that are contained in $B$. Also, for all $m \leq N$, and all disjoint bounded Borel sets
$B_1,\ldots,B_m \subset E$,
$$
\MM_m [B_1 \times \cdots \times B_m] = \E \left[ \prod_{k=1}^m N_{B_k} \right].
$$

Letting $\mu$ be a reference measure on $E$, for example Lebesgue on $\R$, we make the following definition:
\begin{definition}
\label{defCorFnt}
For any $m \le N$, the Radon-Nikodym derivative of $\MM_m$ with respect to $\mu^m$ (if it exists)
is referred to as the $m^\text{th}$ correlation function of the random point field. That is, the
$m^\text{th}$ correlation function is the integrable function $\rho_m : E^m \rightarrow \R$ which
satisfies
$$
\MM_m [B] = \int_B \rho_m(y_1,\ldots,y_m) d\mu^m[y],
$$
for all Borel subsets $B \subset E^m$.
\end{definition}
This property is useful, for example, when calculating \emph{last particle distributions}.
That is, the distribution of the rightmost particle of random point fields over $\R$.
See Johansson, \cite{Joh06b}, for more details.

\begin{definition}
\label{defDetPointPro}
A random point field is called determinantal if all correlation functions
exist and there exists a function $K : E^2 \rightarrow \C$ for which
$$
\rho_m(y_1,\ldots,y_m) = \det[K(y_i,y_j)]_{i,j=1}^m,
$$
for all $y_1,\ldots,y_m \in E$ and $m \leq N$. $K$ is called the correlation kernel of the field.
\end{definition}

\begin{rem}
When $\F$ and $\mu$ are `obvious' they are not usually mentioned.
For example when $E \subset \R$, $\F$
is the Borel sigma-algebra and $\mu$ is Lebesgue measure. When
$E \subset \Z \times \R$, $\F = \{ A \times B : A \subset \Z \mbox{ and }
B \subset \R \mbox{ is Borel } \}$ and $\mu$ is the direct product of the counting
measure and Lebesgue measure.
\end{rem}

\begin{rem}
\label{remCorKer}
Correlation kernels are not necessarily unique. For example when $E \subset \R$, another correlation
kernel $J : E^2 \rightarrow \C$ can be defined
by $J(u,v) := \frac{w(u)}{w(v)} K(u,v)$ for all $u,v \in \R$, where $w$ is any non-zero
complex function. 
\end{rem}

\subsection{The eigenvalue minor process of the GUE}
\label{secTEOTMOTGUE}

The GUE is the probability measure on $\H_n$ given by
$$
d\xi_n^\GUE[H] := \frac1{Z_n} e^{-\frac12 \text{Tr} H^2} dH,
$$
where $Z_n>0$ is a normalisation constant, and $dH$ is the Lebesgue measure
$$
dH := \left( \prod_{i=1}^n d(H_{ii}) \right) \left( \prod_{j<k} d(\Re H_{jk}) d(\Im H_{jk}) \right).
$$
A typical matrix chosen according to the GUE has diagonal elements given by independent standard
Gaussians, and the real and imaginary part of the non-diagonal elements given by independent
Gaussians with variance $\frac12$.

Let $(\l^{(1)},\ldots,\l^{(n)}) \in \overline{\GT_n}$ be the eigenvalue minor process of
the GUE, as discussed in Section \ref{secI}. The distribution of
$\l^{(n)} \in \D_n$ (i.e. the distribution of the eigenvalues of the GUE) is
given by (see for example Mehta, \cite{Mehta04})
\begin{equation}
\label{eqfnGUE}
d \mu_n^\GUE [y] = \frac1{Z_n'} \Delta_n(y)^2 \left( \prod_{i=1}^n e^{-\frac12 y_i^2} \right) dy,
\end{equation}
for all $y \in \D_n$, where $Z_n' > 0$ is a normalisation constant, $dy$ is Lebesgue measure
on $\R^n$, and $\Delta_n : \R^n \to \R$ is the {\em Vandermonde} determinant
\begin{equation}
\label{eqVanDet}
\Delta_n(y)
\; := \; \prod_{1 \le i < j \le n} (y_i - y_j)
\; = \; \det \left[ y_i^{n-j} \right]_{i,j=1}^n.
\end{equation}
Equation (\ref{eqEigMinProBary}) thus implies that $(\l^{(1)},\ldots,\l^{(n)})$
has distribution
$$
d \nu_n^\GUE [y^{(1)},\ldots,y^{(n)}] = \frac1{Z_n''} \Delta_n(y^{(n)})
\left( \prod_{i=1}^n e^{-\frac12  (y_i^{(n)})^2} \right) \; dy^{(n)} dy^{(n-1)} \ldots dy^{(1)},
$$
for all $(y^{(1)},\ldots,y^{(n)}) \in \overline{\GT_n}$, where $Z_n'' > 0$ is
a normalisation constant, and $dy^{(r)}$ is Lebesgue measure on $\R^r$ for each $r$.

Definition \ref{defRanPoiFie} implies that $(\D_n,\mu_n^\GUE)$ is a random
point field on $\R$. Let $\{H_i\}_{i\ge 0}$ be the sequence of monic Hermite polynomials,
i.e., for each $i,j \ge 0$, $H_i$ and $H_j$ have degree $i$ and $j$ respectively and satisfy
$$
\int_{-\infty}^\infty H_i(y) H_j(y) e^{-\frac12 y^2} dy = \sqrt{2\pi i! j!} \; \delta_{ij}.
$$
Equations (\ref{eqfnGUE}) and (\ref{eqVanDet}) then give
$$
d\mu_n^\GUE[y] = \frac1{Z_n'}
\left( \det \left[ H_{n-j} (y_i) e^{-\frac14 y_i^2} \right]_{i,j=1}^n \right)^2 dy,
$$
for all $y \in \D_n$.
%
%To see this note equation (\ref{eqVanDet}) gives
%$$
%\Delta_n(y)
%= \det \left[ y_i^{n-j} \right]_{i,j=1}^n
%= \det \left[ H_{n-j}(y_i) \right]_{i,j=1}^n,
%$$
%since we use monic polynomials.
%
Proposition 2.11 of Johansson, \cite{Joh06b}, then shows that this field is determinantal
with correlation kernel $K_n^\GUE : \R^2 \to \R$ given by
\begin{equation}
\label{eqKnGUE}
K_n^\GUE(u,v)
= \sum_{i=0}^{n-1} \frac1{\sqrt{2\pi} \; i!} H_i (u) H_i (v) e^{-\frac14 (u^2 + v^2)},
\end{equation}
for all $u,v \in \R$.
%
%To see this note, taking $\nu_n^\GUE$ to be a measure on $\R^n$ by permutating,
%$$
%d\nu_n^\GUE [y] = \frac1{n! Z_n}
%\det \left[ H_{n-j} (y_i) e^{-\frac14 y_i^2} \right]_{i,j=1}^n
%\det \left[ H_{n-j} (y_i) e^{-\frac14 y_i^2} \right]_{i,j=1}^n dy_1 \ldots dy_n,
%$$
%for all $y \in \R^n$, where $Z_n$ is the normalisation constant given by
%$$
%Z_n
%= \frac1{n!} \int_{\R^n}
%\det \left[ H_{n-j} (y_i) e^{-\frac14 y_i^2} \right]_{i,j=1}^n
%\det \left[ H_{n-j} (y_i) e^{-\frac14 y_i^2} \right]_{i,j=1}^n dy_1 \ldots dy_n \\
%=^{\text{CB}} \det A,
%$$
%and $A \in \R^{n \times n}$ is given by
%$$
%A_{ij}
%:= \int_\R H_{n-i} (y) H_{n-j} (y) e^{-\frac14 y^2}
%= \sqrt{2\pi (n-i)! (n-j)!} \; \delta_{ij}.
%$$
%Proposition 2.11 of Johansson \cite{Joh06b} gives
%\begin{eqnarray*}
%K_n^\GUE(u,v)
%& = & \sum_{i,j=1}^n H_{n-i} (u) e^{-\frac14 u^2}
%\left( A^{-1} \right)_{ij} H_{n-j} (v) e^{-\frac14 v^2} \\
%& = & \sum_{i,j=1}^n H_{n-i} (u) e^{-\frac14 u^2}
%\frac{\delta_{ij}}{\sqrt{2\pi (n-i)! (n-j)!}} H_{n-j} (v) e^{-\frac14 v^2} \\
%& = & \sum_{i=1}^n \frac1{\sqrt{2\pi} \; (n-i)!} H_{n-i} (u) H_{n-i} (v)
%e^{-\frac14 (u^2 + v^2)}.
%\end{eqnarray*}
%

More recently Johansson and Nordenstam, \cite{Joh06a}, showed a determinantal structure
for $(\overline{\GT_n},\nu_n^\GUE)$.
For simplicity of notation identify $\overline{\GT_n}$ with a space of
configurations of $\frac12 n(n+1)$ particles on $\{1,\ldots,n\} \times \R$ using the natural map from
$\overline{\GT_n}$ to $(\{1,\ldots,n\} \times \R)^{\frac12 n(n+1)}$ given by
$$
(y^{(1)},\ldots,y^{(n)}) \mapsto
\left( (1,y_1^{(1)}),(2,y_1^{(2)}),(2,y_2^{(2)}),
(3,y_1^{(3)}),(3,y_2^{(3)}),(3,y_3^{(3)}),\ldots \right),
$$
for all $(y^{(1)},\ldots,y^{(n)}) \in \overline{\GT_n}$. In words, the first
particle of each configuration is contained in $\{1\} \times \R$, the next $2$ particles are
contained in $\{2\} \times \R$, next $3$ in $\{3\} \times \R$ etc.
Definition \ref{defRanPoiFie} thus implies that $(\overline{\GT_n},\nu_n^\GUE)$
is a random point field on $\{1,\ldots,n\} \times \R$. Johansson and Nordenstam, \cite{Joh06a},
show that this field is determinantal with correlation kernel
$J_n^\GUE : (\{1,\ldots,n\} \times \R)^2 \to \R$ given by
\begin{eqnarray}
\label{eqJnGUE}
\lefteqn{\hspace{1.5cm} J_n^\GUE((r,u),(s,v))
\; = \; \sum_{i=-\infty}^{-1} \frac1{\sqrt{2\pi}(i+s)!} H_{i+r}(u) H_{i+s}(v) e^{-\frac14 (u^2+v^2)}} \\
\nonumber
& + & 1_{s>r} e^{\frac14 (u^2-v^2)} \left( \sum_{i=-s}^{-r-1} \frac{H_{i+s} (v)}{\sqrt{2\pi}(i+s)!}
\int_u^\infty dx \frac{(x-u)^{-i-r-1}}{(-i-r-1)!} e^{- \frac12 x^2}
- \frac{(v-u)^{s-r-1}}{(s-r-1)!} 1_{v>u} \right),
\end{eqnarray}
for all $r,s \in \{1,\ldots,n\}$ and $u,v \in \R$. Similar correlation kernels have been
obtained for the eigenvalue minor processes of Jacobi and Laguerre ensembles (see equation
(4.15) of Forrester and Nagao, \cite{Forr11}). Section \ref{secTCE} provides an
alternative method for calculating these kernels.

As a final note we would like to point out some interesting asymptotics that are of relevance
to our problem. For more information, see Anderson, Guionnet and Zeitouni, \cite{And10}:
\begin{thm}
\label{thmGUEEmpEigAsy}
Let $\mu_\text{sc}$ be the semicircle distribution, i.e., the distribution on $\R$
with density $\rho_\text{sc} : \R \to \R$ given by 
\begin{equation}
\label{eqSemiCir}
\rho_\text{sc} (c) := \frac1{2\pi} \sqrt{4-c^2} 1_{|c| \le 2},
\end{equation}
for all $c \in \R$. Then as $n \to \infty$,
$$
\frac1n \sum_{j=1}^n \delta_{\l_j^{(n)}/\sqrt{n}} \to \mu_\text{sc}
\hspace{0.5cm} \mbox{almost surely,}
$$
in the sense of weak convergence of measures.
\end{thm}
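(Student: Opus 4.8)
The plan is to use the moment method together with a variance estimate, as in Anderson, Guionnet and Zeitouni, \cite{And10}. Write $L_n := \frac1n \sum_{j=1}^n \delta_{\l_j^{(n)}/\sqrt n}$ for the rescaled empirical spectral measure; since $L_n$ is the spectral measure of the Hermitian matrix $A_n/\sqrt n$ (where $A_n$ is the GUE matrix of Section \ref{secTEOTMOTGUE} with $\l^{(n)}$ its eigenvalues), its moments are normalised traces: for each $k \in \N$,
$$
\int_\R x^k \, dL_n(x) = \frac1{n^{1+k/2}} \, \text{Tr}(A_n^k) = \frac1{n^{1+k/2}} \sum_{i_1,\ldots,i_k = 1}^n (A_n)_{i_1 i_2} (A_n)_{i_2 i_3} \cdots (A_n)_{i_k i_1}.
$$
First I would compute the expectation. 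The entries of $A_n$ on and above the diagonal are independent centred Gaussians, and the Hermitian constraint gives $(A_n)_{ij} = \overline{(A_n)_{ji}}$, so the Gaussian moment formula writes $\E[\text{Tr}(A_n^k)]$ as a sum over pairings of the $k$ steps of the closed walk $i_1 \to i_2 \to \cdots \to i_k \to i_1$, a pairing contributing only when the two paired steps traverse the same unordered pair of indices in opposite directions.

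Each such pairing identifies the walk with a connected multigraph, and a standard index count (a connected graph on $V$ vertices has at least $V-1$ edges, with equality exactly for trees) shows that the only pairings contributing at the leading order $n^{1+k/2}$ are those for which $k$ is even and the quotient graph is a plane tree with $k/2$ edges; these are in bijection with the $C_{k/2}$ non-crossing pair partitions of $\{1,\ldots,k\}$, where $C_\ell$ is the $\ell^\text{th}$ Catalan number. Hence
$$
\lim_{n \to \infty} \E \left[ \int_\R x^k \, dL_n(x) \right] = \begin{cases} C_{k/2} & ; \ k \text{ even}, \\ 0 & ; \ k \text{ odd}. \end{cases}
$$
These are precisely the moments of $\mu_\text{sc}$ (one checks $\int_{-2}^2 x^{2\ell}\rho_\text{sc}(x)\,dx = C_\ell$ for all $\ell \ge 0$), and since $\mu_\text{sc}$ has bounded support it is the unique probability measure with these moments; thus $\E[L_n] \to \mu_\text{sc}$ weakly.

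To upgrade this to almost sure convergence, I would fix $k$ and bound $\mathrm{Var}\big(\int_\R x^k\,dL_n\big) = O(n^{-2})$ using the same combinatorial scheme applied to $\E[(\text{Tr}\,A_n^k)^2] - (\E\,\text{Tr}\,A_n^k)^2$: in this difference only pairings that link the two closed walks survive, and such graphs have strictly fewer distinct indices, hence are of lower order. Chebyshev's inequality and the Borel--Cantelli lemma then give $\int_\R x^k\,dL_n \to C_{k/2}$ (resp.\ $0$) almost surely; intersecting over the countably many $k \in \N$ gives a single event of full probability on which all moments of $L_n$ converge to those of $\mu_\text{sc}$, and since the limit is compactly supported this forces $L_n \to \mu_\text{sc}$ weakly, almost surely.

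The main obstacle is the combinatorial core: verifying rigorously that non-tree pairings are genuinely lower order and that tree pairings give exactly $C_{k/2}$, which is the classical but delicate index-counting argument sketched above; the variance estimate is a routine extension of it. As an aside, in the spirit of the rest of this paper one can avoid the moment method altogether: by Definition \ref{defCorFnt} and equation (\ref{eqKnGUE}), the density of $\E[L_n]$ is $x \mapsto \frac1{\sqrt n}K_n^\GUE(\sqrt n\,x,\sqrt n\,x)$, so Plancherel--Rotach asymptotics for the Hermite polynomials (together with the Christoffel--Darboux formula) give pointwise convergence of this density to $\rho_\text{sc}$, while the determinantal variance bound $\mathrm{Var}(N_B) \le \E[N_B]$ again promotes this to almost sure weak convergence.
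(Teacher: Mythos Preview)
Your proposal is correct and is the standard moment-method proof, essentially the one given in Anderson, Guionnet and Zeitouni, \cite{And10}. Note, however, that the paper does not prove this theorem at all: it is stated without proof as a known result and the reader is referred to \cite{And10} for details, so there is no ``paper's own proof'' to compare against --- your sketch is precisely the argument the cited reference carries out.
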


\begin{thm}
\label{thmGUECorKerAsy}
For any $c \in (-2,2)$, and any sequence $\{c_n\}_{n\ge 1} \subset \R$ with
$\frac{c_n}{\sqrt{n}} \to c$,
$$
\lim_{n \to \infty} \frac1{\rho_\text{sc} (c) \sqrt{n}}
K_n^{\GUE} \left( c_n + \frac{u}{\rho_\text{sc} (c) \sqrt{n}},
c_n + \frac{v}{\rho_\text{sc} (c) \sqrt{n}} \right) = \frac{\sin(\pi(v-u))}{\pi(v-u)},
$$
for all $u,v \in \R$.
\end{thm}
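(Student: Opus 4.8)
The plan is to prove Theorem~\ref{thmGUECorKerAsy} by exploiting the Christoffel--Darboux structure of the kernel $K_n^\GUE$ in equation (\ref{eqKnGUE}) together with the classical Plancherel--Rotach asymptotics for Hermite polynomials in the bulk. First I would rescale: set $\psi_i(y) := (\sqrt{2\pi}\, i!)^{-1/2} H_i(y) e^{-\frac14 y^2}$, so that $\{\psi_i\}_{i\ge 0}$ is an orthonormal family in $L^2(\R)$ and $K_n^\GUE(u,v) = \sum_{i=0}^{n-1}\psi_i(u)\psi_i(v)$. The three-term recurrence for monic Hermite polynomials, $H_{i+1}(y) = y H_i(y) - i H_{i-1}(y)$, yields a Christoffel--Darboux identity
$$
K_n^\GUE(u,v) = \sqrt{n}\,\frac{\psi_n(u)\psi_{n-1}(v) - \psi_{n-1}(u)\psi_n(v)}{u - v},
$$
valid for $u\neq v$, with the usual confluent form for $u = v$. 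This reduces the problem to the asymptotics of the two functions $\psi_n$ and $\psi_{n-1}$ near the scaled point $c_n \approx c\sqrt n$.

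Next I would invoke the Plancherel--Rotach asymptotics (see, e.g., Anderson--Guionnet--Zeitouni, \cite{And10}, or Mehta, \cite{Mehta04}): for $c \in (-2,2)$ and $y = c\sqrt n$ one has, writing $\theta = \theta(c)$ for the appropriate phase with $\cos\theta = c/2$,
$$
\psi_n(c\sqrt n) \sim \left(\frac{2}{\pi\sqrt n \sin\theta}\right)^{1/2}\!\cos\!\big(n\,\Phi(c) + \gamma_n(c)\big)
$$
for suitable $\Phi$ and a slowly varying phase $\gamma_n$, with an analogous formula for $\psi_{n-1}$ whose phase differs from that of $\psi_n$ by an amount that, after the local rescaling $y = c_n + t/(\rho_\text{sc}(c)\sqrt n)$, contributes the linear term $\pi t$ that generates the Sine kernel. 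Substituting the two asymptotic expansions into the Christoffel--Darboux formula, expanding the phases to first order in the microscopic variable (the $\sqrt n$ in front of the difference quotient and the $1/(\rho_\text{sc}(c)\sqrt n)$ from $u - v$ combine to an $O(1)$ prefactor because $\rho_\text{sc}(c) = \frac{1}{2\pi}\sqrt{4-c^2} = \frac{\sin\theta}{\pi}$), the rapidly oscillating cross terms $\cos\cdot\cos$ average to the product-to-sum identity and, after the cancellations dictated by the antisymmetric numerator, one is left with $\frac{\sin(\pi(v-u))}{\pi(v-u)}$.

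The main obstacle is controlling the error terms uniformly enough to justify the termwise substitution of the Plancherel--Rotach expansions and the linearisation of the phases on the microscopic scale: one needs the asymptotics not just at a single point but uniformly for arguments in a shrinking window of size $O(1/\sqrt n)$ around $c_n$, and one must check that the non-oscillating remainder (the $\gamma_n$ phase drift and the amplitude corrections) is genuinely $o(1)$ after multiplication by the $\sqrt n$ prefactor in the Christoffel--Darboux kernel. I would handle this by using the known uniform (in compact subsets of $(-2,2)$) form of the Plancherel--Rotach asymptotics with explicit $O(n^{-1})$ error control, noting that a window of width $O(1/\sqrt n)$ changes the large phase $n\Phi$ by $O(\sqrt n)\cdot O(1/\sqrt n) = O(1)$ — precisely the bounded quantity we want — while changing amplitudes by $O(1/\sqrt n)$, which is negligible. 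The remaining steps — the trigonometric bookkeeping to pass from $\cos A\cos B - \cos A'\cos B'$ over $u-v$ to $\sin(\pi(v-u))/(\pi(v-u))$, and the separate confluent computation at $u=v$ giving the value $1$ — are routine and I would not grind through them here.
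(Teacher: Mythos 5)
The paper itself does not prove Theorem \ref{thmGUECorKerAsy}: it is stated as a classical fact with a pointer to Anderson, Guionnet and Zeitouni \cite{And10}, so there is no internal argument to compare against beyond that citation. Your outline — the Christoffel--Darboux reduction of the kernel in (\ref{eqKnGUE}) to $\psi_n,\psi_{n-1}$ (the $\sqrt{n}$ prefactor is right for the normalisation $\int H_iH_je^{-y^2/2}=\sqrt{2\pi}\,i!\,\delta_{ij}$), followed by bulk Plancherel--Rotach asymptotics uniform on $O(n^{-1/2})$ windows, with the scaling bookkeeping $\rho_{\text{sc}}(c)=\sin\theta/\pi$ and phase increment $\pi t$ per microscopic step — is precisely the standard proof given in that reference (and in Mehta \cite{Mehta04}), and it is correct as an outline, the only deferred content being the uniform error control and trigonometric cancellation that those sources carry out in detail.
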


For each $r \in \{1,\ldots,n\}$, $J_n^\GUE((r,\cdot),(r,\cdot)) : \R^2 \to \R$
is the correlation kernel for the particles on level $r$ of the interlaced pattern (i.e. the
eigenvalues of the sub-matrix of size $r$). Equations (\ref{eqKnGUE})
and (\ref{eqJnGUE}) give $J_n^\GUE((r,\cdot),(r,\cdot)) = K_r^\GUE$, and so
the particles on level $r$ are distributed as the eigenvalues of a randomly chosen
GUE matrix of size $r$. Therefore, properly rescaled, the particles in the bulk on each level
of the interlaced pattern behave asymptotically like a determinantal random point
field with the Sine kernel.

Related systems of interlaced particles often display similar asymptotic behaviour.
For example Boutillier, \cite{Bout06}, studies the {\em bead model}, a probability measure
on systems of interlaced particles on $\Z \times \R$. The particles on each thread (i.e.
on $\{r\} \times \R$ for each $r$) form a determinantal random point field with the Sine
kernel. In Metcalfe, O'Connell and Warren, \cite{Met09}, a circular analogue of this model
is constructed.

\subsection{Global asymptotic behaviour of the eigenvalues of random projections}
\label{secfp}

Fix $a,b \in \R$ with $a<b$. For each
$n \in \N$, fix $q_n \in \{1,\ldots,n\}$, $x^{(n)} \in \CC_n \cap [a,b]^n$ and
$B_n \in \H_n$ with eigenvalues $x^{(n)}$. Let $U_n \in \C^{n \times n}$ be a random
Unitary matrix chosen according to Haar measure, and let $(\l^{(1)},\ldots,\l^{(n)})
\in \overline{\GT_n}$ be the eigenvalue minor process of $U_n B_n U_n^\ast$, as
discussed in Section \ref{secI}. In this section we recall known results about the
behaviour of the empirical distribution of $\l^{(q_n)}$ under the following asymptotic
limit:
\begin{hyp}
\label{hypWeakConv}
Let $\mu$ be a probability measure on $\R$ which is not a point mass and with support,
$\supp (\mu) \subset [a,b]$. Assume that, as $n \to \infty$,
$$
\frac1n \sum_{i=1}^n \delta_{x_i^{(n)}}
\rightarrow \mu,
$$
in the sense of weak convergence of measures. Also assume that there exists an $\a \in (0,1)$
for which $\frac{q_n}n \to \a$ as $n \to \infty$.
\end{hyp}

It follows from the interlacing constraint (see equation (\ref{eqSyInt})) that
$\l^{(q_n)} \in [a,b]^{q_n}$. Let $\MM_1^{(n)}$ be the measure on $[a,b]$ of size $q_n$ for which,
for any $B \subset [a,b]$ measurable, $\MM_1^{(n)}[B]$ is the expected number of eigenvalues from
$\{\l_1^{(q_n)}, \ldots, \l_{q_n}^{(q_n)} \}$ that are contained in $B$ (see equation
(\ref{eqMMm})). The following is a consequence of Voiculescu, \cite{Voi98}.
For more information see Xu, \cite{Xu97}, and Collins, \cite{Col03a}, \cite{Col03b}, \cite{Col05}:
\begin{lem}
\label{lemVoiWeakCon}
Assuming hypothesis \ref{hypWeakConv}, $\frac1n \MM_1^{(n)}$ converges weakly to $\mu_\a$,
the measure on $[a,b]$ of size $\a$ given by
$$
(1 - \a) \d_0 + \mu_\a = \left( (1 - \a) \d_0 + \a \d_1 \right) \boxtimes \mu,
$$
where $\boxtimes$ represents free multiplicative convolution.
\end{lem}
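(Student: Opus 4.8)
The plan is to identify the empirical distribution of $\l^{(q_n)}$ with the spectral distribution of a compression of $B_n$, and then to pass to the limit using the standard dictionary between compressions by Haar-random projections and free multiplicative convolution. First I would observe that, conditionally on $U_n$, the non-trivial eigenvalues $\l^{(q_n)}$ of $\pi_{q_n} U_n B_n U_n^\ast \pi_{q_n}$ are precisely the eigenvalues of the top-left $q_n \times q_n$ corner of $U_n B_n U_n^\ast$. Since $U_n$ is Haar-distributed, conjugating by a fixed permutation matrix does not change the law, so this corner has the same distribution as $P_n U_n B_n U_n^\ast P_n$ for \emph{any} rank-$q_n$ coordinate projection $P_n$; equivalently, it is the compression of $B_n$ by a Haar-random rank-$q_n$ orthogonal projection. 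Thus $\frac1{q_n}\MM_1^{(n)}$ is the expected spectral distribution of this random compression, and $\frac1n\MM_1^{(n)} = \frac{q_n}{n}\cdot\frac1{q_n}\MM_1^{(n)}$ has total mass $\frac{q_n}{n}\to\a$.

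Next I would invoke the asymptotic freeness results of Voiculescu for unitarily invariant ensembles: under Hypothesis \ref{hypWeakConv}, the pair $(B_n, P_n)$ — with $B_n$ having limiting distribution $\mu$ and $P_n$ a Haar-rotated projection of rank $\sim\a n$ having limiting distribution $(1-\a)\d_0+\a\d_1$ — is asymptotically free, and moreover the convergence holds in expectation (convergence of expected moments), which is what $\MM_1^{(n)}$ records. The spectral distribution of the compressed operator $P_n B_n P_n$, restricted to the range of $P_n$ and normalised to a probability measure, then converges to the distribution one obtains from the free compression. By the definition of free multiplicative convolution and the standard identity relating compressions to $\boxtimes$ (see Voiculescu, \cite{Voi98}, and the references to Xu, \cite{Xu97}, and Collins, \cite{Col03a}, \cite{Col03b}, \cite{Col05}), the distribution of $p x p$ in the compressed algebra, scaled back by the factor $\a$ to a sub-probability measure on $\R$ of mass $\a$, is exactly the measure $\mu_\a$ characterised by $(1-\a)\d_0 + \mu_\a = \big((1-\a)\d_0 + \a\d_1\big)\boxtimes\mu$. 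Re-inserting the mass $(1-\a)$ at $0$ on both sides accounts for the trivial (zero) eigenvalues of the projection that were discarded when passing to $\l^{(q_n)}$.

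To make the weak-convergence statement precise I would argue at the level of moments: $\int t^k \, d\big(\tfrac1n\MM_1^{(n)}\big)(t) = \frac{q_n}{n}\cdot\frac1{q_n}\E\big[\mathrm{Tr}\,(P_n U_n B_n U_n^\ast P_n)^k\big]$, which by asymptotic freeness in expectation converges to $\a\cdot\tau\big((pxp)^k\big) = \a\cdot\big(\text{free-compression moment}\big)$; since all measures involved are supported in the fixed compact interval $[a,b]$ (for $\MM_1^{(n)}$ this is the interlacing bound $\l^{(q_n)}\in[a,b]^{q_n}$, and $\mu_\a$ inherits compact support from $\mu$), convergence of all moments upgrades to weak convergence, and the limiting moments determine $\mu_\a$ uniquely. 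The main obstacle is the passage from the algebraic/moment-wise statement of asymptotic freeness to the precise bookkeeping of masses and supports: one must be careful that $\MM_1^{(n)}$ is an \emph{expectation} (so only asymptotic freeness in the sense of expected moments is needed, not almost-sure freeness), that the rank-$q_n$ projection contributes mass $q_n/n$ rather than $1$, and that the displayed identity in the lemma is stated for measures of mass $1$ on the left (after adding back the atom at $0$) so that $\boxtimes$ is literally free multiplicative convolution of probability measures; checking that these conventions are consistent with Voiculescu's formulation is the delicate part, but it is essentially a matter of carefully tracking normalisations rather than proving anything genuinely new.
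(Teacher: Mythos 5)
Your proposal is correct, and it reconstructs exactly the argument the paper relies on: the paper gives no proof of Lemma \ref{lemVoiWeakCon}, stating it as a consequence of Voiculescu's asymptotic freeness result \cite{Voi98} (with \cite{Xu97}, \cite{Col03a}, \cite{Col03b}, \cite{Col05} for details), and your route — identifying $\frac1n\MM_1^{(n)}$ (plus the $\frac{n-q_n}{n}\d_0$ of discarded zero eigenvalues) with the expected spectral distribution of a Haar-random rank-$q_n$ compression of $B_n$, invoking asymptotic freeness in expectation, and upgrading moment convergence to weak convergence via the uniform compact support $[a,b]$ — is precisely that standard derivation, with the mass and normalisation bookkeeping handled correctly.
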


For more information on free multiplicative convolution see Nica and Speicher,
\cite{Nica06}, lecture $14$. Exercise $14.21$ of this book gives an alternative
expression for $\mu_\a$:
\begin{equation}
\label{eqMulCon0}
\mu_\a = \a \; D_\a ( \mu^{\boxplus \a^{-1}} ),
\end{equation}
where $D_\a$ is the dilation operator that satisfies $D_\a \delta_a := \delta_{\a a}$
for all $a \in \R$, $\boxplus$ represents free additive convolution, and
$\{ \mu^{\boxplus t} \}_{t \ge 1}$ is the free additive convolution semi-group of $\mu$
(i.e. $\mu^{\boxplus 1} = \mu$, $\mu^{\boxplus (s + t)}
= (\mu^{\boxplus s}) \boxplus (\mu^{\boxplus t})$ for all $s,t \ge 1$, and the
mapping $t \mapsto \mu^{\boxplus t}$ is continuous with respect to the $\mbox{weak}^\ast$
topology on probability measures).

A Lebesgue decomposition of $\mu_\a$ follows from Belinschi, \cite{Bel03} (Theorem $4.1$),
\cite{Bel06} (Theorem $1.36$):
\begin{lem}
\label{lemLebDec}
$\mu_\a = \mu_\a^\text{at} + \mu_\a^\text{ac} + \mu_\a^\text{sc}$ where
\begin{enumerate}
\item
$\mu_\a^\text{at}$ is an atomic measure with support $\supp (\mu_\a^\text{at})
= \{c \in [a,b] : \mu[\{c\}] > 1 - \a\}$. Moreover $\mu_\a^\text{at} [\{c\}]
= \mu[\{c\}] - (1 - \a)$ for all $c \in \supp (\mu_\a^\text{at})$.
\item
$\mu_\a^\text{ac}$ is a non-zero measure which is absolutely continuous with
respect to Lebesgue measure, and its density is analytic outside a closed set of
Lebesgue measure zero.
\item
$\mu_\a^\text{sc}$ is singular continuous with respect to Lebesgue measure. Moreover the
support of $\mu_\a^\text{sc}$ has zero Lebesgue measure, and is included in
the support of $\mu_\a^\text{ac}$.
\end{enumerate}
\end{lem}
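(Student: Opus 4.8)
The plan is to read off the entire decomposition from the representation $\mu_\a = \a\, D_\a(\mu^{\boxplus \a^{-1}})$ in equation (\ref{eqMulCon0}), combined with the known Lebesgue decomposition of the measures in a free additive convolution semi-group. Since $\a \in (0,1)$ the exponent $\a^{-1}$ is strictly greater than $1$, so $\sigma := \mu^{\boxplus \a^{-1}}$ is a genuine free convolution power and the regularising results of Belinschi, \cite{Bel03} (Theorem 4.1) and \cite{Bel06} (Theorem 1.36), apply. First I would quote these: because $\mu$ is not a point mass, $\sigma$ decomposes as $\sigma = \sigma^{\text{at}} + \sigma^{\text{ac}} + \sigma^{\text{sc}}$, where $\sigma^{\text{ac}}$ is non-zero with density analytic outside a closed Lebesgue-null set, $\sigma^{\text{sc}}$ is singular continuous and carried by a Lebesgue-null set contained in $\supp(\sigma^{\text{ac}})$, and the atoms of $\sigma$ are exactly the points $\a^{-1}c$ for which $c$ is an atom of $\mu$ with $\mu[\{c\}] > 1 - \a$, with mass $\sigma[\{\a^{-1}c\}] = \a^{-1}\mu[\{c\}] - (\a^{-1}-1)$. (The location $\a^{-1}c$ and the mass follow from the $\delta_0$-atom case via the identity $\mu^{\boxplus t} = \delta_{tc}\boxplus(T_{-c}\mu)^{\boxplus t}$, where $T_{-c}$ translates by $-c$.)

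Next I would transport this decomposition through the map $\nu \mapsto \a\, D_\a(\nu)$. The dilation $D_\a$ is the push-forward under the linear homeomorphism $x \mapsto \a x$, and multiplication by the scalar $\a > 0$ just rescales the total mass; neither operation disturbs a Lebesgue decomposition, since push-forward under an invertible affine map carries absolutely continuous measures to absolutely continuous measures, singular continuous to singular continuous and atomic to atomic, sends Lebesgue-null sets to Lebesgue-null sets, and preserves analyticity of a density off a closed null set (the new density is $\a^{-1}f(\a^{-1}x)$ up to the overall factor $\a$, analytic off the dilated bad set). Setting $\mu_\a^{\text{at}} := \a\, D_\a(\sigma^{\text{at}})$, $\mu_\a^{\text{ac}} := \a\, D_\a(\sigma^{\text{ac}})$ and $\mu_\a^{\text{sc}} := \a\, D_\a(\sigma^{\text{sc}})$ then yields $\mu_\a = \mu_\a^{\text{at}} + \mu_\a^{\text{ac}} + \mu_\a^{\text{sc}}$, and items (2) and (3) are immediate from the corresponding statements for $\sigma$. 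For item (1), the atom of $\sigma$ at $\a^{-1}c$ is moved by $D_\a$ to the point $\a\cdot\a^{-1}c = c$ with unchanged mass, and the factor $\a$ then multiplies the mass by $\a$, so $\mu_\a[\{c\}] = \a\bigl(\a^{-1}\mu[\{c\}] - (\a^{-1}-1)\bigr) = \mu[\{c\}] - (1-\a)$ and $\supp(\mu_\a^{\text{at}}) = \{c \in [a,b] : \mu[\{c\}] > 1-\a\}$, as required.

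The only points that need care are the non-vanishing of $\mu_\a^{\text{ac}}$ — which is exactly where the hypothesis that $\mu$ is not a point mass enters, via Belinschi's regularity theorem for $\mu^{\boxplus t}$ with $t > 1$ — and the bookkeeping that turns the atom data of $\mu^{\boxplus\a^{-1}}$ into that of $\mu_\a$, which is the elementary computation above. I do not expect any substantial obstacle beyond isolating and citing the precise forms of the results in \cite{Bel03}, \cite{Bel06} (together with the companion atom description of Belinschi and Bercovici), since the measure-theoretic transport through $\a\, D_\a$ is routine.
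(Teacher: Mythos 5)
Your proposal is correct in substance, and it is worth noting that the paper itself offers no argument here: Lemma \ref{lemLebDec} is stated as a direct consequence of Belinschi's results, with the citations (\cite{Bel03}, Theorem 4.1, on atoms of free \emph{multiplicative} convolutions, and \cite{Bel06}, Theorem 1.36) pointing to the representation $(1-\a)\d_0 + \mu_\a = ((1-\a)\d_0 + \a\d_1)\boxtimes\mu$ of Lemma \ref{lemVoiWeakCon}. You instead route everything through the additive-semigroup identity $\mu_\a = \a\, D_\a(\mu^{\boxplus\a^{-1}})$ of equation (\ref{eqMulCon0}), quoting the regularity theory for $\mu^{\boxplus t}$, $t>1$, and then transporting the Lebesgue decomposition through the affine map $\a D_\a$; the atom bookkeeping $\mu_\a[\{c\}] = \a\bigl(\a^{-1}\mu[\{c\}]-(\a^{-1}-1)\bigr) = \mu[\{c\}]-(1-\a)$ and the translation trick $\mu^{\boxplus t}=\delta_{tc}\boxplus(T_{-c}\mu)^{\boxplus t}$ are both correct. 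The two routes are equivalent in spirit (both ultimately rest on Belinschi's regularization theorems, and the paper records both representations), so the only genuine difference is which black box you open: the $\boxtimes$ result with the two-atom measure, or the $\boxplus$-power result plus a dilation. One point to fix is the attribution: the statements you need for $\mu^{\boxplus t}$ with $t>1$ --- that the atoms are exactly the points $tc$ with $\mu[\{c\}]>1-t^{-1}$ and mass $t\mu[\{c\}]-(t-1)$, that the absolutely continuous part is non-zero (for $\mu$ not a point mass) with density analytic off a closed null set, and the control of the singular part --- are the Belinschi--Bercovici theorems for the partially defined semigroup $\{\mu^{\boxplus t}\}_{t\ge1}$, not literally Theorem 4.1 of \cite{Bel03}, which concerns $\boxtimes$; you gesture at this with the ``companion atom description'' remark, but the citation should be made precise. (In fact the known result for $t>1$ is stronger than what the lemma asserts, namely that the singular continuous part of $\mu^{\boxplus t}$ vanishes, so your argument proves the lemma with room to spare.) Otherwise the measure-theoretic transport through $\a D_\a$ is, as you say, routine, and I see no gap.
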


\subsection{Statement of the main result}
\label{secR}

Fix $a,b \in \R$ with $a<b$. For each
$n \in \N$, fix $q_n \in \{1,\ldots,n\}$, $x^{(n)} \in \CC_n \cap [a,b]^n$ and
$B_n \in \H_n$ with eigenvalues $x^{(n)}$. Let $U_n \in \C^{n \times n}$ be a random
Unitary matrix chosen according to Haar measure, and let $(\l^{(1)},\ldots,\l^{(n)})
\in \overline{\GT_n}$ be the eigenvalue minor process of $U_n B_n U_n^\ast$. Assume hypothesis
\ref{hypWeakConv}. In this section we consider the local asymptotic behaviour of $\l^{(q_n)}$
as $n \to \infty$.

Equation (\ref{eqEigMinProBary}) implies that $(\l^{(1)},\ldots,\l^{(n)})$ has distribution
\begin{equation}
\label{eqUniMeasGTn}
d\nu_n[y^{(1)},\ldots,y^{(n)}] := \frac1{Z_n}
\delta_{x^{(n)}} (y^{(n)}) dy^{(n)} dy^{(n-1)} \ldots dy^{(1)},
\end{equation}
for all $(y^{(1)},\ldots,y^{(n)}) \in \overline{\GT_n}$, where $Z_n>0$ is a normalisation
constant and $dy^{(r)}$ is Lebesgue measure on $\R^r$ for each $r$. As in the GUE
case (see Section \ref{secTEOTMOTGUE}), we identify $\overline{\GT_n}$ with a space of
configurations of $\frac12 n(n+1)$ particles on $\{1,\ldots,n\} \times [a,b]$. Note, we
restrict our attention to $[a,b]$ since $x^{(n)} \in [a,b]^n$, and so
$\l^{(r)} \in [a,b]^r$ (for each $r$) by the interlacing constraint (see equation (\ref{eqSyInt})).
Definition \ref{defRanPoiFie} implies that $(\overline{\GT_n},\nu_n)$ is a random point field
on $\{1,\ldots,n\} \times [a,b]$. Theorem \ref{thmGTDet} and remark \ref{remphiEx}
show that this field is determinantal with correlation kernel
$K_n : (\{1,\ldots,n\} \times [a,b])^2 \to \C$ given by
\begin{eqnarray}
\label{eqKnrusvFixTopLine}
K_n ((r,u),(s,v))
& = & \sum_{j=1}^n 1_{v \le u < x_j^{(n)}} \frac{(x_j^{(n)} - u)^{n-r-1}}{(n-r-1)!}
\frac{\partial^{n-s}}{\partial v^{n-s}}
\prod_{i \neq j} \left( \frac{v - x_i^{(n)}}{x_j^{(n)} - x_i^{(n)}} \right) \\
\nonumber
& - & \sum_{j=1}^n 1_{v > u > x_j^{(n)}} \frac{(x_j^{(n)} - u)^{n-r-1}}{(n-r-1)!}
\frac{\partial^{n-s}}{\partial v^{n-s}}
\prod_{i \neq j} \left( \frac{v - x_i^{(n)}}{x_j^{(n)} - x_i^{(n)}} \right),
\end{eqnarray}
for all $r \in \{1,\ldots,n-1\}$, $s \in \{1,\ldots,n\}$ and $u,v \in [a,b]$.

For each $n \in \N$, $K_n((q_n,\cdot),(q_n,\cdot)) : [a,b]^2 \to \C$ is the
correlation kernel for $\l^{(q_n)}$, or equivalently the particles on level $q_n$ of the
Gelfand-Tsetlin pattern chosen according to the measure $\nu_n$. We wish to establish
a natural subset of $(a,b)$ under which this kernel behaves asymptotically like
the Sine kernel as $n \to \infty$. We define
\begin{equation}
\label{eqAa}
A_\a := \{ c \in (a,b) : \exists \; w \in \C \setminus \R \mbox{ with } w G_\mu (w + c) = 1 - \a \},
\end{equation}
where $G_\mu : \C \setminus \R \to \C$ is the Cauchy transform of $\mu$ (also known
as the Stieltjes transform) given by
\begin{equation}
\label{eqCauTra}
G_\mu (w) := \int_{-\infty}^\infty \frac1{w - x} \mu[dx],
\end{equation}
for all $w \in \C \setminus \R$. Proposition \ref{prAa} gives a natural interpretation of $A_\a$.

The main result (shown in section \ref{secPOTMR}) can now be stated as follows:
\begin{thm}
\label{thSine}
Assume hypothesis \ref{hypWeakConv}. Then, given $c \in A_\a$, there exists a $w_{\a,c} \in \C$
with $\Im(w_{\a,c}) > 0$ and
$$
\{ w \in \C \setminus \R : w G_\mu (w + c) = 1 - \a \} = \{ w_{\a,c}, \overline{w_{\a,c}} \}.
$$
Moreover for all $c \in A_\a$, and compact sets $U,V \subset \R$,
$$
\lim_{n \to \infty} \sup_{u \in U, v \in V} \left| \frac{(C_{\a,c})^{v-u}}{n \rho_{\a} (c)}
K_n \left( \left( q_n, c + \frac{u}{n \rho_{\a} (c)} \right),
\left( q_n, c + \frac{v}{n \rho_{\a} (c)} \right) \right)
- \frac{\sin(\pi (v - u))}{\pi (v - u)} \right| = 0,
$$
where $\rho_{\a} (c) := - \frac{1-\a}{\pi} \Im \left( w_{\a,c}^{-1} \right)$ and
$C_{\a,c} := \exp \left( \pi \; \frac{ \Re (w_{\a,c}^{-1}) }{ \Im (w_{\a,c}^{-1}) } \right)$.
\end{thm}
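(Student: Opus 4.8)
The plan is to start from the contour integral expression for $K_n((q_n,\cdot),(q_n,\cdot))$ promised in Proposition \ref{prIntExpKl} and run a saddle point analysis. The kernel $K_n((r,u),(s,v))$ in equation (\ref{eqKnrusvFixTopLine}) should, after the manipulations of Section \ref{secDSOGTP}, be expressible as a double contour integral of the form
$$
K_n((q_n,u),(q_n,v)) = \frac1{(2\pi i)^2} \oint \oint \frac{e^{n(f_n(z) - f_n(w))}}{z - w} \, \frac{(\text{rational prefactor})}{} \, dz\, dw + (\text{explicit correction term}),
$$
where $f_n(z) = \frac1n \sum_{i=1}^n \log(z - x_i^{(n)}) - \a \log z$ (up to sign conventions), together with a term of the form $1_{v \le u}$ times a single integral that is the analogue of the $(v-u)^{s-r-1}/(s-r-1)!\,1_{v>u}$ term in (\ref{eqJnGUE}). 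Under Hypothesis \ref{hypWeakConv} we have $f_n \to f$ uniformly on compacta of $\C \setminus [a,b]$, where $f(z) = \int \log(z-x)\,\mu[dx] - \a \log z$, so $f'(z) = G_\mu^{\text{shift}}(z) - \a/z$; the saddle point equation $f'(w) = 0$ rearranges exactly to $w G_\mu(w) = 1-\a$ after the substitution $w \mapsto w + c$ (this is where the set $A_\a$ and equation (\ref{eqAa}) enter). Evaluating the kernel at $u = c + \frac{u}{n\rho_\a(c)}$, $v = c + \frac{v}{n\rho_\a(c)}$ introduces the linear-in-$u,v$ term that, at the saddle, produces the oscillatory factor; the conjugate pair of saddles $w_{\a,c}, \overline{w_{\a,c}}$ gives the two exponentials whose difference is $\sin(\pi(v-u))$.

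First I would establish the dichotomy in the first sentence of the theorem: for $c \in A_\a$ the equation $wG_\mu(w+c) = 1-\a$ has exactly two non-real roots, complex conjugates of each other. Writing $h(w) := w G_\mu(w+c) - (1-\a)$, one checks that $\Im(w G_\mu(w+c))$ has a definite sign in each of the upper and lower half planes away from the real axis — this follows because $G_\mu$ maps the upper half plane to the lower half plane, so $\Im(w G_\mu(w+c))$ is a sum/integral of terms $\Im\!\big(w/(w+c-x)\big)$ which, for fixed sign of $\Im w$, can be controlled — giving at most one root in each half plane; existence of at least one is the defining property of $A_\a$, and the conjugate symmetry of $G_\mu$ ($\overline{G_\mu(\bar w)} = G_\mu(w)$ after reflecting $\mu$, or more precisely $G_\mu(\bar w) = \overline{G_\mu(w)}$ since $\mu$ is real) forces the second root to be the conjugate. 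Label the upper one $w_{\a,c}$.

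Next comes the core saddle point argument. I would deform the $z$- and $w$-contours to pass through the saddles $w_{\a,c} + c$ and $\overline{w_{\a,c}} + c$ along steepest-descent/ascent directions, checking that the deformation is legitimate (no poles crossed, or poles crossed contribute to the explicit correction term exactly cancelling the $1_{v\le u}$ piece — this bookkeeping is the fiddly part). On the local scale $1/(n\rho_\a(c))$, $f_n(z) - f_n(w)$ contributes a bounded phase plus a Gaussian decay $e^{-n c_0 |z - \text{saddle}|^2}$, so the integral localises; Taylor expansion of $f_n$ to second order and Laplace's method give, up to the prefactors, a contribution $\frac{1}{2\pi i (v-u)}\big( e^{i\pi(v-u)} \cdot (\text{stuff}) - e^{-i\pi(v-u)}\cdot(\text{stuff})\big)$ from the two saddles. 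The constants $\rho_\a(c) = -\frac{1-\a}{\pi}\Im(w_{\a,c}^{-1})$ and $C_{\a,c} = \exp\!\big(\pi \Re(w_{\a,c}^{-1})/\Im(w_{\a,c}^{-1})\big)$ are precisely what is needed to normalise the Gaussian width to $1$ and to absorb the non-oscillatory real part of the phase into the conjugating factor $(C_{\a,c})^{v-u}$ (recall from remark \ref{remCorKer} that multiplying the kernel by $w(u)/w(v)$ leaves the point field unchanged, so such a factor is cosmetically removable). Matching everything, the limit is $\sin(\pi(v-u))/(\pi(v-u))$, uniformly for $u \in U$, $v \in V$ compact, since all error estimates are uniform on compacta.

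The main obstacle I anticipate is twofold: first, justifying the contour deformation globally — controlling $\Re(f_n(z) - f_n(w))$ away from the saddles to get genuine exponential suppression of the tails requires understanding the topography of $\Re f_n$ on all of $\C \setminus [a,b]$, including near the cut $[a,b]$ where the integrand has its singularities, and this is sensitive to the (merely weak) convergence $\frac1n\sum \delta_{x_i^{(n)}} \to \mu$, so one must pass from discrete sums to the limiting integral with care, perhaps smoothing or using that $f_n' \to f'$ uniformly on compacta bounded away from $[a,b]$. Second, the combinatorial matching of the correction term: the integral formula in Proposition \ref{prIntExpKl} comes with an indicator-type term (the last summand in (\ref{eqKnrusvFixTopLine}) after rescaling), and showing it is asymptotically negligible — or, on the contrary, that it exactly supplies a missing residue when a contour is pushed across a pole — is the kind of exact cancellation that is easy to get wrong by a sign. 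Everything else (the second-order Taylor expansion, the Laplace asymptotics, identifying the constants) is routine once the topography and the bookkeeping are under control.
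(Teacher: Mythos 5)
Your overall strategy is the one the paper actually follows: take the double contour integral representation of $K_n((q_n,\cdot),(q_n,\cdot))$ from Proposition \ref{prIntExpKl}, write the integrand as a prefactor times $e^{n(h_{n,v}(w)-h_{n,v}(z))}$ with $h_{n,v}(w)=\int\log(w+c-x)\,\mu_{n,v}[dx]-\frac{n-q_n}{n}\log w$, locate the conjugate saddles solving $wG_\mu(w+c)=1-\a$, deform to steepest-descent type contours, and extract the sine kernel from the two saddle contributions, with $\rho_\a(c)$ and $C_{\a,c}$ coming out of $(1-\a)\,w_{\a,c}^{-1}$. So the asymptotic part of your plan matches the paper in outline, and the difficulties you flag (global topography of $\Re h_{n,v}$ near the cut under merely weak convergence, and the bookkeeping of the indicator/residue term) are exactly where the paper spends its effort (Lemmas \ref{lemBdRem}, \ref{lemfn1fn2}, \ref{lempn2}, and the case-dependent choice of $\G_n$ in Proposition \ref{prIntExpKl}).

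There is, however, a genuine gap in your treatment of the first assertion of the theorem, namely that for $c\in A_\a$ the non-real solutions of $wG_\mu(w+c)=1-\a$ form exactly one conjugate pair. Your argument --- that $\Im\big(wG_\mu(w+c)\big)$ has a definite sign in each half plane, ``giving at most one root in each half plane'' --- is self-defeating: if the imaginary part never vanished off the real axis there would be \emph{no} root with $\Im w>0$, contradicting the very definition of $A_\a$ (and indeed for $w=iy$ with $y$ large one has $wG_\mu(w+c)=1+(m_1-c)/w+O(w^{-2})$, whose imaginary part changes sign with $c$, so no such definite sign holds). The paper proves uniqueness differently and this step is not cosmetic: in Lemma \ref{lemcp} the finite-$n$ saddle equation $h_{n,v}'(w)=0$ is cleared of denominators into a degree-$n$ polynomial with real coefficients which is shown to have at least $n-2$ real roots, hence at most one root in the upper half plane; uniform convergence $h_{n,v}^{(j)}\to h^{(j)}$ on a neighbourhood of a root $w_0$ of $h'$ plus Rouch\'e then transfers ``at most one, simple'' to the limit, simultaneously giving $h''(w_0)\neq 0$ and the convergence $w_{n,v}\to w_0$ uniformly in $v$. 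Note that the nondegeneracy $h''(w_{\a,c})\neq 0$, which your Laplace/Gaussian localisation silently assumes, is obtained from this same argument; without a correct proof of the ``exactly one conjugate pair, simple'' statement both the first claim of the theorem and the quadratic saddle expansion underlying the limit are unsupported.
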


Natural interpretations exist for $A_\a$ and $\rho_\a : A_\a \to (0,\infty)$. Let $\mu_\a$
be the measure on $[a,b]$ of size $\a$ given in Lemma \ref{lemVoiWeakCon},
and let $\mu_\a^\text{at}$ be its atomic part (see Lemma \ref{lemLebDec}).
Then, letting $\supp$ represent support and ${}^\circ$ represent interior:
\begin{prop}
\label{prAa}
Assume hypothesis \ref{hypWeakConv}. Then $A_\a$ is open,
$A_\a \cap \supp (\mu_\a^\text{at}) = \emptyset$, $A_\a \subset \supp(\mu_\a)^\circ$,
and $\supp(\mu_\a)^\circ \setminus A_\a$ has Lebesgue measure zero. Moreover there exists
an open subset of $A_\a$, of equal Lebesgue measure, in which $\mu_\a$ is absolutely continuous
with respect to Lebesgue measure, and $\rho_\a (c)$ is the density of $\mu_\a$ at $c$ for each
$c$ in this set.
\end{prop}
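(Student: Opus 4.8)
The plan is to study $A_\a$ via the auxiliary map
\[ \Psi(z) \; := \; z - \frac{1-\a}{G_\mu(z)}, \qquad z \in \C^+ := \{z \in \C : \Im(z) > 0\}, \]
which is holomorphic on $\C^+$ since $G_\mu$ has no zeros there. Writing $z = w+c$ and using $\overline{G_\mu(\bar z)} = G_\mu(z)$, one sees that a real $c$ lies in $A_\a$ if and only if $\Psi(z) = c$ for some $z \in \C^+$. The key structural fact is that $\Psi$ inverts the subordination function of the free additive convolution semi-group: by equation~(\ref{eqMulCon0}), $\mu_\a = \a\,D_\a(\mu^{\boxplus 1/\a})$, and writing $\omega : \C^+ \to \C^+$ for the subordination function of $\mu^{\boxplus 1/\a}$ (so $G_{\mu^{\boxplus 1/\a}} = G_\mu \circ \omega$ and, by the semi-group property, $\omega(z) = \a z + (1-\a)/G_{\mu^{\boxplus 1/\a}}(z)$), the rescaled function $\tilde\omega(w) := \omega(w/\a)$ satisfies, for $w \in \C^+$,
\[ G_{\mu_\a}(w) = G_\mu(\tilde\omega(w)) \qquad\text{and}\qquad \Psi(\tilde\omega(w)) = w . \]
Hence $\tilde\omega$ maps $\C^+$ biholomorphically onto an open set $\Omega \subseteq \C^+$ with inverse $\Psi|_\Omega$; in particular $\Psi(z) \notin \C^+$ forces $z \notin \Omega$. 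Moreover, by Biane's regularity theory for $\mu^{\boxplus t}$ with $t > 1$ (see also \cite{Bel03}, \cite{Bel06}), $\tilde\omega$ extends continuously to $\R$ with values in $\overline{\C^+}$.

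That $A_\a$ is open I would prove with the argument principle: fixing $c_0 \in A_\a$ with witness $z_0 \in \C^+$ and a small disc $D \subset \C^+$ about $z_0$ on which $z_0$ is the only zero of $\Psi - c_0$, the number of zeros of $\Psi - c$ in $D$ is constant, hence positive, for all $c$ near $c_0$, and any such zero lies in $\C^+$. For $A_\a \cap \supp(\mu_\a^\text{at}) = \emptyset$: if $m := \mu[\{c\}] > 1-\a$, write $G_\mu(z) = \frac{m}{z-c} + G_{\tilde\mu}(z)$ with $\tilde\mu := \mu - m\,\d_c$ a non-zero positive measure (non-zero as $\mu$ is not a point mass); then $w\,G_\mu(w+c) = m + w\,G_{\tilde\mu}(w+c)$ for $w \in \C \setminus \R$, and a short computation of real and imaginary parts (using $\Im G_{\tilde\mu}(w+c) = -\Im(w)\int|w+c-t|^{-2}\tilde\mu[dt]$) shows that when $w\,G_{\tilde\mu}(w+c)$ is real it equals $|w|^2\int|w+c-t|^{-2}\tilde\mu[dt] > 0$, so $w\,G_\mu(w+c) > m > 1-\a$ and $c \notin A_\a$.

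The remaining claims I would derive from the two displayed identities. Since $\Im G_\mu(z) < 0$ for every $z \in \C^+$, and $\tilde\omega(c+iv)$ approaches $\tilde\omega(c+i0)$ from within $\C^+$, the absolutely continuous density of $\mu_\a$ at $c$, which equals $-\tfrac1\pi\Im G_{\mu_\a}(c+i0) = -\tfrac1\pi\Im G_\mu(\tilde\omega(c+i0))$, is strictly positive exactly when $\tilde\omega(c+i0) \in \C^+$ and is $0$ when $\tilde\omega(c+i0) \in \R$ (in the latter case $G_\mu$ must have a real boundary value at $\tilde\omega(c+i0)$, else $\Psi(\tilde\omega(c+iv)) = c+iv$ could not tend to the real number $c$). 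Conversely, for $c \in A_\a$ with witness $z_0 \in \C^+$ one must show $\tilde\omega(c+i0) = z_0$: on the open set where $\tilde\omega(c+i0) \in \C^+$, continuity of $\Psi$ at the interior point $\tilde\omega(c+i0)$ gives $\Psi(\tilde\omega(c+i0)) = c$, and the uniqueness of the $\C^+$-solution of $w\,G_\mu(w+c) = 1-\a$ (established in proving Theorem~\ref{thSine}) identifies it with $z_0$ there; a density/continuity argument, together with Biane's description of $\Omega$, then extends this to all of $A_\a$. Thus $A_\a$ coincides with the set of $c$ at which $\mu_\a$ has strictly positive absolutely continuous density, and, writing $z_0 = c + w_{\a,c}$ so that $G_\mu(z_0) = (1-\a)/w_{\a,c}$, that density is $-\tfrac1\pi\Im G_\mu(z_0) = -\tfrac{1-\a}{\pi}\Im(w_{\a,c}^{-1}) = \rho_\a(c)$. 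A point of positive density lies in $\supp(\mu_\a)$, so $A_\a$ being open gives $A_\a \subseteq \supp(\mu_\a)^\circ$; deleting from $A_\a$ the closed, Lebesgue-null set $\supp(\mu_\a^\text{sc})$ leaves an open full-measure subset of $A_\a$ on which $\mu_\a$ is absolutely continuous with density $\rho_\a$ (no atoms being present, by the second step); and a point of $\supp(\mu_\a)^\circ \setminus A_\a$ has zero absolutely continuous density, hence lies in $\supp(\mu_\a^\text{at}) \cup \supp(\mu_\a^\text{sc})$ or in the zero set of the absolutely continuous density, which, by the real-analyticity of that density off a closed null set (Lemma~\ref{lemLebDec}(2)), is Lebesgue-null.

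The hard part is the two-sided identification $A_\a = \{c : \Im\tilde\omega(c+i0) > 0\}$ — concretely, showing that the upper-half-plane witness of a point of $A_\a$ is the boundary value of $\tilde\omega$, and in particular excluding an interval inside $A_\a$ on which $\mu_\a$ vanishes; this uses Biane's continuity of $\omega$ up to $\R$ for $t > 1$, the uniqueness of the non-real solution of $w\,G_\mu(w+c) = 1-\a$ in $\C^+$, and Biane's explicit description of the domain $\Omega = \tilde\omega(\C^+)$ and of $\supp(\mu^{\boxplus t})$. The only other non-routine inputs are Lemma~\ref{lemVoiWeakCon} (to write $\mu_\a = \a\,D_\a\mu^{\boxplus 1/\a}$) and Belinschi's regularity theorem, Lemma~\ref{lemLebDec}, which provides the analyticity of the absolutely continuous density and the null-ness of $\supp(\mu_\a^\text{sc})$.
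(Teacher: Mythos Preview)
Your arguments for openness of $A_\a$ (Rouch\'e/argument principle) and for $A_\a \cap \supp(\mu_\a^{\mathrm{at}}) = \emptyset$ (real/imaginary parts of $wG_\mu(w+c)$) are essentially the paper's, just run in the contrapositive direction for the atom statement.

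The genuine divergence is in how you establish $A_\a \subset \supp(\mu_\a)^\circ$ and identify $\rho_\a$ with the density. The paper does \emph{not} use subordination theory here. Instead it invokes the main asymptotic result, Theorem~\ref{thSine}, together with Lemma~\ref{lemVoiWeakCon}: for $c \in A_\a$ and small $\d$, one has $\mu_\a[c-\d,c+\d] = \lim_n \int_{c-\d}^{c+\d} \tfrac1n K_n((q_n,y),(q_n,y))\,dy$, and the Sine-kernel limit forces this to equal $\int_{c-\d}^{c+\d}\rho_\a(y)\,dy > 0$. This simultaneously places $c$ in the support and identifies the density on an open full-measure subset via Lemma~\ref{lemLebDec} and equation~(\ref{eqGmuaGmu}). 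Your route via the subordination identity $\Psi \circ \tilde\omega = \mathrm{id}$ and Biane's boundary regularity is a legitimate alternative that is independent of the random-matrix analysis; the paper's route is shorter precisely because Theorem~\ref{thSine} is proved independently of Proposition~\ref{prAa} and can be fed back in.

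The step you flag as the ``hard part'' --- showing that every $c \in A_\a$ has $\tilde\omega(c+i0) \in \C^+$, i.e.\ ruling out a witness $z_0 \in \C^+ \setminus \overline\Omega$ with $\Psi(z_0) = c$ real --- is a real issue in your sketch. Your appeal to ``Biane's explicit description of $\Omega$'' is doing serious work: one needs that $\Psi$ cannot take real values on $\C^+ \setminus \overline\Omega$, which amounts to a monotonicity/sign statement about $\Im\Psi$ below the graph bounding $\Omega$. This is obtainable from Biane's analysis of $\mu^{\boxplus t}$ for $t>1$, but it is not a one-liner, and you have not written it down. The paper sidesteps this difficulty entirely by using the kernel asymptotics, at the cost of a forward reference to Theorem~\ref{thSine}. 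If you want your argument to stand on its own, you should either extract the needed statement from Biane explicitly or give the direct computation showing $\Im\Psi(z) < 0$ for $z \in \C^+$ strictly below $\partial\Omega$.
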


To show this we consider the Cauchy transform and the $\mathcal{R}$-transform of
$\mu_\a$. Letting $\nu$ be a probability measure on $\R$ with compact support, the
$\mathcal{R}$-transform of $\nu$ is the function, $\mathcal{R}_\nu : \C \to \C$,
given by
$$
\mathcal{R}_\nu (w) := \sum_{n \ge 0} \kappa_{n+1} w^n
$$
for all $w \in \C$, where $\{\kappa_n\}_{n \ge 1}$ are the free cumulants of $\nu$
(see Nica and Speicher, \cite{Nica06}, lecture 12, for more information). The following
properties will be of use:
\begin{lem}
\label{lemStielTra}
For any two probability measure $\nu, \xi$ on $\R$ with compact support, and any $s \ge 1$,
we have $\mathcal{R}_{\nu^{\boxplus s}} = s \mathcal{R}_\nu$ and $\mathcal{R}_{\nu \boxplus \xi}
= \mathcal{R}_\nu + \mathcal{R}_\xi$. Moreover, $G_\nu : \C \setminus \R \to \C \setminus \R$
is invertible with inverse
$$
G_\nu^{-1} (w) = \mathcal{R}_\nu(w) + \frac1w,
$$
for all $w \in \C \setminus \R$.
\end{lem}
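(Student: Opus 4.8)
These are standard facts from the combinatorial and analytic theory of the $\mathcal{R}$-transform, and the plan is to reduce each to a short generating-function computation. For (c), I would work near $\infty$ (equivalently near $0$), where everything is a convergent power series. Expanding the geometric series, for $|z|$ larger than the radius of $\supp(\nu)$ one has $G_\nu(z)=\tfrac1z M_\nu(1/z)$ with $M_\nu(u):=\sum_{n\ge0}m_n u^n$ and $m_n:=\int t^n\,\nu[dt]$ (so $m_0=1$); since $zG_\nu(z)=M_\nu(1/z)\to1$, the reciprocal $1/G_\nu$ is $z+O(1)$ near $\infty$, hence univalent there, so $G_\nu$ has an analytic inverse $K_\nu$ near $0$ with $K_\nu(w)-\tfrac1w$ analytic at $0$. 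Writing $C_\nu(u):=\sum_{n\ge0}\kappa_n u^n$ ($\kappa_0:=1$) for the free-cumulant series, the moment--cumulant identity of Nica and Speicher reads $M_\nu(u)=C_\nu\!\big(uM_\nu(u)\big)$. Putting $w:=G_\nu(z)$ and $u:=1/z$, so that $w=uM_\nu(u)$ and $wz=M_\nu(u)$, this becomes $wz=C_\nu(w)$, i.e.
\[
G_\nu^{-1}(w)=z=\frac{C_\nu(w)}{w}=\frac1w+\sum_{n\ge0}\kappa_{n+1}w^n=\mathcal{R}_\nu(w)+\frac1w ,
\]
which is (c) as an identity of analytic functions near $0$; the statement on all of $\C\setminus\R$ is its analytic continuation, discussed below.

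For (a) and (b) I would invoke the linearising role of the free cumulants. By Speicher's characterisation of free independence, free cumulants are additive under free convolution, $\kappa_n(\nu\boxplus\xi)=\kappa_n(\nu)+\kappa_n(\xi)$ for every $n\ge1$; multiplying by $w^{n-1}$ and summing over $n\ge1$ gives $\mathcal{R}_{\nu\boxplus\xi}=\mathcal{R}_\nu+\mathcal{R}_\xi$, which is (b). Iterating (b) yields $\mathcal{R}_{\nu^{\boxplus s}}=s\,\mathcal{R}_\nu$ for integer $s\ge1$, and for real $s\ge1$ the free convolution semigroup $\{\nu^{\boxplus s}\}_{s\ge1}$ is constructed (Nica and Speicher, lecture~14; Belinschi--Bercovici) precisely so that $\kappa_n(\nu^{\boxplus s})=s\,\kappa_n(\nu)$, equivalently $\mathcal{R}_{\nu^{\boxplus s}}=s\,\mathcal{R}_\nu$, continues to hold; citing the existence of that semigroup finishes (a). Alternatively one can derive (b) analytically from Voiculescu's subordination functions $\omega_1,\omega_2$ for $\nu\boxplus\xi$, which give $K_{\nu\boxplus\xi}(w)=K_\nu(w)+K_\xi(w)-\tfrac1w$.

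The one point needing care is the global invertibility claimed in (c). For a general compactly supported $\nu$ the restriction $G_\nu|_{\C^+}$ need not be injective --- for example $\nu=\tfrac12\delta_{-1}+\tfrac12\delta_1$ has $G_\nu(z)=z/(z^2-1)$ and $G_\nu(2i)=G_\nu(i/2)=-\tfrac{2i}{5}$ --- so ``$G_\nu:\C\setminus\R\to\C\setminus\R$ is invertible'' must be understood as invertibility of the germ of $G_\nu$ at $\infty$, with $G_\nu^{-1}$ and $\mathcal{R}_\nu$ first defined on discs about $0$ (of radii governed by the exponential growth of the $\kappa_n$, which is finite since $\nu$ is compactly supported) and then continued analytically; the formula $G_\nu^{-1}(w)=\mathcal{R}_\nu(w)+1/w$ holds wherever both sides are defined by this continuation. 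In any application of the lemma one must then verify that the relevant continuation remains in $\C\setminus\R$ and is a genuine inverse branch of $G_\nu$ along the path used --- for Proposition~\ref{prAa} and the saddle-point analysis this is purely local information near one boundary point and is unproblematic, whereas a literal global assertion would require restricting to measures whose Cauchy transform is univalent on each open half-plane. Granting this reading, the identities (a)--(c) are formal consequences of the moment--cumulant relations and the additivity of free cumulants.
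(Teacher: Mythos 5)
The paper offers no proof of Lemma \ref{lemStielTra} at all: it is quoted as a package of standard facts with a pointer to Nica and Speicher, lecture 12 (and lecture 14 for the semigroup). Your argument is exactly the standard one behind that citation --- the moment--cumulant functional equation $M_\nu(u)=C_\nu(uM_\nu(u))$ giving $G_\nu^{-1}(w)=\mathcal{R}_\nu(w)+\tfrac1w$ as an identity of germs at $0$/$\infty$, additivity of free cumulants for $\mathcal{R}_{\nu\boxplus\xi}=\mathcal{R}_\nu+\mathcal{R}_\xi$, and the construction of $\{\nu^{\boxplus s}\}_{s\ge1}$ for the scaling property --- so there is no divergence of method to report. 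The substantive point you add is the caveat about the literal wording, and you are right: $G_\nu$ restricted to $\C\setminus\R$ need not be injective, and your example $\nu=\tfrac12\delta_{-1}+\tfrac12\delta_1$, $G_\nu(z)=z/(z^2-1)$, $G_\nu(2i)=G_\nu(i/2)=-\tfrac{2i}{5}$, does show this, so the lemma as printed (like the definition of $\mathcal{R}_\nu$ ``for all $w\in\C$'', since the cumulant series has only a finite radius of convergence) is a conventional overstatement. It must be read as invertibility of $G_\nu$ near infinity, with $\mathcal{R}_\nu$ and $G_\nu^{-1}$ defined on a neighbourhood of $0$ and the identity propagated by analytic continuation; that is also how the paper actually uses it, since the displayed consequences --- the relation $w=G_\mu\bigl(\a(G_{\mu^{\boxplus\a^{-1}}})^{-1}(w)+\tfrac{1-\a}{w}\bigr)$ and equation (\ref{eqGmuaGmu}) --- are functional identities first established for $w$ in such a domain and then continued, respectively pushed to boundary values via (\ref{eqCauTraExt}), so nothing downstream is damaged. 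In short: your proof is correct, matches the cited standard route, and your domain caveat is a genuine (if harmless) correction to the statement rather than a gap in your argument.
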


Lemma \ref{lemStielTra} gives
$$
w
= G_\mu \left( \a (G_{\mu^{\boxplus \a^{-1}}})^{-1} (w) + \frac{1-\a}w \right),
$$
for all $w \in \C \setminus \R$. 
%
%First note, letting $K_\nu = G_\nu^{-1}$,
%$$
%K_{\mu^{\boxplus \a^{-1}}} (w)
%= \mathcal{R}_{\mu^{\boxplus \a^{-1}}} (w) + \frac1w
%= \frac1\a \mathcal{R}_\mu (w) + \frac1w,
%$$
%for all $w \in \C \setminus \R$, and so
%$$
%K_\mu (w)
%= \mathcal{R}_\mu (w) + \frac1w
%= \a K_{\mu^{\boxplus \a^{-1}}} (w) + \frac{1-\a}w.
%$$
%Take the $G_\mu$ of both sides.
%
Then, replacing $w$ by $G_{\mu_\a} (w)$, and noting that
$G_{\mu_\a} (w) = G_{\mu^{\boxplus \a^{-1}}} \left( \frac{w}\a \right)$ (see equation (\ref{eqMulCon0})),
%
%First note, recalling that $(D_\a \nu) [B] = \nu [ \a B ]$ for all probability measures $\nu$
%on $\R$ and Borel measurable subsets $B \subset \R$,
%$$
%G_{\mu_\a} (w)
%= \int_\R \frac1{w - x} \mu_\a[dx]
%= \int_\R \frac{\a}{w - x} \left( D_\a \mu^{\boxplus \a^{-1}} \right) [dx]
%= \int_\R \frac{\a}{w - \a x} \left( \mu^{\boxplus \a^{-1}} \right) [dx]
%= G_{\mu^{\boxplus \a^{-1}}} \left( \frac{w}\a \right).
%$$
%
$$
G_{\mu_\a} (w) = G_\mu \left( w + \frac{1 - \a}{G_{\mu_\a} (w)} \right),
$$
for all $w \in \C \setminus \R$.
%
%To see this note, replacing $w$ by
%$G_{\mu^{\boxplus \a^{-1}}} \left( \frac{w}\a \right)$,
%$$
%G_{\mu^{\boxplus \a^{-1}}} \left( \frac{w}\a \right)
%= G_\mu \left( w + \frac{1-\a}{G_{\mu^{\boxplus \a^{-1}}} \left( \frac{w}\a \right)} \right).
%$$
%The result follows since $G_{\mu_\a} (w) = G_{\mu^{\boxplus \a^{-1}}} \left( \frac{w}\a \right)$
%by equation (\ref{eqGmua}).
%

Lemma \ref{lemLebDec} implies that there exists an open subset of $\supp(\mu_\a)^\circ$, of
equal Lebesgue measure, in which $\mu_\a$ is absolutely continuous with respect to Lebesgue
measure and the density of $\mu_\a$ is continuous.
%
%Recall that $\mu_\a = \mu_\a^\text{at} + \mu_\a^\text{ac} + \mu_\a^\text{sc}$, where
%$\supp(\mu_\a^\text{at})$ and $\supp(\mu_\a^\text{sc})$ have Lebesgue measure zero.
%Also $\mu_\a^\text{ac}$ is a non-zero measure which is absolutely continuous with
%respect to Lebesgue measure, and its density is analytic outside a closed set of
%Lebesgue measure zero.
%
We extend the Cauchy transform, $G_{\mu_\a} : \C \setminus \R \to \C$, to this set by defining
\begin{equation}
\label{eqCauTraExt}
G_{\mu_\a} (c) := \lim_{\e \to 0_+} G_{\mu_\a} ( c - i \e ),
\end{equation}
for all $c$ in the set. This is well-defined with $\frac1\pi \Im(G_{\mu_\a} (c))$ equal to
the density of $\mu_\a$ at $c$. Therefore
\begin{equation}
\label{eqGmuaGmu}
G_{\mu_\a} (c) = G_\mu \left( c + \frac{1 - \a}{G_{\mu_\a} (c)} \right),
\end{equation}
for all such $c$.
%
%Fix $c \in \supp(\mu_\a^\text{ac})^\circ$ outside the set of Lebesgue measure zero. Then
%$$
%G_{\mu_\a} (c - i \e) = G_\mu \left( c - i \e + \frac{1 - \a}{G_{\mu_\a} (c - i \e)} \right),
%$$
%for all $\e>0$. As seen from equation (\ref{eqCauTraExt}), $G_{\mu_\a} (c) \in \C \setminus \R$
%is the well-defined continuous limit as $\e \to 0_+$ of the LHS. Therefore
%$$
%G_{\mu_\a} (c) = \lim_{\e \to 0_+} G_\mu \left( c - i \e + \frac{1 - \a}{G_{\mu_\a} (c - i \e)} \right).
%$$
%We wish to show that
%$$
%G_\mu \left( c + \frac{1 - \a}{G_{\mu_\a} (c)} \right)
%= \lim_{\e \to 0_+} G_\mu \left( c - i \e + \frac{1 - \a}{G_{\mu_\a} (c - i \e)} \right).
%$$
%Note, for any $\e > 0$,
%\begin{eqnarray*}
%\lefteqn{\left| G_\mu \left( c + \frac{1 - \a}{G_{\mu_\a} (c)} \right)
%- G_\mu \left( c - i \e + \frac{1 - \a}{G_{\mu_\a} (c - i \e)} \right) \right|} \\
%& = & \left| \int_a^b \left( \frac1{c + \frac{1 - \a}{G_{\mu_\a} (c)} - x}
%- \frac1{c - i \e + \frac{1 - \a}{G_{\mu_\a} (c - i \e)} - x} \right) \mu[dx] \right| \\
%& \le & \int_a^b \left| \frac{- i \e + \frac{1 - \a}{G_{\mu_\a} (c - i \e)}
%- \frac{1 - \a}{G_{\mu_\a} (c)}}{ \left(c + \frac{1 - \a}{G_{\mu_\a} (c)} - x \right)
%\left( c - i \e + \frac{1 - \a}{G_{\mu_\a} (c - i \e)} - x \right)} \right| \mu[dx] \\
%& \le & \frac{\e + |G_{\mu_\a} (c - i \e)^{-1} - G_{\mu_\a} (c)^{-1}|}
%{ \Im \left(\frac{1 - \a}{G_{\mu_\a} (c)} \right)
%\Im \left( -i \e + \frac{1 - \a}{G_{\mu_\a} (c - i \e)} \right)}.
%\end{eqnarray*}
%The required result follows since $\lim_{\e \to 0_+} G_{\mu_\a} (c - i \e)
%= G_{\mu_\a} (c) \in \C \setminus \R$.
%

\begin{proof}[Proof of Proposition \ref{prAa}:]
For each $c \in (a,b)$, define $f_{\a,c} : \C \setminus \R \to \C$ by
\begin{equation}
\label{eqfac}
f_{\a,c} (w) = G_\mu (w + c) - \frac{1 - \a}w,
\end{equation}
for all $w \in \C \setminus \R$. Then $f_{\a,c}$ is analytic and $c \in A_\a$ if and only if
roots of $f_{\a,c}$ exist (see equation (\ref{eqAa})). Moreover, given $c \in A_\a$, there
exists a $w_{\a,c} \in \C$ with $\Im(w_{\a,c}) > 0$ and
$\{w \in \C \setminus \R : f_{\a,c}(w) = 0 \} = \{w_{\a,c}, \overline{w_{\a,c}} \}$
(see Theorem \ref{thSine}).

Fix $c \in A_\a$. Since $f_{\a,c}$ is a non-constant analytic function in $\C \setminus \R$
with $f_{\a,c}(w_{\a,c}) = 0$, there exists an $\e \in (0,\Im(w_{\a,c}))$ with
$f_{\a,c}(w) \neq 0$ for all $w \in \bar{B}(w_{\a,c},\e) \setminus \{w_{\a,c}\}$. Thus, letting
$\partial B(w_{\a,c},\e)$ be the boundary of $B(w_{\a,c},\e)$, the Bolzano-Weierstrass
Theorem gives
$$
\inf_{w \in  \partial B(w_{\a,c},\e)} |f_{\a,c}(w)| > 0.
$$
%
%This is shown by contradiction. Assume that $\inf_{w \in \g^\ast} |h'(w)| = 0$. Thus
%for all $n$ there exists a $z_n \in \partial B(w_0,\e)$ with $|h'(z_n)| < \frac1n$. The
%sequence $\{z_n\}_{n\ge1}$ is bounded, and so can be assumed to be convergent by the
%Bolzano-Weierstrass Theorem. Denoting the limit by $z_0$, this gives $h'(z_0) = 0$.
%This is impossible since $w_0 \neq z_0$.
%
Rouch\'{e}'s Theorem (see Rudin, \cite{Rudin87}) and equation (\ref{eqfac}) thus imply
that there exists a $\d > 0$ for which $f_{\a,c}$ and $f_{\a,y}$ have the same number of
roots in $B(w_{\a,c},\e)$ for all $y \in (c-\d,c+\d)$.
%
%To see this note,
%\begin{eqnarray*}
%\sup_{y \in (c-\d,c+\d)} \sup_{w \in B(w_{\a,c},\e)} | f_{\a,c} (w) - f_{\a,y} (w) |
%& = & \sup_{y \in (c-\d,c+\d)} \sup_{w \in B(w_{\a,c},\e)} \left| G_\mu (w + c) - G_\mu (w + y) \right| \\
%& = & \sup_{y \in (c-\d,c+\d)} \sup_{w \in B(w_{\a,c},\e)}
%\left| \int \left( \frac1{w + c - x} - \frac1{w + y - x} \right) \mu[dx] \right| \\
%& \le & \sup_{y \in (c-\d,c+\d)} \int \left| \frac{y - c}{(w + c - x) (w + y - x)} \right| \mu[dx]
%\; \le \; \frac{\d}{(\Im(w_{\a,c}) - \e)^2},
%\end{eqnarray*}
%Thus we can choose $\d > 0$ sufficiently small that,
%$$
%\sup_{y \in (c-\d,c+\d)} \sup_{w \in B(w_{\a,c},\e)} | f_{\a,c} (w) - f_{\a,y} (w) |
%< \inf_{w \in  \partial B(w_{\a,c},\e)} |f_{\a,c}(w)|.
%$$
%
Therefore $(c-\d,c+\d) \subset A_\a$, and so $A_\a$ is open.
Also equations (\ref{eqCauTra}) and (\ref{eqfac}) give
$$
1 - \a = \int \frac{w_{\a,c}}{w_{\a,c} + c - x} d\mu[x].
$$
Comparing real and imaginary parts gives
$$
1 - \a
= \int \frac{|w_{\a,c}|^2}{|w_{\a,c} + c - x|^2} d\mu[x]
= \mu[\{c\}] + \int_{[a,b] \setminus \{c\}} \frac{|w_{\a,c}|^2}{|w_{\a,c} + c - x|^2} d\mu[x].
$$
%
%To see this note
%$$
%1 - \a
%= \int_a^b \frac{w (\overline{w} + c - x)}{|w + c - x|^2} d\mu[x] \\
%= \int_a^b \frac{|w|^2 + w(c - x))}{|w + c - x|^2} d\mu[x].
%$$
%The above equation follows since $\Im(w) > 0$.
%
Thus, since $\mu$ is not a point mass (see hypothesis \ref{hypWeakConv}),
$\mu[\{c\}] < 1 - \a$. Lemma \ref{lemLebDec} thus gives $c \not\in \supp(\mu_\a^\text{at})$,
and so $A_\a \cap \supp(\mu_\a^\text{at}) = \emptyset$.

We now show that $A_\a \subset \supp(\mu_\a)^\circ$. Fix $c \in A_\a$.
Then, since $A_\a$ is open and $A_\a \cap \supp(\mu_\a^\text{at}) = \emptyset$, $[c-\d,c+\d]$ is a
continuity set of $\mu_\a$ for all $\d>0$ sufficiently small. Therefore Lemma \ref{lemVoiWeakCon}
implies that
$$
\mu_\a [c-\d,c+\d] = \lim_{n \to \infty} \frac1n \MM_1^{(n)} [c-\d,c+\d],
$$
for all $\d > 0$ sufficiently small, where $\MM_1^{(n)}$ is the measure on $[a,b]$ of size $q_n$
for which, for any $B \subset [a,b]$ measurable, $\MM_1^{(n)}[B]$ is the expected number of
eigenvalues from $\{\l_1^{(q_n)}, \ldots, \l_{q_n}^{(q_n)} \}$ that are contained in $B$
(see equation (\ref{eqMMm})). Then, since $K_n((q_n,\cdot),(q_n,\cdot)) : [a,b]^2 \to \C$ is the
correlation kernel for $\l^{(q_n)}$, definitions \ref{defCorFnt} and \ref{defDetPointPro} give
$$
\mu_\a [c-\d,c+\d] = \lim_{n \to \infty} \int_{c-\d}^{c+\d} \frac1n K_n ((q_n,y),(q_n,y)) dy,
$$
for all $\d > 0$ sufficiently small. Also, a slight extension of Theorem \ref{thSine} (shown in
the same way) gives
$$
\lim_{n \to \infty} \sup_{y \in [c-\d,c+\d]}
\left| \frac1{n \rho_{\a} (y)} K_n ((q_n,y),(q_n,y)) - 1 \right| = 0,
$$
for all $\d > 0$ sufficiently small, where $\rho_{\a} (y) =
- \frac{1-\a}{\pi} \Im \left( w_{\a,y}^{-1} \right)$, and so
$$
\mu_\a [c-\d,c+\d] = \int_{c-\d}^{c+\d} \rho_\a(y) dy.
$$
%
%To see this note that
%$$
%\sup_{y \in [c-\d,c+\d]} \rho_\a(y) < \infty,
%$$
%since $\rho : A_\a \to (0,\infty)$ is continuous, and so
%$$
%\lim_{n \to \infty} \sup_{y \in [c-\d,c+\d]}
%\left| \frac1n K_n ((q_n,y),(q_n,y)) - \rho_{\a} (y) \right| = 0.
%$$
%
Thus, since $\rho_\a(y) > 0$ for all $y$, $\mu_\a [c-\d,c+\d] > 0$ for all $\d > 0$
sufficiently small, and so $c \in \supp (\mu_\a)$. This is true for all $c \in A_\a$,
and $A_\a$ is open, and so $A_\a \subset \supp(\mu_\a)^\circ$.

We now show that $\supp(\mu_\a)^\circ \setminus A_\a$ has Lebesgue measure zero.
Lemma \ref{lemLebDec} implies that there exists an open subset of $\supp(\mu_\a)^\circ$, of
equal Lebesgue measure, in which $\mu_\a$ is absolutely continuous with respect to Lebesgue
measure and the density of $\mu_\a$ is continuous. For all $c$ in this set, $G_{\mu_\a} (c)$
is well-defined and $\frac1\pi \Im(G_{\mu_\a} (c))$ is the density of $\mu_\a$ at $c$ (see
equation (\ref{eqCauTraExt})). For all such $c$, equations (\ref{eqGmuaGmu}) and (\ref{eqfac})
show that $f_{\a,c}$ has a root in $\C \setminus \R$ given by
$$
\frac{1-\a}{G_{\mu_\a} (c)}.
$$
Thus all such $c$ are in $A_\a$, and so $\supp(\mu_\a)^\circ \setminus A_\a$ has Lebesgue measure
zero. It remains to show that $\rho_\a(c)$ equals $\frac1\pi \Im(G_{\mu_\a} (c))$ for all such
$c$, the density of $\mu_\a$ at $c$. This follows by noting that
$\overline{w_{\a,c}} = \frac{1-\a}{G_{\mu_\a} (c)}$ (recall that there exists a
$w_{\a,c} \in \C$ with $\Im(w_{\a,c}) > 0$ and
$\{w \in \C \setminus \R : f_{\a,c}(w) = 0 \} = \{w_{\a,c}, \overline{w_{\a,c}} \}$)
and $\rho_{\a} (c) = - \frac{1-\a}{\pi} \Im \left( w_{\a,c}^{-1} \right)$.
%
%This follows since
%$$
%\rho_\a(c)
%= - \frac{1-\a}\pi \Im \left( \frac1{w_{\a,c}} \right)
%= \frac{1-\a}\pi \Im \left( \frac1{\overline{w_{\a,c}}} \right)
%= \frac{1-\a}\pi \Im \left( \frac{G_{\mu_\a} (c)}{1-\a} \right)
%= \rho_{\mu_\a} (c).
%$$
%
\end{proof}

\subsection{Examples}
\label{secEx}

In this section we examine Theorem \ref{thSine} in some special cases:

\subsubsection{Semicircle distribution}

Fix $\a \in (0,1)$ and let $\mu$ be the semicircle distribution given in equation (\ref{eqSemiCir}).
Using the well known formula for the Cauchy transform of this
distribution (see, for example, Anderson, Guionnet and Zeitouni, \cite{And10}), it follows
from equation (\ref{eqAa}) that
$$
A_\a = \left\{ c \in (-2,2) : \exists \; w \in \C \setminus \R \mbox{ with }
1 - \frac{2 (1 - \a)}{w (w + c)} = \sqrt{ 1 - 4(w + c)^{-2} } \right\},
$$
where we define $\sqrt{r e^{i \t}} := \sqrt{r} e^{i \frac{\t}2}$ for all $r \ge 0$
and $\t \in (-\pi,\pi]$.
Then $A_\a = (-2 \sqrt{\a}, 2 \sqrt{\a})$ and
$\{ w \in \C \setminus \R : 1 - \frac{2 (1 - \a)}{w (w + c)} = \sqrt{ 1 - 4(w + c)^{-2} } \}
= \{ w_{\a,c}, \overline{w_{\a,c}} \}$ for all $c \in (-2 \sqrt{\a}, 2 \sqrt{\a})$, where
$$
w_{\a,c} = \frac{ (1 - \a) c + (1 - \a) \sqrt{ 4 \a - c^2 } \; i}{2 \a}.
$$
The density in Theorem \ref{thSine} is given by
$$
\rho_{\a} (c) = \sqrt{\a} \; \rho_{\text{sc}} \left( \frac{c}{\sqrt{\a}} \right),
$$
for all $c \in (-2 \sqrt{\a}, 2 \sqrt{\a})$, where $\rho_{\text{sc}}$ is the density of the
semi-circle distribution.
%
%\begin{proof}
%To see this first recall that,
%$$
%\frac{2 (1 - \a)}{w_{\a,c}} = c - i \sqrt{ 4 \a - c^2 }.
%$$
%Therefore
%$$
%\rho_{\a} (c)
%\; = \; - \frac{(1 - \a)}{\pi} \Im \left( w_{\a,c}^{-1} \right)
%\; = \; \frac1{2\pi} \sqrt{ 4 \a - c^2 }
%\; = \; \frac{\sqrt{\a}}{2\pi} \sqrt{ 4 - \left( \frac{c}{\sqrt{\a}} \right)^2 }.
%$$
%\end{proof}
%

\subsubsection{A measure with two atoms}
\label{secExTwoAtoms}

Fix $\a \in (0,1)$, $\b \in (0,1)$ and $\mu := (1-\b) \d_0 + \b \d_1$.
It follows from equations (\ref{eqAa}) and (\ref{eqCauTra}) that
$$
A_\a = \{ c \in (0,1) : \exists \; w \in \C \setminus \R \mbox{ with }
\a (w + c)^2 + (\b - \a - c) (w + c) + (1 - \b) c = 0 \}.
$$
%
%To see recall equation (\ref{eqCauTra}) shows that
%$\{ w \in \C \setminus \R :  w G_\mu (w + c) = 1 - \a \}$
%consists of those points in $\C \setminus \R$ which satisfy
%\begin{eqnarray*}
%0
%& = & \int_0^1 \frac{w}{w - x + c} \mu[dx] - (1 - \a) \\
%& = & (1-\b) \frac{w}{w + c} + \b \frac{w}{w - 1 + c} - (1 - \a) \\
%& = & \frac{(1 - \b) w(w - 1 + c) + \b w(w + c) - (1 - \a) (w + c) (w - 1 + c)}{(w + c) (w - 1 + c)} \\
%& = & \frac{w(w + c) - (1 - \b) w - (1 - \a) (w + c)^2 + (1 - \a) (w + c)}{(w + c) (w - 1 + c)} \\
%& = & \frac{(w + c)^2 - c (w + c) - (1 - \b) (w + c) + (1 - \b) c - (1 - \a) (w + c)^2 + (1 - \a) (w + c)}
%{(w + c) (w - 1 + c)} \\
%& = & \frac{\a (w + c)^2 + ((1 - \a) - c - (1 - \b)) (w + c) + (1 - \b) c}{(w + c) (w - 1 + c)}.
%\end{eqnarray*}
%
The discriminant of the quadratic polynomial is
$(c - c_{\a,\b}^-) (c - c_{\a,\b}^+)$, where
$$
c_{\a,\b}^\pm := \left( \sqrt{(1 - \a) \b} \pm \sqrt{\a (1 - \b)} \right)^2.
$$
%
%To see this note,
%\begin{eqnarray*}
%\lefteqn{(\b - \a - c)^2 - 4 \a (1 - \b) c} \\
%& = & c^2 - 2 (\b - \a) c + (\b - \a)^2 - 4 \a (1 - \b) c \\
%& = & c^2 - 2 (\b - \a + 2 \a (1 - \b)) c + (\b - \a)^2 \\
%& = & c^2 - 2 (\b + \a - 2 \a \b) c + (\b - \a)^2 \\
%& = & (c - c_-) (c - c_+),
%\end{eqnarray*}
%where
%\begin{eqnarray*}
%c_\pm
%& = & \b + \a - 2 \a \b \pm \sqrt{(\b + \a - 2 \a \b)^2 - (\b - \a)^2} \\
%& = & \b + \a - 2 \a \b \pm \sqrt{(\b + \a - 2 \a \b + \b - \a) (\b + \a - 2 \a \b - \b + \a)} \\
%& = & \b + \a - 2 \a \b \pm \sqrt{(2 \b - 2 \a \b) (2 \a - 2 \a \b)} \\
%& = & \b + \a - 2 \a \b \pm 2 \sqrt{\a \b (1 - \a) (1 - \b)} \\
%& = & \left( \sqrt{(1 - \a) \b} \pm \sqrt{\a (1 - \b)} \right)^2.
%\end{eqnarray*}
%
%Note that $c_+ < 1$ if and only if $\sqrt{\a \b} + \sqrt{(1 - \a) (1 - \b)} < 1$,
%i.e., $\sqrt{(1 - \a) (1 - \b)} < 1 - \sqrt{\a \b}$.
%Note the term on the RHS is strictly positive for all $\a,\b \in (0,1)$.
%Therefore this inequality holds if and only if,
%\begin{eqnarray*}
%(1 - \a) (1 - \b) & < & 1 - 2 \sqrt{\a \b} + \a \b \\
%\Leftrightarrow 1 - \a - \b + \a \b & < & 1 - 2 \sqrt{\a \b} + \a \b \\
%\Leftrightarrow - \a + 2 \sqrt{\a \b} - \b & < & 0 \\
%\Leftrightarrow - \left( \sqrt{\a} - \sqrt{\b} \right)^2 & < & 0.
%\end{eqnarray*}
%
Note that $0 < c_{\a,\b}^- < c_{\a,\b}^+ < 1$, and so $A_\a = (c_{\a,\b}^-, c_{\a,\b}^+)$.
Moreover $\{ w \in \C \setminus \R : \a (w + c)^2 + (\b - \a - c) (w + c) + (1 - \b) c = 0 \}
= \{w_{\a,c}, \overline{w_{\a,c}}\}$ for all $c \in (c_{\a,\b}^-, c_{\a,\b}^+)$,
where
$$
w_{\a,c} := - c + \frac{ - \b + \a + c
\pm i \sqrt{ (c - c_{\a,\b}^-) (c_{\a,\b}^+ - c) }}{2 \a}.
$$
The density in Theorem \ref{thSine} is given by
$$
\rho_\a (c)
= \frac{ \sqrt{ (c - c_{\a,\b}^-) \; (c_{\a,\b}^+ - c)}}{2 \pi c (1-c)},
$$
for all $c \in (c_{\a,\b}^-, c_{\a,\b}^+)$.
%
%\begin{proof}
%Recall that
%$$
%\rho_\a (c)
%= - \frac{1 - \a}{\pi} \Im \left( w_{\a,c}^{-1} \right)
%= - \frac{1 - \a}{\pi} \Im \left( \frac{\overline{w_{\a,c}}}{|w_{\a,c}|^2} \right)
%= \frac{1 - \a}{\pi} \; \frac{\Im ({w_{\a,c}})}{|w_{\a,c}|^2},
%$$
%and
%\begin{eqnarray*}
%w_{\a,c}
%& = & - c + \frac{ - (\b - \a - c)
%\pm i \sqrt{ (c - c_{\a,\b}^-) (c_{\a,\b}^+ - c) }}{2 \a} \\
%& = & \frac{ - (\b - \a - c) - 2 \a c 
%\pm i \sqrt{ - (\b - \a - c)^2 + 4 \a (1 - \b) c }}{2 \a}.
%\end{eqnarray*}
%Therefore
%\begin{eqnarray*}
%|w_{\a,c}|^2
%& = & \frac{ (- (\b - \a - c) - 2 \a c)^2
%- (\b - \a - c)^2 + 4 \a (1 - \b) c }{4 \a^2} \\
%& = & \frac{ (\b - \a - c)^2 + 4 \a c (\b - \a - c) + 4 \a^2 c^2
%- (\b - \a - c)^2 + 4 \a (1 - \b) c }{4 \a^2} \\
%& = & c \frac{ (\b - \a - c) + \a c + (1 - \b) }\a \\
%& = & c \frac{ - \a - c + \a c + 1 }\a
%\; = \; c \frac{ (1 - \a) (1 - c) }\a.
%\end{eqnarray*}
%Therefore
%$$
%\rho_\a (c)
%= \frac{1 - \a}{\pi} \; \frac{\Im ({w_{\a,c}})}{|w_{\a,c}|^2}
%= \frac{1 - \a}{\pi} \; \frac{\a}{ c (1 - \a) (1 - c) }
%\frac{\sqrt{ (c - c_{\a,\b}^-) (c_{\a,\b}^+ - c) }}{2 \a}.
%$$
%\end{proof}
%
This recovers the result of Collins, \cite{Col05}, who took $B_n \in \H_n$
to be a projection of rank $\tilde{q}_n$ with $\frac{\tilde{q}_n}n \to \b \in (0,1)$ as
$n \to \infty$. Collins computed the asymptotics by showing that
$\pi_{q_n} U_n B_n U_n^\ast \pi_{q_n}$ is distributed according to a Jacobi
ensemble of parameters $(q_n,n - \tilde{q}_n - q_n, \tilde{q}_n - q_n)$, and employing
known asymptotic properties of Jacobi polynomials. Another example in which similar
asymptotics arise is the discrete planar bead model examined by Fleming, Forrester,
and Nordenstam, \cite{Fle10}.

\subsubsection{A measure with three atoms}

Fix $\a \in (0,1)$ and $\mu := \frac13 \left( \d_{-1} + \d_0 + \d_1 \right)$.
It follows from equations (\ref{eqAa}) and (\ref{eqCauTra}) that
$$
A_\a = \left\{ c \in (-1,1) :  \exists \; w \in \C \setminus \R \mbox{ with }
\a (w + c)^3 - c (w + c)^2 + \left( \frac23 - \a \right) (w + c) + \frac{c}3 = 0 \right\}.
$$
%
%To see this recall that $\{ w \in \C \setminus \R : w G_\mu (w + c) = 1 - \a \}$
%consists of those points which satisfy
%\begin{eqnarray*}
%1 - \a
%\; = \; \int_{-1}^1 \frac{w}{w - x + c} \mu[dx]
%& = & \frac{w}3 \left( \frac1{w + 1 + c} + \frac1{w + c} + \frac1{w - 1 + c} \right) \\
%\Leftrightarrow (1 - \a) (w + c) (w + 1 + c) (w - 1 + c)
%& = & \frac{w}3 ( (w + c) (w - 1 + c) + (w + 1 + c) (w - 1 + c) + (w + c) (w + 1 + c) ) \\
%\Leftrightarrow (1 - \a) (w + c) \left( (w + c)^2 - 1 \right)
%& = & \frac{w}3 \left( 2 (w + c)^2 + \left( (w + c)^2 - 1 \right) \right) \\
%\Leftrightarrow (1 - \a) (w + c) \left( (w + c)^2 - 1 \right)
%& = & \frac{w}3 \left( 3 (w + c)^2 - 1 \right) \\
%\Leftrightarrow (1 - \a) (w + c)^3 - (1 - \a) (w + c)
%& = & \frac13 \left( 3 (w + c)^3 - (w + c) \right) - \frac{c}3 \left( 3 (w + c)^2 - 1 \right) \\
%\Leftrightarrow (1 - \a) (w + c)^3 - (1 - \a) (w + c)
%& = & (w + c)^3 - c (w + c)^2 - \frac13 (w + c) + \frac{c}3.
%\end{eqnarray*}
%
The discriminant of the cubic polynomial is
$\frac43 (c^2 - c_\a^-) (c^2 - c_\a^+)$, where
$$
c_\a^\pm := \frac38 \left( g_\a \pm \sqrt{ (g_\a)^2 + \frac{64}3 \left( \frac23 - \a \right)^3 \a } \right),
\hspace{0.5cm}
g_\a := 3 \a^2 + 6 \left( \frac23 - \a \right) \a - \left( \frac23 - \a \right)^2.
$$
%
%To see this note that the discriminant is given by
%\begin{eqnarray*}
%\lefteqn{18 \a (-c) \left( \frac23 - \a \right) \frac{c}3 - 4 (-c)^3 \frac{c}3
%+ (-c)^2 \left( \frac23 - \a \right)^2 - 4 \a \left( \frac23 - \a \right)^3
%- 27 \a^2 \frac{c^2}9} \\
%& = & \frac43 c^4 + \left( - 6 \a \left( \frac23 - \a \right) + \left( \frac23 - \a \right)^2
%- 3 \a^2 \right) c^2 - 4 \a \left( \frac23 - \a \right)^3 \\
%& = & \frac43 c^4 - g(\a) c^2 - 4 \a \left( \frac23 - \a \right)^3,
%\end{eqnarray*}
%
Note that $0 < c_\a^- < c_\a^+ < 1$ when $\a \in (\frac23,1)$,
$c_\a^- = 0$ and $c_\a^+ = 1$ when $\a = \frac23$, and $c_\a^- < 0 < c_\a^+ < 1$ when
$\a \in (0,\frac23)$.
It thus follows that $A_\a = (-\sqrt{c_\a^+}, -\sqrt{c_\a^-})
\bigcup (\sqrt{c_\a^-}, \sqrt{c_\a^+})$ for all $\a \in [\frac23,1)$, and
$A_\a = (-\sqrt{c_\a^+}, \sqrt{c_\a^+})$ for all $\a \in (0,\frac23)$. Moreover,
$\{ w \in \C \setminus \R : \a (w + c)^3 - c (w + c)^2 + \left( \frac23 - \a \right) (w + c)
+ \frac{c}3 = 0 \} = \{w_{\a,c}, \overline{w_{\a,c}} \}$ for all
$c \in A_\a$, where $w_{\a,c}$ is the root of the cubic in the upper half complex plane.

\section{Determinantal structure of Gelfand-Tsetlin patterns}
\label{secDSOGTP}

Define a probability measure on $\overline{\GT_n}$, the set of Gelfand-Tsetlin patterns
of depth $n$, by
\begin{equation}
\label{eqMeasCnCn0}
d\nu_n[y^{(1)},\ldots,y^{(n)}] := \frac1{Z_n}
\det \left[ \phi_i(y_j^{(n)}) \right]_{i,j=1}^n dy^{(n)} dy^{(n-1)} \ldots dy^{(1)},
\end{equation}
for all $(y^{(1)},\ldots,y^{(n)}) \in \overline{\GT_n}$, where $Z_n > 0$ is a normalisation
constant, $d y^{(r)}$ is Lebesgue measure on $\R^r$ for each $r$, and
$\phi_1,\ldots,\phi_n : \R \to \R$ are such that the integrals in Theorem \ref{thmGTDet} are
well-defined and finite. In this section we prove a determinantal structure for the space
$(\overline{\GT_n},\nu_n)$. Though the main result of this section, Theorem \ref{thmGTDet},
can be deduced from the more general results of Defosseux, \cite{Man08}, we give a simplified
proof with an alternative expression for the correlation kernel.

\begin{rem}
\label{remphiEx}
The measure in equation (\ref{eqUniMeasGTn}) can be written in the above form by taking
$\phi_i = \d_{x_i^{(n)}}$ for all $i \in \{1,\ldots,n\}$. As we shall see in
Section (\ref{secUIE}), the measure induced by the eigenvalue minor process of UIEs can
also be written in this form.
\end{rem}

For technical reasons we consider a subset of $\overline{GT}_n$ on which the measure in
equation (\ref{eqMeasCnCn0}) is supported. We say that a pair
$(y^{(r)},y^{(r+1)}) \in \CC_r \times \CC_{r+1}$ is {\em asymmetrically interlaced} if 
$$
y^{(r+1)}_1 > y^{(r)}_1 \ge y^{(r+1)}_2 > y^{(r)}_2 > \cdots > y^{(r)}_r \ge y^{(r+1)}_{r+1}.
$$
We denote this by $y^{(r+1)} \succ y^{(r)}$. Also, for each $n \ge 1$, define
$\GT_n \subset \CC_1 \times \cdots \times \CC_n$ by
$$
\GT_n := \left\{ (y^{(1)},\ldots,y^{(n)}) \in \CC_1 \times \cdots \times \CC_n :
y^{(n)} \succ y^{(n-1)} \succ \cdots \succ y^{(1)} \right\}.
$$
Comparing with $\overline{\GT_n}$ (see equation (\ref{eqGTnS})), $\GT_n \subset \overline{\GT_n}$
is the set of Gelfand-Tsetlin patterns of depth $n$ with distinct particles and for which
particles on neighbouring levels satisfy the asymmetric interlacing constraint. It is
easy to see that $\nu_n$ is supported on $\GT_n$.

As in Section \ref{secTEOTMOTGUE}, we identify $\GT_n$ with a space of configurations of
$\frac12 n(n+1)$ particles on $\{1,\ldots,n\} \times \R$. Definition \ref{defRanPoiFie}
thus implies that $(\GT_n,\nu_n)$ is a random point field on
$\{1,\ldots,n\} \times \R$. We shall prove the following:
\begin{thm}
\label{thmGTDet}
Define $\Phi_n : \R^n \to \R$ by
\begin{equation}
\label{eqPhin}
\Phi_n(y) := \left( \prod_{k=1}^n \phi_k(y_k) \right) \Delta_n(y),
\end{equation}
for $y \in \R^n$. Also define $B_n := \int_{\R^n} dy \; \Phi_n(y)$. Finally, letting $\S_n$
be the set of permutations of $\{1,\ldots,n\}$, define $\S_n \CC_n := \bigcup_{\s \in \S_n} \s (\CC_n)$.
Then $B_n \neq 0$, and the random point field $(\GT_n,\nu_n)$ is determinantal
with correlation kernel $K_n : (\{1,\ldots,n\} \times \R)^2 \to \R$ which satisfies
\begin{eqnarray*}
K_n ((r,u),(s,v))
& = & \frac1{B_n} \int_{\S_n \CC_n} dy \; \Phi_n(y)
\sum_{j=1}^n 1_{v \le u < y_j} \frac{(y_j - u)^{n-r-1}}{(n-r-1)!}
\frac{\partial^{n-s}}{\partial v^{n-s}} \prod_{i \neq j} \left( \frac{v - y_i}{y_j - y_i} \right) \\
& - & \frac1{B_n} \int_{\S_n \CC_n} dy \; \Phi_n(y)
\sum_{j=1}^n 1_{v > u \ge y_j} \frac{(y_j - u)^{n-r-1}}{(n-r-1)!}
\frac{\partial^{n-s}}{\partial v^{n-s}} \prod_{i \neq j} \left( \frac{v - y_i}{y_j - y_i} \right),
\end{eqnarray*}
for all $r \in \{1,\ldots,n-1\}$, $s \in \{1,\ldots,n\}$ and $u,v \in \R$.
\end{thm}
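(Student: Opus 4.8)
The plan is to establish the determinantal structure of $(\GT_n,\nu_n)$ by recognising the measure in \eqref{eqMeasCnCn0} as a biorthogonal-type ensemble over a sequence of nested integrations, and to apply the Eynard--Mehta theorem (or rather the Lindstr\"om--Gessel--Viennot / Borodin--Rains formalism for measures built from products of transition kernels). First I would integrate out the variables $y^{(1)},\ldots,y^{(n-1)}$ one level at a time: the constraint $y^{(r+1)}\succ y^{(r)}$ together with Lebesgue measure on $\R^r$ means that integrating over $y^{(r)}\in\R^r$ subject to interlacing with $y^{(r+1)}$ produces, by the classical computation, a ratio-of-differences weight, and the whole tower collapses to the Vandermonde $\Delta_n(y^{(n)})$ up to a combinatorial constant. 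This both explains the appearance of $\Phi_n$ in \eqref{eqPhin} and shows $B_n=\int_{\R^n}\Phi_n(y)\,dy\neq 0$ — because, after symmetrising over $\S_n$, $B_n$ is (a constant times) the normalisation $Z_n$ of a positive measure, hence nonzero; the passage from $\CC_n$ to $\S_n\CC_n$ in the statement is exactly the symmetrisation that removes the ordering constraint on the top level.

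Next I would set up the marked/unmarked determinantal machinery. The key is to express $\nu_n$ as a product of one-step transition ``kernels'' $t_r\big(y^{(r)},y^{(r+1)}\big)$ — each being $1$ on the interlacing set and $0$ otherwise — multiplied by the top weight $\det[\phi_i(y_j^{(n)})]$, and to observe that the cokernel structure is governed by the dual functions obtained by convolving the $t_r$'s, which for the interlacing kernel are the divided-power kernels $\frac{(y-u)^{n-r-1}}{(n-r-1)!}1_{u<y}$ appearing in the statement. I would then invoke the general determinantal formula (as in Borodin--Ferrari or Defosseux, \cite{Man08}, which the paper explicitly says the result can be deduced from) to write $K_n$ in terms of (i) the ``forward'' functions — iterated antiderivatives of the interlacing kernel applied to a basis dual to $\{\phi_i\}$ — and (ii) the ``backward'' functions $\prod_{i\neq j}\frac{v-y_i}{y_j-y_i}$, which are precisely the Lagrange interpolation polynomials dual to the nodes $y_1,\ldots,y_n$; the $\frac{\partial^{n-s}}{\partial v^{n-s}}$ accounts for the $n-s$ interlacing steps from level $n$ down to level $s$. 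Averaging the resulting finite-rank kernel against $\frac1{B_n}\Phi_n(y)\,dy$ over $\S_n\CC_n$ gives the stated integral representation, with the $1_{v\le u}$ versus $1_{v>u}$ split coming from the standard ``principal value'' decomposition of the double-contour/step-function kernel into its main term and the explicit correction term.

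The main obstacle I expect is bookkeeping the index shifts and sign conventions so that the divided differences, the Lagrange polynomials, and the operator $\frac{\partial^{n-s}}{\partial v^{n-s}}$ line up to give exactly the asymmetric $1_{v>u\ge y_j}$ (with the $\ge$, matching the asymmetric interlacing $y^{(r)}_i\ge y^{(r+1)}_{i+1}$ defining $\GT_n$) rather than a strict inequality, and so that the correlation kernel is genuinely independent of the many arbitrary choices (basis dual to $\{\phi_i\}$, ordering of the $\S_n$-orbit, gauge as in Remark \ref{remCorKer}). Concretely, the delicate point is verifying that the finite-rank ``kernel'' one writes down before averaging reproduces all correlation functions $\rho_m$ via $\det[K_n]$ — i.e.\ checking the reproducing/projection identity $\int K_n((r,u),(t,w))K_n((t,w),(s,v))\,d w = K_n((r,u),(s,v))$ holds after the $\Phi_n$-average, which is where the precise duality between the iterated interlacing kernels and the Lagrange interpolation basis is used. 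Once that identity and the well-definedness of all the integrals (guaranteed by the hypothesis on $\phi_1,\ldots,\phi_n$) are in place, the theorem follows, and specialising $\phi_i=\d_{x_i^{(n)}}$ collapses the $\S_n\CC_n$-integral to a finite sum over $j$ and recovers \eqref{eqKnrusvFixTopLine}.
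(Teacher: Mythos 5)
Your high-level strategy (write $\nu_n$ as a product of indicator transition kernels between levels and feed it into an Eynard--Mehta/LGV-type theorem) is the same family of argument the paper uses -- Warren's identity (\ref{eqWar06}) followed by Proposition 2.13 of Johansson \cite{Joh06b} -- and your symmetrisation argument that $B_n$ is a positive multiple of $Z_n$, hence nonzero, is sound. The genuine gap is the step ``averaging the resulting finite-rank kernel against $\frac1{B_n}\Phi_n(y)\,dy$ over $\S_n\CC_n$ gives the stated integral representation.'' A mixture over the top level $y^{(n)}$ of determinantal processes is not in general determinantal, and even when it is, its correlation kernel is not the average of the conditional kernels; the fact that $K_n$ has this averaged form is the \emph{conclusion} of a computation, not a principle one may invoke. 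In the paper it emerges from the Eynard--Mehta linear algebra: the kernel is $-\phi_{r,s}(u,v)$ plus a double sum involving the inverse of the Gram matrix $A$, and one must compute $\det A$ and the Cramer-rule determinants $\det A^{(j,v)}$ explicitly (equations (\ref{eqphi0n+1})--(\ref{eqtildeJn}) and the Vandermonde expansions in the proof of Proposition \ref{prCnCnDet}); these reduce to integrals of $\Phi_n$ over $\S_n\CC_n$, the factors involving the auxiliary data cancel, and the subtracted term $\phi_{r,s}(u,v)=\frac{(v-u)^{s-r-1}}{(s-r-1)!}1_{v>u}$ is absorbed by Lagrange interpolation (equation (\ref{eqphirs2})) to produce the $1_{v\le u<y_j}$ versus $1_{v>u\ge y_j}$ split -- not by any contour/principal-value decomposition, which only enters later in Proposition \ref{prIntExpKl}. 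None of this computational content appears in your outline, and it is the main body of the proof.

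Two further points. Your proposed ``delicate point,'' verifying a reproducing identity $\int K_n((r,u),(t,w))K_n((t,w),(s,v))\,dw=K_n((r,u),(s,v))$, is neither necessary nor the right criterion: the determinantal property of all $\rho_m$ follows directly from the Eynard--Mehta theorem once the measure is in product-of-determinants form, and correlation kernels here need not be projections (nor would this identity by itself prove $\rho_m=\det[K_n]$). Second, the Eynard--Mehta input used by the paper requires the \emph{same} number of particles on each level, whereas $\GT_n$ has $r$ particles on level $r$; the paper bridges this with a concrete device you do not mention -- pinning $n-1$ virtual particles at positions $c_2>\cdots>c_n$ below a window $[-M,M]$ so that interlacing supplies the missing particles on every level (Lemma \ref{lemMeasdetGT}), checking that the resulting kernel restricted to $(\{1,\ldots,n-1\}\times(-M,M))\times(\{1,\ldots,n\}\times(-M,M))$ is independent of $c_2,\ldots,c_n$, and letting $M\to\infty$. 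If instead you outsource the determinantal structure wholesale to Defosseux \cite{Man08} or Borodin--Ferrari (legitimate in principle, as the paper notes), the theorem still reduces to translating their general kernel into the specific formula stated here, which is again exactly the computation your proposal leaves unperformed.
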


In order to show this we consider a related measure on systems of interlaced
particles with the same number of {\em indistinguishable} particles on each level. 
Given $z,z' \in \S_n \CC_n$ with $z \in \s^{-1} (\CC_n)$ and $z' \in \tau^{-1} (\CC_n)$
some $\s,\tau \in \S_n$, we say that the pair $(z,z')$ is {\em interlaced} if
$$
z'_{\tau(1)} > z_{\s(1)} \ge z'_{\tau(2)} > z_{\s(2)} \ge \cdots \ge z'_{\tau(n)} > z_{\s(n)}.
$$
Let $F_n \subset (\S_n \CC_n)^2$ be the set of all interlaced pairs. A nice characterisation
of this type of interlacing is given in Warren, \cite{W06}: Given $z,z' \in \S_n \CC_n$ with
$z \in \s^{-1} (\CC_n)$ and $z' \in \tau^{-1} (\CC_n)$ some $\s,\tau \in \S_n$,
\begin{equation}
\label{eqWar06}
1_{(z,z') \in F_n} = \det \left[ 1_{z'_{\tau(k)} > z_{\s(j)}} \right]_{j,k=1}^n.
\end{equation}
%
%To see this suppose that $z,z' \in \S_n \CC_n$ with $z \in \s^{-1} (\CC_n)$ and $z' \in \tau^{-1} (\CC_n)$.
%Then $(z,z') \in F_n$ iff
%$$
%z'_{\tau(1)} > z_{\s(1)} \ge z'_{\tau(2)} > z_{\s(2)} \ge \cdots \ge z'_{\tau(n)} > z_{\s(n)}.
%$$
%Therefore, for any $j \in \{1,\ldots,n\}$,
%$$
%z'_{\tau(1)} > \cdots > z'_{\tau(j)} > z_{\s(j)} \ge z'_{\tau(j+1)} > \cdots > z'_{\tau(n)}.
%$$
%in which case
%$$
%\det \left[ 1_{z'_{\tau(k)} > z_{\s(j)}} \right]
%= \det [1_{k \le j}]
%= 1.
%$$
%

Fix $n \in \N$, $M>0$ and $(c_1,\ldots,c_n) \in \CC_n$ with $c_1 = -M$. Consider
the space $(\R^n)^n$, interpreted
as the set of configurations of $n^2$ particles in $\R^n$ with exactly $n$ particles in each
$\R$. Denoting elements of this space by $\bar z := (z^{(1)},\ldots,z^{(n)})$, 
let $E \subset (\R^n)^n$ be the set of configurations for which
\begin{itemize}
\item $M \ge z_j^{(n)} \ge -M$ for all $j \in \{1,\ldots,n\}$,
\item $z^{(1)}_{\tau(j)} = c_j$ for all $j \in \{2,\ldots,n\}$ whenever
$z^{(1)} \in \tau^{-1} (\CC_n)$ some $\tau \in \S_n$,
\item $(z^{(r)},z^{(r+1)}) \in F_n$ for all $r \in \{1,\ldots,n-1\}$.
\end{itemize}
Choosing $M>0$ sufficiently large, we can define the measure, $\xi_n$, on $(\R^n)^n$ by
\begin{equation}
\label{eqMeasCnCn1}
d\xi_n [\bar z] := \frac1{(n!)^n Z}
\begin{cases}
\det \left[ \phi_i(z_{\s(j)}^{(n)}) \right]_{i,j=1}^n dz^{(n)} dz^{(n-1)} \ldots dz^{(1)}
& ; \bar{z} \in E \mbox{ with } z^{(n)} \in \s^{-1} (\CC_n), \\
0 & ; \bar{z} \in (\R^n)^n \setminus E.
\end{cases}
\end{equation}
where $Z > 0$ is a normalisation constant, and $dz^{(r)}$ is the Lebesgue measure on
$\R^n$ for each $r$.

We identify $(\R^n)^n$ with a space of configurations of $n^2$ particles on
$\{1,\ldots,n\} \times \R$ using the natural map from $(\R^n)^n$ to
$(\{1,\ldots,n\} \times \R)^{n^2}$ given by
$$
(z^{(1)},\ldots,z^{(n)}) \mapsto \left( (1,z_1^{(1)}),\ldots,(1,z_n^{(1)}),
(2,z_1^{(2)}),\ldots,(2,z_n^{(2)}),(3,z_1^{(3)}),\ldots,(3,z_n^{(3)}),\ldots\ldots \right),
$$
for all $(z^{(1)},\ldots,z^{(n)}) \in (\R^n)^n$. In words, the first $n$ particles of each
configuration are contained in $\{1\} \times \R$, the next $n$ particles are contained in
$\{2\} \times \R$, the next $n$ in $\{3\} \times \R$ etc. Definition \ref{defRanPoiFie} thus
implies that $((\R^n)^n,\xi_n)$ is a random point field on $\{1,\ldots,n\} \times \R$. We
now show this field is determinantal and calculate the correlation kernel.

\begin{lem}
\label{lemMeasdetGT}
Define $\phi_{0,1} : \{1,\ldots,n\} \times \R \to \R$,
$\phi_{1,2},\phi_{2,3}, \ldots, \phi_{n-1,n} : \R \times \R \to \R$ and
$\phi_{n,n+1} : \R \times \{1,\ldots,n\} \to \R$ by
\begin{eqnarray*}
\phi_{0,1} (i,v) & := &
\begin{cases}
1_{M \ge v \ge -M} & ; i = 1, \\
\d_{c_i} (v) & ; i \in \{2,\ldots,n\},
\end{cases} \\
\phi_{r,r+1} (u,v) & := & 1_{M \ge v > u \ge c_n} \; \mbox{ for all } r \in \{1,\ldots,n-1\}, \\
\phi_{n,n+1} (u,j) & := & \phi_j(u) 1_{M \ge u \ge -M}.
\end{eqnarray*}
Also define $z^{(0)}_j = z^{(n+1)}_j := j$ for all $j \in \{1,\ldots,n\}$. Then for all
$\bar z \in (\R^n)^n$,
\begin{equation}
\label{eqMeasCnCn2}
d\xi_n [\bar z] = \frac1{(n!)^n Z} \prod_{r=0}^n
\det[ \phi_{r,r+1} (z^{(r)}_j,z^{(r+1)}_k) ]_{j,k=1}^n dz^{(n)} dz^{(n-1)} \ldots dz^{(1)}.
\end{equation}
\end{lem}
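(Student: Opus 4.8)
The plan is to verify the identity (\ref{eqMeasCnCn2}) by expanding the $n+1$ determinantal blocks on its right-hand side and comparing the product term by term with the definition (\ref{eqMeasCnCn1}) of $\xi_n$. The first step is to reduce to a single chamber. Both sides of (\ref{eqMeasCnCn2}), viewed as measures on $(\R^n)^n$, are invariant under permuting the $n$ particles on any fixed level: for the left side this is immediate from (\ref{eqMeasCnCn1}), since $E$ and the interlacing set $F_n$ are permutation invariant and $\det[\phi_i(z^{(n)}_{\s(j)})]_{i,j}$ depends on $z^{(n)}$ only through its decreasing rearrangement; for the right side, a permutation $\rho$ of level $r$ permutes the columns of the block indexed by $(r-1,r)$ and the rows of the block indexed by $(r,r+1)$, contributing $\sgn(\rho)^2=1$. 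Since the locus where two particles share a level is null for both measures, it suffices to prove (\ref{eqMeasCnCn2}) on the chamber $\O_0:=\{\bar z:z^{(r)}_1>\cdots>z^{(r)}_n\text{ for all }r\}$, on which every sorting permutation is the identity and (\ref{eqMeasCnCn1}) reads $d\xi_n[\bar z]=\frac1{(n!)^nZ}\,1_{\bar z\in E}\,\det[\phi_i(z^{(n)}_j)]_{i,j}\,dz^{(n)}\ldots dz^{(1)}$.

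Next I would evaluate each block on $\O_0$. For $1\le r\le n-1$, write $\phi_{r,r+1}(u,v)=1_{u\ge c_n}\,1_{v>u}\,1_{v\le M}$; the factor $1_{z^{(r)}_j\ge c_n}$ depends only on the row index and $1_{z^{(r+1)}_k\le M}$ only on the column index, so multilinearity pulls them out and leaves $\det[1_{z^{(r+1)}_k>z^{(r)}_j}]_{j,k}$, which on $\O_0$ equals $1_{(z^{(r)},z^{(r+1)})\in F_n}$ by Warren's identity (\ref{eqWar06}). The block $r=n$ factors as $\bigl(\prod_j 1_{M\ge z^{(n)}_j\ge-M}\bigr)\det[\phi_k(z^{(n)}_j)]_{j,k}=\bigl(\prod_j 1_{M\ge z^{(n)}_j\ge-M}\bigr)\det[\phi_i(z^{(n)}_j)]_{i,j}$, after transposing the determinant. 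For $r=0$, the first row of $[\phi_{0,1}(j,z^{(1)}_k)]_{j,k}$ is $(1_{M\ge z^{(1)}_k\ge-M})_k$ and rows $i\ge2$ are $(\d_{c_i}(z^{(1)}_k))_k$; since $c_1=-M$ forces $-M=c_1>c_2>\cdots>c_n$, a free entry in $[-M,M]$ is larger than every $c_i$ with $i\ge2$, so the only column-permutation whose support meets $\O_0$ is the identity, and as a measure this block equals $1_{M\ge z^{(1)}_1\ge-M}\prod_{i=2}^n\d_{c_i}(z^{(1)}_i)$. Multiplying the blocks, the only factor that is not an indicator is $\det[\phi_i(z^{(n)}_j)]_{i,j}$, matching (\ref{eqMeasCnCn1}).

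It then remains to check that on $\O_0$ the product of all the indicators produced equals $1_{\bar z\in E}$. The $r=0$ block produces the pinning $z^{(1)}_i=c_i$ for $i\ge2$ together with $z^{(1)}_1\in[-M,M]$; the blocks $1\le r\le n-1$ produce $\prod_r 1_{(z^{(r)},z^{(r+1)})\in F_n}$; the block $r=n$ produces $z^{(n)}\in[-M,M]^n$ --- precisely the three defining conditions of $E$. The remaining auxiliary indicators are $\prod_{r=1}^{n-1}\prod_j 1_{z^{(r)}_j\ge c_n}$ and $\prod_{r=1}^{n-1}\prod_k 1_{z^{(r+1)}_k\le M}$, and I expect the only genuine work to be checking that these are automatic on $E$ and constrain nothing further: the asymmetric interlacing $z^{(r+1)}\succ z^{(r)}$ makes both $\min z^{(r)}$ and $\max z^{(r)}$ strictly increasing in $r$, so on $E$, where $z^{(1)}$ has least entry $c_n$ and $z^{(n)}\subset[-M,M]$, every intermediate level lies in $[c_n,M]^n$; the choice $c_1=-M$ is exactly what makes this chain of inequalities close at both ends. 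Hence the product of indicators is $1_{\bar z\in E}$ on $\O_0$, which gives (\ref{eqMeasCnCn2}) there and then on all of $(\R^n)^n$ by the symmetry above. (One may instead skip the reduction to $\O_0$ and carry the sorting permutations $\s_1,\ldots,\s_n$ of the levels through the computation; the accumulated signs --- one $\sgn(\s_r)\sgn(\s_{r+1})$ from each Warren factor, plus $\sgn(\s_1)$ from the $r=0$ block and $\sgn(\s_n)$ from the $r=n$ block --- telescope to $1$.)
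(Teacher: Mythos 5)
Your proposal is correct and follows essentially the same route as the paper's proof: reduce to the ordered chamber, convert the interlacing indicators via Warren's identity (\ref{eqWar06}) with the $[c_n,M]$ bounds absorbed by multilinearity, identify the $r=0$ and $r=n$ blocks, observe the auxiliary bound indicators are redundant on $E$ since intermediate levels lie in $[c_n,M]$, and extend by permutation invariance with the signs cancelling. You merely run the computation from the determinant product back to $1_E$ rather than the reverse, and you fill in details the paper leaves implicit, which is fine.
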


\begin{proof}
Fixing $\bar z \in (\CC_n)^n$, it follows from the definition $E$ that,
$$
1_E (\bar z)
= 1_{M \ge z_1^{(1)} \ge -M} \left( \prod_{j=2}^n \d_{c_j} (z_j^{(1)}) \right)
\left( \prod_{r=1}^{n-1} 1_{(z^{(r)},z^{(r+1)}) \in F_n} \right)
\left( \prod_{i=1}^n 1_{M \ge z_i^{(n)} \ge -M} \right).
$$
%
%True since $\bar x \in (\CC_n)^n \cap E$ iff,
%\begin{itemize}
%\item $M \ge z_j^{(n)} \ge -M$ for all $j \in \{1,\ldots,n\}$,
%\item $z^{(1)}_j = c_j$ for all $j \in \{2,\ldots,n\}$,
%\item $(z^{(r)},z^{(r+1)}) \in F_n$ for all $r \in \{1,\ldots,n-1\}$.
%\end{itemize}
%
The interlacing formula of Warren (see equation (\ref{eqWar06})) thus gives
$$
1_E (\bar z)
= 1_{M \ge z_1^{(1)} \ge -M} \left( \prod_{j=2}^n \d_{c_j} (z_j^{(1)}) \right)
\left( \prod_{r=1}^{n-1} \det \left[ 1_{M \ge z^{(r+1)}_k > z^{(r)}_j \ge c_n} \right]_{j,k=1}^n \right)
\left( \prod_{i=1}^n 1_{M \ge z_i^{(n)} \ge -M} \right).
$$
%
%True since,
%\begin{eqnarray*}
%\lefteqn{1_E (\bar z)} \\
%& = & 1_{M \ge z_1^{(1)} \ge -M} \left( \prod_{j=2}^n \d_{c_j} (z_j^{(1)}) \right)
%\left( \prod_{r=1}^{n-1} 1_{(z^{(r)},z^{(r+1)}) \in F_n} \right)
%\left( \prod_{i=1}^n 1_{M \ge z_i^{(n)} \ge -M} \right) \\
%& = & 1_{M \ge z_1^{(1)} \ge -M} \left( \prod_{j=2}^n \d_{c_j} (z_j^{(1)}) \right)
%\left( \prod_{r=1}^{n-1} \left( 1_{(z^{(r)},z^{(r+1)}) \in F_n}
%\prod_{l=1}^n 1_{M \ge z_l^{(r)}, z_l^{(r+1)} \ge c_n} \right) \right)
%\left( \prod_{i=1}^n 1_{M \ge z_i^{(n)} \ge -M} \right) \\
%& = & 1_{M \ge z_1^{(1)} \ge -M} \left( \prod_{j=2}^n \d_{c_j} (z_j^{(1)}) \right)
%\left( \prod_{r=1}^{n-1} \left( \det \left[ 1_{z^{(r+1)}_k > z^{(r)}_j} \right]
%\prod_{l=1}^n 1_{M \ge z_l^{(r)}, z_l^{(r+1)} \ge c_n} \right) \right)
%\left( \prod_{i=1}^n 1_{M \ge z_i^{(n)} \ge -M} \right) \\
%& = & 1_{M \ge z_1^{(1)} \ge -M} \left( \prod_{j=2}^n \d_{c_j} (z_j^{(1)}) \right)
%\left( \prod_{r=1}^{n-1} \det \left[ 1_{M \ge z^{(r+1)}_k > z^{(r)}_j \ge c_n} \right] \right)
%\left( \prod_{i=1}^n 1_{M \ge z_i^{(n)} \ge -M} \right).
%\end{eqnarray*}
%
The required result in this case follows from equation (\ref{eqMeasCnCn1}).
The result when $\bar z \in (\S_n \CC_n)^n$ follows since the expressions given in
equations (\ref{eqMeasCnCn1}) and (\ref{eqMeasCnCn2}) are invariant under permutations.
Finally, the result is trivially true when $\bar z \in (\R^n)^n \setminus (\S_n \CC_n)^n$,
since both expressions are identically $0$.
\end{proof}

Equation (\ref{eqMeasCnCn2}) gives
$$
Z = \frac1{(n!)^n} \int_{(\R^n)^n}
\prod_{r=0}^n \det[ \phi_{r,r+1} (z^{(r)}_j,z^{(r+1)}_k) ]_{j,k=1}^n
dz^{(n)} dz^{(n-1)} \ldots dz^{(1)}.
$$
The Cauchy-Binet identity (Proposition 2.10 of Johansson, \cite{Joh06b}) then gives
$Z = \det A$, where $A \in \C^{n \times n}$ is given by
$A_{ij} := \phi_{0,n+1} (i,j)$ for all $i,j \in \{1,\ldots,n\}$, and
$\phi_{0,s} : \{1,\ldots,n\} \times \R \to \R$, $\phi_{r,s} : \R \times \R \to \R$ and
$\phi_{r,n+1} : \R \times \{1,\ldots,n\} \to \R$
are defined by
$$
\phi_{r,s} (u,v) := 1_{s>r} \int_\R dz_1 \cdots \int_\R dz_{s-r-1} \;
\phi_{r,r+1} (u,z_1) \phi_{r+1,r+2} (z_1,z_2) \ldots \phi_{s-1,s} (z_{s-r-1},v),
$$
for all $r,s \in \{1,\ldots,n\}$. Therefore
\begin{eqnarray}
\label{eqphirs}
\phi_{r,s} (u,v)
& = & \frac{(v-u)^{s-r-1}}{(s-r-1)!} 1_{M \ge v > u \ge c_n} 1_{s>r}, \\
\label{eqphi0s}
\phi_{0,s} (i,v)
& = & \frac{(v - c_i)^{s-2+1_{i=1}}}{(s-2+1_{i=1})!} 1_{M \ge v > c_i}, \\
\label{eqphirn+1}
\phi_{r,n+1} (u,j)
& = & \int_{-M}^M dz \; \phi_j(z) \frac{(z-u)^{n-r-1}}{(n-r-1)!} 1_{z > u \ge c_n} 1_{r \le n-1}
+ \phi_j(u) 1_{M \ge u \ge -M} 1_{r=n}, \\
\label{eqphi0n+1}
A_{ij}
& = & \int_{-M}^M dz \; \phi_j(z) \frac{(z - c_i)^{n-2+1_{i=1}}}{(n-2+1_{i=1})!},
\end{eqnarray}
for all $r,s,i,j \in \{1,\ldots,n\}$ and $u,v \in \R$.

\begin{prop}
\label{prCnCnDet}
Letting $\Phi_n : \R^n \to \R$ be that given in equation (\ref{eqPhin}), and $M > 0$ be that
given in equation (\ref{eqMeasCnCn1}), define $B_n^{(M)} := \int_{[-M,M]^n} dz \; \Phi_n(z)$.
Then $B_n^{(M)} > 0$, and the random point field $((\R^n)^n,\xi_n)$ is determinantal with
correlation kernel $J_n : (\{1,\ldots,n\} \times \R)^2 \to \R$, which satisfies
\begin{eqnarray*}
J_n ((r,u),(s,v))
& = & \frac1{B_n^{(M)}} \int_{\S_n \CC_n^{(M)}} dz \; \Phi_n(z)
\sum_{j=1}^n 1_{v \le u < z_j} \frac{(z_j-u)^{n-r-1}}{(n-r-1)!}
\frac{\partial^{n-s}}{\partial v^{n-s}} \prod_{i \neq j} \left( \frac{v - z_i}{z_j - z_i} \right) \\
& - & \frac1{B_n^{(M)}} \int_{\S_n \CC_n^{(M)}} dz \; \Phi_n(z)
\sum_{j=1}^n 1_{v > u \ge z_j} \frac{(z_j-u)^{n-r-1}}{(n-r-1)!}
\frac{\partial^{n-s}}{\partial v^{n-s}} \prod_{i \neq j} \left( \frac{v - z_i}{z_j - z_i} \right),
\end{eqnarray*}
for all $r \in \{1,\ldots,n-1\}$, $s \in \{1,\ldots,n\}$ and $u,v \in (-M,M)$, where
$\CC_n^{(M)} := \CC_n \cap [-M,M]^n$.
\end{prop}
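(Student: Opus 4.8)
The plan is to recognise, using Lemma \ref{lemMeasdetGT}, that $\xi_n$ is a product of transition‑matrix determinants with frozen bottom and top boundaries, $\frac1{(n!)^nZ}\prod_{r=0}^n\det[\phi_{r,r+1}(z_j^{(r)},z_k^{(r+1)})]_{j,k=1}^n$ with $z^{(0)}=z^{(n+1)}=(1,\dots,n)$, and then to run the standard Eynard--Mehta argument: integrate out the intermediate levels $z^{(1)},\dots,z^{(n)}$ one at a time via the Cauchy--Binet identity (Proposition 2.10 of Johansson, \cite{Joh06b}) --- the very computation already used to show $Z=\det A$ --- which identifies all correlation functions as determinants of a single kernel; this is the ``simplified'' route to the result also contained in Defosseux, \cite{Man08}. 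One obtains that $((\R^n)^n,\xi_n)$ is determinantal with a correlation kernel of the form
$$
J_n((r,u),(s,v)) \;=\; -\,\phi_{r,s}(u,v)\;+\;\sum_{k,l=1}^n\phi_{r,n+1}(u,l)\,(A^{-1})_{lk}\,\phi_{0,s}(k,v),
$$
valid whenever $\det A\ne0$, where $\phi_{r,s}$, $\phi_{r,n+1}$, $\phi_{0,s}$ and $A_{ij}=\phi_{0,n+1}(i,j)$ are the convolved kernels computed explicitly in \eqref{eqphirs}--\eqref{eqphi0n+1}.

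It remains to settle non‑degeneracy and to reduce this kernel to the stated form. Since $\xi_n$ is a probability measure, $Z>0$, and $Z=\det A$, so the Eynard--Mehta formula applies. For the claim $B_n^{(M)}>0$, antisymmetrising the integrand rewrites $B_n^{(M)}=\int_{[-M,M]^n}\Phi_n(z)\,dz$ as $\int_{\CC_n\cap[-M,M]^n}\Delta_n(z)\det[\phi_i(z_j)]_{i,j}\,dz$; this is nonnegative because $\Delta_n>0$ on the ordered chamber and the density $\det[\phi_i(\cdot)]$ of $\nu_n$ on $\CC_n$ is nonnegative, and it is strictly positive once $M$ is large, since as $M\to\infty$ it converges to a positive multiple of the normalising constant $Z_n$ of $\nu_n$.

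The substantive step is the algebraic reduction of the Eynard--Mehta kernel to the stated integral. I would expand $A^{-1}$ by cofactors, so that the contractions $\sum_l\phi_{r,n+1}(u,l)(A^{-1})_{lk}$ and $\sum_k(\cdot)\,\phi_{0,s}(k,v)$ become $(\det A)^{-1}$ times a determinant of $A$ with one row and one column replaced by the corresponding data; then apply the Cauchy--Binet‑type identity $\int_{[-M,M]^n}\prod_m\phi_m(z_m)\det[h_i(z_m)]_{i,m}\,dz=\det[\int\phi_m h_i]_{i,m}$ to convert these determinants of integrals into integrals over $[-M,M]^n$ against $\prod_m\phi_m(z_m)$ of determinants of the underlying polynomials. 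This produces the weight $\Phi_n(z)$ and, applied to $\det A$ itself, yields $\det A=\kappa\,B_n^{(M)}$ with $\kappa$ the fixed nonzero constant defined by $\det[g_i(z_m)]_{i,m}=\kappa\,\Delta_n(z)$, where $g_i(z)=(z-c_i)^{n-2+1_{i=1}}/(n-2+1_{i=1})!$ (the $g_i$ form a basis of the polynomials of degree $\le n-1$: the $n-1$ powers $(z-c_i)^{n-2}$, $i\ge2$, are independent by a Vandermonde argument and $g_1$ supplies degree $n-1$). Hence $\kappa$ cancels, and since the $c_i$ enter only through this choice of basis the auxiliary parameters $c_2,\dots,c_n$ drop out. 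Finally I would use the Vandermonde cofactor identity $\Delta_n(z)=\pm\big(\prod_{i\ne j}(z_j-z_i)\big)\,\Delta_{n-1}\big((z_m)_{m\ne j}\big)$ together with Lagrange interpolation to recognise the $v$‑dependence as $\frac{\partial^{n-s}}{\partial v^{n-s}}\prod_{i\ne j}\frac{v-z_i}{z_j-z_i}$ and the $u$‑dependence as $1_{u<z_j}\frac{(z_j-u)^{n-r-1}}{(n-r-1)!}$ (here the indicator $1_{z>u\ge c_n}$ in $\phi_{r,n+1}$ reduces to $1_{z>u}$ for $u\in(-M,M)$, because $c_n<c_1=-M$), and then the identity $1_{v\le u<z_j}-1_{v>u\ge z_j}=1_{u<z_j}-1_{v>u}$ together with one more Lagrange interpolation --- legitimate since $w\mapsto(w-u)^{n-r-1}/(n-r-1)!$ has degree $\le n-1$ --- to check that the $-1_{v>u}$ part reassembles into exactly $-\phi_{r,s}(u,v)$, matching the first Eynard--Mehta term, while the $1_{u<z_j}$ part accounts for the double sum. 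The main obstacle is this last stretch: keeping the combinatorial bookkeeping straight, in particular handling the inhomogeneous matrix $A$ whose rows are polynomials of degrees $n-1,n-2,\dots,n-2$ (because level $1$ carries the $n-1$ pinned particles $c_2,\dots,c_n$), and verifying that all the $c_i$‑dependence and all the Vandermonde signs cancel to leave precisely the stated Lagrange‑interpolation form for $r\in\{1,\dots,n-1\}$ and $s\in\{1,\dots,n\}$.
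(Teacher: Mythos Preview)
Your proposal is correct and follows essentially the same route as the paper: invoke the Eynard--Mehta machinery (the paper cites this as Proposition~2.13 of Johansson, \cite{Joh06b}) to get $J_n=-\phi_{r,s}+\sum_{i,j}\phi_{0,s}(i,v)(A^{-1})_{ij}\phi_{r,n+1}(u,j)$, expand $A^{-1}$ by cofactors, convert the resulting determinants of integrals into integrals against $\Phi_n$, and use Lagrange interpolation to absorb the $-\phi_{r,s}$ term. The only notable difference is in the positivity of $B_n^{(M)}$: the paper obtains it as a by-product of the explicit computation $\det A=\tfrac1{(n-1)!}\bigl(\prod_{k=0}^{n-2}\tfrac1{k!}\bigr)^2\Delta_{n-1}(c_2,\dots,c_n)\,B_n^{(M)}$ together with $\det A=Z>0$, whereas you argue it separately by antisymmetrisation and a limit; both are fine, but the paper's route also identifies your constant $\kappa$ explicitly and makes the cancellation of the $c_i$ immediate.
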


\begin{proof}
The fact that $((\R^n)^n,\xi_n)$ is determinantal follows from Lemma
\ref{lemMeasdetGT} and Proposition 2.13 of Johansson, \cite{Joh06b}. A correlation
kernel is given by
\begin{equation}
\label{eqprCnCnDet1}
J_n ((r,u),(s,v)) = - \phi_{r,s} (u,v) + \tilde J_n ((r,u),(s,v)),
\end{equation}
for all $r,s \in \{1,\ldots,n\}$ and $u,v \in \R$, where
\begin{eqnarray*}
\tilde J_n ((r,u),(s,v))
%& = & \sum_{i,j=1}^n \phi_{r,n+1} (u,z_j^{n+1})
%(A^{-1})_{ji} \phi_{0,s} (z_i^0,v) \\
& = & \sum_{i,j=1}^n (-1)^{i+j} \phi_{0,s} (i,v)
\frac{\det A (i,j)}{\det A} \phi_{r,n+1} (u,j),
\end{eqnarray*}
and $A(i,j) \in \C^{(n-1) \times (n-1)}$ is the sub-matrix of $A$ obtained by
removing row $i$ and column $j$.

Fix $r \in \{1,\ldots,n-1\}$, $s \in \{1,\ldots,n\}$ and $u,v \in (-M,M)$. First note equation
(\ref{eqphirs}) gives
\begin{equation}
\label{eqphirs2}
\phi_{r,s} (u,v)
%= 1_{v > u} 1_{s > r} \frac{(v-u)^{s-r-1}}{(s-r-1)!}
= 1_{v > u} \frac{\partial^{n-s}}{\partial v^{n-s}} \frac{(v - u)^{n-r-1}}{(n-r-1)!}
= 1_{v > u} \frac{\partial^{n-s}}{\partial v^{n-s}}
\sum_{j=1}^n \frac{(z_j - u)^{n-r-1}}{(n-r-1)!}
\prod_{i \neq j} \left( \frac{v - z_i}{z_j - z_i} \right),
\end{equation}
for any $z \in \S_n \CC_n$, where the last step follows from Lagrange interpolation.
Also equations (\ref{eqphi0s}) and (\ref{eqphirn+1}) give
\begin{equation}
\label{eqtildeJn}
\tilde J_n ((r,u),(s,v))
= \frac{\partial^{n-s}}{\partial v^{n-s}}
\sum_{j=1}^n \frac{\det A^{(j,v)}}{\det A}
\int_{-M}^M dz_j \; \phi_j(z_j) \frac{(z_j-u)^{n-r-1}}{(n-r-1)!} 1_{z_j>u},
\end{equation}
where $A^{(j,v)} \in \C^{n \times n}$ is $A$ with column $j$ replaced by
$(\phi_{0,n} (1,v),\ldots,\phi_{0,n} (n,v))^T$. This can be verified by taking
a cofactor expansion of $\det A^{(j,v)}$ along column $j$.
%
%To see this note, since $v \in (-M,M)$, equation (\ref{eqphi0s}) gives
%$$
%\phi_{0,s} (i,v)
%\; = \; \frac{(v - c_i)^{s-2+1_{i=1}}}{(s-2+1_{i=1})!}
%\; = \; \frac{\partial^{n-s}}{\partial v^{n-s}} \frac{(v - c_i)^{n-2+1_{i=1}}}{(n-2+1_{i=1})!}
%\; = \; \frac{\partial^{n-s}}{\partial v^{n-s}} \phi_{0,n} (i,v).
%$$
%Therefore
%\begin{eqnarray*}
%\tilde J_n ((r,u),(s,v))
%& = & \sum_{i,j=1}^n (-1)^{i+j} \phi_{0,s} (i,v)
%\frac{\det A (i,j)}{\det A} \phi_{r,n+1} (u,j) \\
%& = & \sum_{i,j=1}^n (-1)^{i+j} \frac{\partial^{n-s}}{\partial v^{n-s}} \phi_{0,n} (i,v)
%\frac{\det A(i,j)}{\det A}
%\int_{-M}^M dz_j \; \phi_j(z_j) \frac{(z_j-u)^{n-r-1}}{(n-r-1)!} 1_{z_j>u},
%\end{eqnarray*}
%
Moreover equation (\ref{eqphi0n+1}) gives
$$
\det A
= \int_{[-M,M]^n} dz \; \sum_{l_1=0}^{n-1} \; \sum_{l_2,\ldots,l_n=0}^{n-2} 
\left( \prod_{k=1}^n \frac{ \phi_k(z_k) \; (-c_k)^{n-2+1_{k=1}-l_k}}
{(l_k)! \; (n-2+1_{k=1}-l_k)!} \right) \det \left[ (z_j)^{l_i} \right]_{i,j=1}^n.
$$
%
%True since
%\begin{eqnarray*}
%A_{ij}
%& = & \int_{-M}^M dz_j \; \phi_j(z_j) \frac{(z_j - c_i)^{n-2+1_{i=1}}}{(n-2+1_{i=1})!} \\
%& = & \int_{-M}^M dz_j \; \phi_j(z_j) \sum_{l_i=1}^{n-2+1_{i=1}}
%\frac{(z_j)^{l_i} \; (-c_i)^{n-2+1_{i=1}-l_i}}{(l_i)! \; (n-2+1_{i=1}-l_i)!}.
%\end{eqnarray*}
%
The only non-zero terms in the above sum are those for which $l_1,\ldots,l_n$ are
distinct. Equation (\ref{eqVanDet}) then gives
$$
\det A = \frac1{(n-1)!} \left( \prod_{k=0}^{n-2} \frac1{k!} \right)^2
\Delta_{n-1}(c_2,\ldots,c_n) \; B_n^{(M)},
$$
where $B_n^{(M)}$ is defined in the statement of the Proposition.
Therefore $B_n^{(M)} > 0$, since $(c_1,\ldots,c_n) \in \CC_n$ and $\det A = Z$, where $Z > 0$
is the normalisation constant in equation (\ref{eqMeasCnCn1}). Similarly
$$
\det A^{(j,v)} = \frac1{(n-1)!} \left( \prod_{k=0}^{n-2} \frac1{k!} \right)^2
\Delta_{n-1}(c_2,\ldots,c_n)
\int_{[-M,M]^{n-1}} \left( \prod_{k \neq j} \phi_k(z_k) \; dz_k \right) \Delta_n(z^{(j,v)}),
$$
for all $j \in \{1,\ldots,n\}$, where $z^{(j,v)} := (z_1,\ldots,z_{j-1},v,z_{j+1},\ldots,z_n)$.
Equations (\ref{eqVanDet}), (\ref{eqPhin}) and (\ref{eqtildeJn}) thus give
$$
\tilde J_n ((r,u),(s,v))
= \frac1{B_n^{(M)}} \frac{\partial^{n-s}}{\partial v^{n-s}}
\int_{\S_n \CC_n^{(M)}} dz \; \Phi_n(z) \sum_{j; z_j > u} \frac{(z_j-u)^{n-r-1}}{(n-r-1)!}
\prod_{i \neq j} \left( \frac{v - z_i}{z_j - z_i} \right).
$$
%
%To see first note,
%$$
%\frac{\det A^{(j,v)}}{\det A} = \frac1{B_n^{(M)}}
%\int_{[-M,M]^{n-1}} \left( \prod_{k \neq j} \phi_k(z_k) \; dz_k \right) \Delta_n(z^{(j,v)}).
%$$
%Therefore
%\begin{eqnarray*}
%\tilde J_n ((r,u),(s,v))
%& = & \frac{\partial^{n-s}}{\partial v^{n-s}}
%\sum_{j=1}^n \frac{\det A^{(j,v)}}{\det A}
%\int_{-M}^M dz_j \; \phi_j(z_j) \frac{(z_j-u)^{n-r-1}}{(n-r-1)!} 1_{z_j>u} \\
%& = & \frac1{B_n^{(M)}} \frac{\partial^{n-s}}{\partial v^{n-s}}
%\sum_{j=1}^n \int_{[-M,M]^{n-1}} \left( \prod_{k \neq j} \phi_k(z_k) \; dz_k \right) \Delta_n(z^{(j,v)})
%\int_{-M}^M dz_j \; \phi_j(z_j) \frac{(z_j-u)^{n-r-1}}{(n-r-1)!} 1_{z_j>u} \\
%& = & \frac1{B_n^{(M)}} \frac{\partial^{n-s}}{\partial v^{n-s}}
%\int_{[-M,M]^n} dz \left( \prod_k \phi_k (z_k) \right)
%\sum_{j; z_j > u} \frac{(z_j-u)^{n-r-1}}{(n-r-1)!} \Delta_n(z^{(j,v)}).
%\end{eqnarray*}
%The result follows by noting that the integrand has support in $\S_n \CC_n^{(M)}$.
%To see this suppose that $z_i = z_k = z$
%some $i<k$. Then
%$$
%\sum_{j=1}^n (z_j - u)^{n-r-1} 1_{z_j > u} \Delta_n(z^{(j,v)})
%= (z - u)^{n-r-1} 1_{z > u} ( \Delta_n(z^{(i,u)}) + \Delta_n(z^{(k,u)}) ),
%$$
%and
%\begin{eqnarray*}
%z^{(i,v)} = (z_1,\ldots,z_{i-1},v,z_{i+1},\ldots,z_{k-1},z,z_{k+1},\ldots,z_n), \\
%z^{(k,u)} = (z_1,\ldots,z_{i-1},z,z_{i+1},\ldots,z_{k-1},v,z_{k+1},\ldots,z_n).
%\end{eqnarray*}
%
Equations (\ref{eqprCnCnDet1}) and (\ref{eqphirs2}) then give the required result.
%
%First recall that
%$$
%\phi_{r,s} (u,v)
%= 1_{v > u} \frac{\partial^{n-s}}{\partial v^{n-s}}
%\sum_{j=1}^n \frac{(z_j - u)^{n-r-1}}{(n-r-1)!}
%\prod_{i \neq j} \left( \frac{v - z_i}{z_j - z_i} \right),
%$$
%for any $z \in \S_n \CC_n$. Therefore
%$$
%B_n^{(M)} \phi_{r,s} (u,v)
%= 1_{v > u} \; \frac{\partial^{n-s}}{\partial v^{n-s}}
%\int_{[-M,M]^n} \; \Phi_n(z) \sum_{j=1}^n \frac{(z_j - u)^{n-r-1}}{(n-r-1)!}
%\prod_{i \neq j} \left( \frac{v - z_i}{z_j - z_i} \right),
%$$
%since $B_n^{(M)} := \int_{[-M,M]^n} \; \Phi_n(z)$. Also
%$$
%\tilde J_n ((r,u),(s,v))
%= \frac1{B_n^{(M)}} \frac{\partial^{n-s}}{\partial v^{n-s}}
%\int_{[-M,M]^n} \; \Phi_n(z) \sum_{j; z_j > u} \frac{(z_j-u)^{n-r-1}}{(n-r-1)!}
%\prod_{i \neq j} \left( \frac{v - z_i}{z_j - z_i} \right).
%$$
%Finally recall that $J_n ((r,u),(s,v)) = - \phi_{r,s} (u,v) + \tilde J_n ((r,u),(s,v))$.
%Thus, whenever $v \le u$,
%$$
%J_n ((r,u),(s,v))
%= \frac1{B_n^{(M)}} \frac{\partial^{n-s}}{\partial v^{n-s}}
%\int_{[-M,M]^n} \; \Phi_n(z) \sum_{j; z_j > u} \frac{(z_j-u)^{n-r-1}}{(n-r-1)!}
%\prod_{i \neq j} \left( \frac{v - z_i}{z_j - z_i} \right),
%$$
%and, whenever $v > u$,
%$$
%J_n ((r,u),(s,v))
%= - \frac1{B_n^{(M)}} \frac{\partial^{n-s}}{\partial v^{n-s}}
%\int_{[-M,M]^n} \; \Phi_n(z) \sum_{j; z_j < u} \frac{(z_j-u)^{n-r-1}}{(n-r-1)!}
%\prod_{i \neq j} \left( \frac{v - z_i}{z_j - z_i} \right).
%$$
%
\end{proof}

We are now in a position to give a proof of Theorem \ref{thmGTDet}:
\begin{proof}[Proof of Theorem \ref{thmGTDet}]
For each $(c_1,\ldots,c_n) \in \CC_n$ with $c_1 = - M < 0$, using superscripts to emphasise
the dependence on $(c_1,\ldots,c_n)$, Proposition \ref{prCnCnDet} implies that
$((\R^n)^n,\xi_n^{(c_1,\ldots,c_n)})$ is a determinantal random point field with correlation
kernel $J_n^{(c_1,\ldots,c_n)} : (\{1,\ldots,n\} \times \R)^2 \to \R$.
When restricted to the domain $(\{1,\ldots,n-1\} \times (-M,M)) \times
(\{1,\ldots,n\} \times (-M,M))$, this kernel depends on $M$ and does not depend on $c_2,\ldots,c_n$.
Also it follows from equations (\ref{eqMeasCnCn0}) and (\ref{eqMeasCnCn1}) that
$\xi_n^{(c_1,\ldots,c_n)}$ induces the probability measure on $\GT_n$ given by
$$
\nu_n^{(M)} [A]
:= \frac{\nu_n [ A \cap ( \CC_1^{(M)} \times \cdots \times \CC_n^{(M)} ) ]}
{\nu_n [ \GT_n \cap ( \CC_1^{(M)} \times \cdots \times \CC_n^{(M)} ) ]},
$$
for all $A \subset \GT_n$ measurable, where $\CC_r^{(M)} = \CC_r \cap [-M,M]^r$
for all $r \in \{1,\ldots,n\}$.
%
%To see this recall that $\nu_n$ is the measure on $\GT_n$ given by
%$$
%d\nu_n[y^{(1)},\ldots,y^{(n)}] := \frac1{Z_n}
%\det \left[ \phi_i(y_j^{(n)}) \right]_{i,j=1}^n dy^{(n)} \ldots dy^{(1)},
%$$
%for all $(y^{(1)},\ldots,y^{(n)}) \in \GT_n$, and $\xi_n^{(c)}$ is the measure
%on $(\R^n)^n$ given by
%$$
%d\xi_n^{(c)}[\bar z] := \frac1{(n!)^n Z^{(c)}}
%\begin{cases}
%\det \left[ \phi_i(z_{\s(j)}^{(n)}) \right]_{i,j=1}^n dz^{(n)} \ldots dz^{(1)}
%& ; \bar{z} \in E^{(c)} \mbox{ with } z^{(n)} \in \s^{-1} (\CC_n), \\
%0 & ; \bar{z} \in (\R^n)^n \setminus E^{(c)}.
%\end{cases}
%$$
%where the particles on neighbouring levels interlace in the
%sense of equation (\ref{eqIntParFn}), and $z^{(1)}_{\tau(j)} = c_j$ for all
%$j \in \{2,\ldots,n\}$.
%
Therefore $(\GT_n,\nu_n^{(M)})$ is a determinantal random point field with correlation kernel
$K_n^{(M)} : (\{1,\ldots,n\} \times (-M,M))^2 \to \R$ which satisfies
$K_n^{(M)} = J_n^{(c_1,\ldots,c_n)}$ in the domain $(\{1,\ldots,n-1\} \times (-M,M)) \times
(\{1,\ldots,n\} \times (-M,M))$. The required result follows by letting $M \to \infty$.
\end{proof}

We finish this section by obtaining useful contour integral expressions for the kernel in
Theorem \ref{thmGTDet}:
\begin{prop}
\label{prIntExpKl}
For all $r \in \{1,\ldots,n-2\}$, $s \in \{1,\ldots,n\}$, and $u,v \in \R$,
\begin{eqnarray*}
\lefteqn{K_n ((r,u),(s,v))
\; = \; \frac1{(2\pi)^2} \frac{(n-s)!}{(n-r-1)!}  \frac1{B_n} \int_{\S_n \CC_n} dy \;
\Phi_n(y) \times} \\
& \times & \int_{\g(u,v,y)} dw \int_{\G(u,v,y)} dz \; \frac{(z - u)^{n-r-1}}{(w - v)^{n-s+1}}
\frac1{w-z} \prod_{i=1}^n \left( \frac{w - y_i}{z - y_i} \right).
\end{eqnarray*}
Here $\g(u,v,y)$ is a counter-clockwise simple closed contour around $v$. Whenever $v \le u$,
$\G(u,v,y)$ is a clockwise simple closed contour which passes through $u$, contains
$\{y_j : y_j > u \}$ and does not contain $\{y_j: y_j < u\}$. Whenever $v > u$,
$\G(u,v,y)$ is a counter-clockwise simple closed contour which passes through $u$, contains
$\{y_j : y_j < u \}$ and does not contain $\{y_j: y_j > u\}$. Finally the contours do not intersect.
This holds with the understanding that $(z - u)^{n-r-1} \prod_i (\frac1{z - y_i}) = (z - u)^{n-r-2}
\prod_{i \neq k} (\frac1{z - y_i})$ whenever $u=y_k$ for some $k \in \{1,\ldots,n\}$.

Also for all $r \in \{1,\ldots,n-1\}$ and $u,v \in \R$,
\begin{eqnarray*}
\lefteqn{K_n ((r,u),(r,v))
\; = \; \frac1{(2\pi)^2} \frac1{B_n} \int_{\S_n \CC_n} dy \;
\Phi_n(y) \times} \\
& \times & \int_\g dw \int_{\G(u,v,y)} dz
\left( \frac{(z + v - u)^{n-r} - z^{n-r}}{(v - u) w^{n-r+1}} \right)
\sum_{j=1}^n \frac{v - y_j}{(z + v - y_j)^2} \prod_{i \neq j}
\left( \frac{w + v - y_i}{z + v - y_i} \right).
\end{eqnarray*}
Here $\g$ is a counter-clockwise simple closed contour around $0$. Whenever $v \le u$,
$\G(u,v,y)$ is a clockwise simple closed contour in
$\C \setminus \{y_1 - v, \ldots, y_n - v\}$ which contains $\{y_j - v : y_j > u \}$ and does not
contain $\{y_j - v : y_j \le u\}$. Whenever $v > u$, $\G(u,v,y)$ is a counter-clockwise simple closed
contour in $\C \setminus \{y_1 - v, \ldots, y_n - v\}$ which contains $\{y_j - v : y_j \le u \}$ and
does not contain $\{y_j - v : y_j > u\}$. This holds with the understanding that
$\frac{(z + v - u)^{n-r} - z^{n-r}}{v - u} = (n-r) z^{n-r-1}$ whenever $u = v$.
\end{prop}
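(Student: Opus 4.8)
The plan is to start from the closed form for $K_n((r,u),(s,v))$ furnished by Theorem~\ref{thmGTDet} and, for each fixed $y \in \S_n \CC_n$, rewrite the finite sum over $j$ and the $v$-derivative as contour integrals using two classical tools. First, Lagrange interpolation (as already exploited in equation~(\ref{eqphirs2})) together with residue calculus: for fixed $u$ and $y$, the polynomial $v \mapsto \sum_{j \in J}\frac{(y_j-u)^{n-r-1}}{(n-r-1)!}\prod_{i\neq j}\frac{v-y_i}{y_j-y_i}$, where $J=\{j:y_j>u\}$ when $v\le u$ and $J=\{j:y_j<u\}$ when $v>u$, is exactly the sum of the residues at $z=y_j$, $j\in J$, of $z\mapsto\frac{(z-u)^{n-r-1}}{(n-r-1)!}\,\frac1{v-z}\prod_i\frac{v-y_i}{z-y_i}$, hence equals a $z$-contour integral over a simple closed curve $\G$ enclosing exactly $\{y_j:j\in J\}$; the point $z=u$ is harmless because $(z-u)^{n-r-1}$ vanishes there for $r\le n-2$, and in the coincidence $u=y_k$ the factor is absorbed as indicated by the ``understanding'' in the statement. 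Second, Cauchy's differentiation formula, $\frac{\partial^{n-s}}{\partial v^{n-s}}P(v)=\frac{(n-s)!}{2\pi i}\oint_{\g}\frac{P(w)}{(w-v)^{n-s+1}}\,dw$ for any simple closed $\g$ around $v$ (legitimate since $P$ is a polynomial). Substituting the $z$-representation of $P(w)$ inside this $w$-integral — valid provided $\g$ lies outside $\G$, so that $w$ is never enclosed by $\G$ — yields the double contour integral asserted in the first display, with the constant $\frac{(n-s)!}{(n-r-1)!}$ emerging from the product of the two normalisations. Since this is an identity of the integrand for each fixed $y$, one then integrates against $\frac1{B_n}\Phi_n(y)\,dy$ over $\S_n\CC_n$ to recover $K_n$; no interchange of integrals is needed because the contour identity is established pointwise in $y$, with $\g(u,v,y)$ and $\G(u,v,y)$ allowed to depend on $y$.

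The second display, for the diagonal level $s=r$, rests on the same two tools but needs separate treatment, both because it must also cover $r=n-1$ (excluded from the first display, where the coincidence $u=y_k$ would place an unavoidable pole on the $z$-contour) and because the form written there — with the $w$-contour around $0$ and the $y_i$-poles appearing as $z+v-y_i$ — is the one adapted to the saddle-point analysis of Section~\ref{secPOTMR}. Starting again from Theorem~\ref{thmGTDet} with $s=r$, I would apply Cauchy's formula for $\frac{\partial^{n-r}}{\partial v^{n-r}}$, recentre the contours so that $\g$ surrounds $0$ and $\G$ surrounds $\{y_j-v:j\in J\}$, and then rewrite the resulting kernel: a partial-fraction identity for $\frac{v-y_j}{(z+v-y_j)(w+v-y_j)}$ summed over $j$ produces the double pole $\sum_j\frac{v-y_j}{(z+v-y_j)^2}\prod_{i\neq j}\frac{w+v-y_i}{z+v-y_i}$, while the binomial expansion of the shifted power $(z+v-u)^{n-r}$ produces the finite-difference numerator $(z+v-u)^{n-r}-z^{n-r}$ together with the factor $w^{-(n-r+1)}$; the coincidence $u=v$ is covered by the stated limit $\frac{(z+v-u)^{n-r}-z^{n-r}}{v-u}\to(n-r)z^{n-r-1}$. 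The case $r=n-1$ is then simply $n-r=1$, where this numerator collapses to $v-u$ and the first factor reduces to $w^{-2}$, and the coincidence $u=y_k$ causes no difficulty because $\G$ is only required to avoid, enclose, or exclude the shifted points $y_j-v$.

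The main obstacle I expect is not conceptual but the orientation and contour-placement bookkeeping. One must check that, for each fixed $y$, a single $z$-contour $\G(u,v,y)$ can be drawn separating $\{y_j:y_j>u\}$ from $\{y_j:y_j<u\}$ and passing through $u$; that it can be nested with a small $\g$ around $v$ (resp.\ around $0$ after the recentring) without intersection and with $w$ kept outside $\G$; and that the clockwise-versus-counter-clockwise choices in the two cases $v\le u$ and $v>u$, combined with the $\frac1{(2\pi)^2}$ (rather than $\frac1{(2\pi i)^2}$) normalisation, reproduce exactly the signs of Theorem~\ref{thmGTDet}. A secondary point to verify is that the $z$-contour representation of $P(w)$ holds uniformly for $w$ ranging over $\g$, i.e.\ that shrinking $\g$ towards $v$ (or towards $0$) keeps it disjoint from the closed interior of $\G$; this is precisely where the hypothesis that the contours do not intersect enters, and it is also the algebraic rearrangement producing the double-pole form that I regard as the least routine computation in the second display.
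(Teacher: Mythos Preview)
Your treatment of the first display is correct and essentially matches the paper's argument: both start from Theorem~\ref{thmGTDet}, recognise the derivative $\frac{\partial^{n-s}}{\partial v^{n-s}}\prod_{i\neq j}\frac{v-y_i}{y_j-y_i}$ via the elementary symmetric polynomial (or equivalently by Cauchy's formula for the $w$-integral), and then pick up the sum over $j\in J$ as residues of the $z$-integrand over a contour enclosing exactly $\{y_j:j\in J\}$.

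For the second display, however, your plan has a genuine gap. You write that ``a partial-fraction identity for $\frac{v-y_j}{(z+v-y_j)(w+v-y_j)}$ summed over $j$ produces the double pole'', but no purely algebraic partial-fraction manipulation will take you from the integrand $\frac{(z+v-u)^{n-r-1}}{w^{n-r+1}}\frac{1}{w-z}\prod_i\frac{w+v-y_i}{z+v-y_i}$ to the target $\frac{(z+v-u)^{n-r}-z^{n-r}}{(v-u)w^{n-r+1}}\sum_j\frac{v-y_j}{(z+v-y_j)^2}\prod_{i\neq j}\frac{w+v-y_i}{z+v-y_i}$. The crucial feature of the target is that the $\frac{1}{w-z}$ singularity has \emph{disappeared}; this is precisely what allows $\gamma$ to be ``any'' contour around $0$, unconstrained by $\Gamma$, and it is what makes the second display usable at $r=n-1$ and in the saddle-point analysis. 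You cannot remove that pole by rearranging rational functions.

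The paper's device is a scaling trick. After shifting to $\gamma$ around $0$, expand $(z+v-u)^{n-r-1}$ binomially so that the integral becomes $\sum_{m=1}^{n-r}\binom{n-r}{m-1}(v-u)^{m-1}F_m$ with $F_m=\int_\gamma\int_\Gamma \frac{z^{n-r-m}}{w^{n-r+1}}\frac{1}{w-z}\prod_i\frac{w+v-y_i}{z+v-y_i}$. Now observe that simultaneously dilating both contours by a factor $b$ near $1$ leaves each $F_m$ invariant; the change of variables shows $F_m = b^{-m}\int_\gamma\int_\Gamma \frac{z^{n-r-m}}{w^{n-r+1}}\frac{1}{w-z}\prod_i\frac{bw+v-y_i}{bz+v-y_i}$. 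Differentiating this identity in $b$ at $b=1$ yields
\[
mF_m \;=\; \int_\gamma\int_\Gamma \frac{z^{n-r-m}}{w^{n-r+1}}\sum_j\frac{v-y_j}{(z+v-y_j)^2}\prod_{i\neq j}\frac{w+v-y_i}{z+v-y_i},
\]
the $\frac{1}{w-z}$ having been cancelled by the $(w-z)$ that drops out of $\frac{d}{db}\prod_i\frac{bw+v-y_i}{bz+v-y_i}\big|_{b=1}$. The extra factor $\frac{1}{m}$ then combines with $\binom{n-r}{m-1}$ to give $\frac{1}{n-r}\binom{n-r}{m}$, and re-summing in $m$ produces the numerator $(z+v-u)^{n-r}-z^{n-r}$. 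This differentiation-in-a-scaling-parameter step is the non-routine idea your proposal is missing.
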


\begin{proof}
For all $r \in \{1,\ldots,n-1\}$, $s \in \{1,\ldots,n\}$ and $u,v \in \R$,
Theorem (\ref{thSine}) gives
\begin{equation}
\label{eqprIntExpKl1}
K_n ((r,u),(s,v)) = \frac1{(2\pi)^2} \frac{(n-s)!}{(n-r-1)!}  \frac1{B_n} \int_{\S_n \CC_n} dy \;
\Phi_n(y) \; G_{r,s}^{(u,v)} (y),
\end{equation}
where $G_{r,s}^{(u,v)} : \S_n \CC_n \to \R$ is given by
\begin{eqnarray}
\nonumber
G_{r,s}^{(u,v)} (y)
& := & (2\pi)^2 \sum_{j=1}^n 1_{v \le u < y_j} (y_j - u)^{n-r-1}
e_{s-1} \left( v - y_1, \p{v - y_j}, v - y_n \right) \prod_{i \neq j} \left( \frac1{y_j - y_i} \right) \\
\label{eqGrsuvy0}
& - & (2\pi)^2 \sum_{j=1}^n 1_{v > u \ge y_j} (y_j - u)^{n-r-1}
e_{s-1} \left( v - y_1, \p{v - y_j}, v - y_n \right) \prod_{i \neq j} \left( \frac1{y_j - y_i} \right),
\end{eqnarray}
for all $y \in \S_n \CC_n$. Here $e_{s-1}$ is the elementary symmetric polynomial of degree $s-1$.
%
%To see this note Theorem (\ref{thSine}) gives
%\begin{eqnarray*}
%G_{r,s}^{(u,v)} (y)
%& = & (2\pi)^2 \frac{(n-r-1)!}{(n-s)!}
%\sum_{j=1}^n 1_{v \le u < y_j} \frac{(y_j - u)^{n-r-1}}{(n-r-1)!}
%\frac{\partial^{n-s}}{\partial v^{n-s}} \prod_{i \neq j} \left( \frac{v - y_i}{y_j - y_i} \right) \\
%& - & (2\pi)^2 \frac{(n-r-1)!}{(n-s)!}
%\sum_{j=1}^n 1_{v > u > y_j} \frac{(y_j - u)^{n-r-1}}{(n-r-1)!}
%\frac{\partial^{n-s}}{\partial v^{n-s}} \prod_{i \neq j} \left( \frac{v - y_i}{y_j - y_i} \right).
%\end{eqnarray*}
%The result follows since,
%\begin{eqnarray*}
%\frac{\partial^{n-s}}{\partial v^{n-s}} \prod_{i \neq j} (v - y_i)
%& = & \frac{\partial^{n-s}}{\partial c^{n-s}}
%\left. \prod_{i \neq j} (c - y_i) \right|_{c=v}
%\; = \; \frac{\partial^{n-s}}{\partial c^{n-s}}
%\left. \prod_{i \neq j} (c - v + v - y_i) \right|_{c=v} \\
%& = & \frac{\partial^{n-s}}{\partial c^{n-s}}
%\left. \sum_{k=0}^{n-1} (c - v)^k e_{n-1-k} \left( v - y_1, \p{v - y_j}, v - y_n \right) \right|_{c=v} \\
%& = & \left. \sum_{k = n-s}^{n-1} \frac{k!}{(k-n+s)!} (c - v)^{k-n+s}
%e_{n-1-k} \left( v - y_1, \p{v - y_j}, v - y_n \right) \right|_{c=v} \\
%& = & (n-s)! e_{s-1} \left( v - y_1, \p{v - y_j}, v - y_n \right).
%\end{eqnarray*}
%
Then, whenever $r \le n-2$, the residue Theorem gives the first part of the result.
%
%Consider the case where $v \le u$. Then equation (\ref{eqGrsuvy0}) gives
%\begin{eqnarray*}
%\lefteqn{G_{r,s}^{(u,v)} (y)} \\
%& = & (2\pi)^2 \sum_{j=1}^n 1_{y_j > u} (y_j - u)^{n-r-1}
%e_{s-1} \left( v - y_1, \p{v - y_j}, v - y_n \right) \prod_{i \neq j} \left( \frac1{y_j - y_i} \right) \\
%& = & - 2\pi i \sum_{j ; y_j > u } (y_j - u)^{n-r-1}
%\prod_{i \neq j} \left( \frac1{y_j - y_i} \right)
%\int_{\g(u,v,y)} dw \; \frac1{(w - v)^{n-s+1}}
%\sum_{k=0}^{n-1} (w - v)^k e_{n-1-k} \left( v - y_1, \p{v - y_j}, v - y_n \right) \\
%& = & - 2\pi i \sum_{j ; y_j > u } (y_j - u)^{n-r-1} 
%\int_{\g(u,v,y)} dw \; \frac1{(w - v)^{n-s+1}}
%\prod_{i \neq j} \left( \frac{w - y_i}{y_j - y_i} \right),
%\end{eqnarray*}
%where the second step follows since $\g(u,v,y)$ contains the point $v$, and so only term
%$k = n-s$ survives. The result follows since, choosing $\G(u,v,y)$ as above, it follows
%that $w - z \neq 0$ for all $w$ on $\g(u,v,y)$ and $z$ inside and on $\G(u,v,y)$.
%
To see the second part note that the residue Theorem alternatively gives
$$
G_{r,r}^{(u,v)}(y)
= \int_{\g(u,v,y)} dw \int_{\G(u,v,y)} dz \; \frac{(z + v - u)^{n-r-1}}{w^{n-r+1}}
\frac1{w-z} \prod_{i=1}^n \left( \frac{w + v - y_i}{z + v - y_i} \right),
$$
for all $r \in \{1,\ldots,n-1\}$, $u,v \in \R$, and $y \in \S_n \CC_n$, where we choose
the contours so that they do not intersect, $\g(u,v,y)$ is not in the interior of
$\G(u,v,y)$, $\g(u,v,y)$ is a counter-clockwise simple closed contour around $0$, and
$\G(u,v,y)$ is chosen as in the second part of the result.
%
%Consider the case where $v \le u$. Then
%\begin{eqnarray*}
%\lefteqn{G_{r,s}^{(u,v)}(y)} \\
%& = & (2\pi)^2 \sum_{j ; y_j > u } (y_j - u)^{n-r-1}
%\prod_{i \neq j} \left( \frac1{y_j - y_i} \right) e_{s-1} \left( v - y_1, \p{v - y_j}, v - y_n \right) \\
%& = & - 2\pi i \sum_{j ; y_j > u } (y_j - u)^{n-r-1}
%\prod_{i \neq j} \left( \frac1{y_j - y_i} \right) \int_{\g(u,v,y)} dw \; \frac1{w^{n-s+1}}
%\sum_{k=0}^{n-1} w^k e_{n-1-k} \left( v - y_1, \p{v - y_j}, v - y_n \right) \\
%& = & - 2\pi i \sum_{j ; y_j > u } (y_j - u)^{n-r-1} 
%\int_{\g(u,v,y)} dw \; \frac1{w^{n-s+1}}
%\prod_{i \neq j} \left( \frac{w + v - y_i}{y_j - y_i} \right) \\
%& = & \int_{\g(u,v,y)} dw \int_{\G(u,v,y)} dz \; \frac{(z + v - u)^{n-r-1}}{w^{n-s+1}}
%\frac1{w-z} \prod_{i=1}^n \left( \frac{w + v - y_i}{z + v - y_i} \right).
%\end{eqnarray*}
%The final step follows easily since $\g(u,v,y)$ is not in the interior of $\G(u,v,y)$,
%and since $\G(u,v,y)$ is a clockwise simple closed contour in
%$\C \setminus \{y_1 - v, \ldots, y_n - v\}$ which contains $\{y_j - v : y_j > u \}$ and does not
%contain $\{y_j - v : y_j \le u\}$. Therefore
%
Fixing $r \in \{1,\ldots,n-1\}$, $u,v \in \R$, and $y \in \S_n \CC_n$, write
\begin{equation}
\label{eqprIntExpKl2}
G_{r,r}^{(u,v)}(y) = \sum_{m=1}^{n-r} \binom{n-r}{m-1} (v-u)^{m-1} F_m,
\end{equation}
where
$$
F_m := \int_{\g(u,v,y)} dw \int_{\G(u,v,y)} dz \; \frac{z^{n-r-m}}{w^{n-r+1}}
\frac1{w-z} \prod_{i=1}^n \left( \frac{w + v - y_i}{z + v - y_i} \right).
$$
Note, for all $b \in \R$ sufficiently close to $1$, the residue Theorem implies that
$\g(u,v,y)$ and $\G(u,v,y)$ can be replaced by $b \; \g(u,v,y)$ and
$b \; \G(u,v,y)$ respectively, and so
$$
F_m = b^{-m} \int_{\g(u,v,y)} dw \int_{\G(u,v,y)} dz \; \frac{z^{n-r-m}}{w^{n-r+1}}
\frac1{w-z} \prod_{i=1}^n \left( \frac{bw + v - y_i}{bz + v - y_i} \right).
$$
%
%To see this note the residue Theorem gives
%$$
%F_{m,r}(u,v,y) = \int_{b \; \g(u,v,y)} dw \int_{b \; \G(u,v,y)} dz \; \frac{z^{n-r-1-m}}{w^{n-r+1}}
%\frac1{w-z} \prod_{i=1}^n \left( \frac{w + v - y_i}{z + v - y_i} \right).
%$$
%for all $b$ close to $1$. Then apply the chain rule.
%
Differentiate both sides with respect to $b$ and set $b=1$ to get
$$
F_m = \frac1m \int_{\g(u,v,y)} dw \int_{\G(u,v,y)} dz \;
\frac{z^{n-r-m}}{w^{n-r+1}} \sum_{j=1}^n \frac{v - y_j}{(z + v - y_j)^2}
\prod_{i \neq j} \left( \frac{w + v - y_i}{z + v - y_i} \right).
$$
%
%True since differentiating gives
%\begin{eqnarray*}
%0
%& = & - (m+1) b^{-m-2} \int_{\g(u,v,y)} dw \int_{\G(u,v,y)} dz \; \frac{z^{n-r-1-m}}{w^{n-r+1}}
%\frac1{w-z} \prod_{i=1}^n \left( \frac{bw + v - y_i}{bz + v - y_i} \right) \\
%& & + b^{-m-1} \int_{\g(u,v,y)} dw \int_{\G(u,v,y)} dz \; \frac{z^{n-r-1-m}}{w^{n-r+1}}
%\frac1{w-z} \sum_{j=1}^n \left( \frac{w(bz + v - y_j) - z(bw + v - y_j)}{(bz + v - y_j)^2} \right)
%\prod_{i \neq j} \left( \frac{bw + v - y_i}{bz + v - y_i} \right),
%\end{eqnarray*}
%
The residue Theorem implies that $\g(u,v,y)$ can be replaced
by any counter-clockwise simple closed contour around $0$, $\g$.
Equation (\ref{eqprIntExpKl2}) then gives
$$
G_{r,r}^{(u,v)}(y) = \sum_{m=1}^{n-r} \binom{n-r}m \frac{(v-u)^{m-1}}{n-r}
\int_\g dw \int_{\G(u,v,y)} dz \; \frac{z^{n-r-m}}{w^{n-r+1}}
\sum_{j=1}^n \frac{v - y_j}{(z + v - y_j)^2} \prod_{i \neq j} \left( \frac{w + v - y_i}{z + v - y_i} \right).
$$
%
%To see this note,
%\begin{eqnarray*}
%\lefteqn{G_{r,r}(u,v,y)
%\; = \; \sum_{m=0}^{n-r-1} \binom{n-r-1}m (v-u)^m F_m} \\
%& = & \sum_{m=0}^{n-r-1} \frac1{m+1} \binom{n-r-1}m (v-u)^m
%\int_{\g(u,v,y)} dw \int_{\G(u,v,y)} dz \;
%\frac{z^{n-r-1-m}}{w^{n-r+1}} \sum_{j=1}^n \frac{v - y_j}{(z + v - y_j)^2}
%\prod_{i \neq j} \left( \frac{w + v - y_i}{z + v - y_i} \right) \\
%& = & \sum_{m=0}^{n-r-1} \frac1{n-r} \binom{n-r}{m+1} (v-u)^m
%\int_{\g(u,v,y)} dw \int_{\G(u,v,y)} dz \;
%\frac{z^{n-r-1-m}}{w^{n-r+1}} \sum_{j=1}^n \frac{v - y_j}{(z + v - y_j)^2}
%\prod_{i \neq j} \left( \frac{w + v - y_i}{z + v - y_i} \right).
%\end{eqnarray*}
%
This holds for all $r \in \{1,\ldots,n-1\}$, $u,v \in \R$, and $y \in \S_n \CC_n$.
Equation (\ref{eqprIntExpKl1}) gives the required result.
%
%First suppose $u=v$. Then
%$$
%G_{r,r} (u,u,y)
%= \int_{\g(u,v,y)} dw \int_{\G(u,v,y)} dz \frac{z^{n-r-1}}{w^{n-r+1}}
%\sum_{j=1}^n \frac{v - y_j}{(z + v - y_j)^2} \prod_{i \neq j} \left( \frac{w + v - y_i}{z + v - y_i} \right).
%$$
%Now suppose $u \neq v$. Result follows since
%$$
%\sum_{m=1}^{n-r} \binom{n-r}m (v-u)^{m-1} z^{n-r-m}
%= \frac{(z + v - u)^{n-r} - z^{n-r}}{v-u}.
%$$
%
\end{proof}

\section{Proof of Theorem \ref{thSine}}
\label{secPOTMR}

In this section we prove Theorem \ref{thSine}. Fix $a,b \in \R$ with $a<b$. For each
$n \in \N$, choose $q_n \in \{1,\ldots,n\}$ and $x^{(n)} \in \CC_n \cap [a,b]^n$ as in
sections \ref{secfp} and \ref{secR}, and equip $\GT_n$ with the measure given in equation
(\ref{eqUniMeasGTn}). This satisfies equation (\ref{eqMeasCnCn0})
with $\phi_i = \d_{x_i^{(n)}}$ for all $i \in \{1,\ldots,n\}$. Let $K_n : (\{1,\ldots,n\}
\times [a,b])^2 \to \C$ be the associated correlation kernel given equation in
(\ref{eqKnrusvFixTopLine}).

Assume hypothesis \ref{hypWeakConv}. Fix $c \in A_\a$ and $U,V \subset \R$ compact, where
$A_\a \subset (a,b)$ is given in equation (\ref{eqAa}). Proposition \ref{prIntExpKl} gives
\begin{eqnarray}
\label{eqIntExpCorKer}
& & \hspace{2.5cm}  \frac{4 \pi^2}n 
K_n \left( \left( q_n, c + \frac{u}n \right), \left( q_n, c + \frac{v}n \right) \right) = \\
\nonumber
& & \int_{\g_n} dw \int_{\G_n} dz
\left( \frac{(z + \frac{v - u}n)^{n-q_n} - z^{n-q_n}}{ (v-u) w^{n-q_n+1}} \right)
\sum_{j=1}^n \frac{c + \frac{v}n - x_j^{(n)}}{(z + c + \frac{v}n - x_j^{(n)})^2}
\prod_{i \neq j} \left( \frac{w + c + \frac{v}n - x_i^{(n)}}{z + c + \frac{v}n - x_i^{(n)}} \right),
\end{eqnarray}
for all $n$ sufficiently large, $u \in U$ and $v \in V$, where
$\g_n$ is a counter-clockwise simple closed contour around $0$, and
$\G_n$ is a simple closed contour in
$\C \setminus \{x_1^{(n)} - \frac{v}n - c, \ldots, x_n^{(n)} - \frac{v}n - c \}$ which
satisfies
\begin{itemize}
\item
Whenever $v \le u$, $\G_n$ is clockwise, contains
$\{x_j^{(n)} - \frac{v}n - c : x_j^{(n)} > \frac{u}n + c \}$ and does not contain
$\{x_j^{(n)} - \frac{v}n - c : x_j^{(n)} \le \frac{u}n + c \}$.
\item
Whenever $v > u$, $\G_n$ is counter-clockwise, contains
$\{x_j^{(n)} - \frac{v}n - c : x_j^{(n)} \le \frac{u}n + c \}$ and does not contain
$\{x_j^{(n)} - \frac{v}n - c : x_j^{(n)} > \frac{u}n + c \}$.
\end{itemize}

We examine the asymptotics of this kernel via saddle point analysis. First note, for all $n$
sufficiently large, $u \in U$, $v \in V$, and $z,w \in \C \setminus \R$, the integrand can be
rewritten as
\begin{equation}
\label{eqIntExp}
n \left( \frac{(1 + \frac{v - u}{n z})^{n-q_n} - 1}{v-u} \right) g_{n,v}(w,z)
\; e^{n(h_{n,v}(w) - h_{n,v}(z))},
\end{equation}
where, using the principal value of the logarithm,
$h_{n,v} : \C \setminus \R \to \C$ and $g_{n,v} : (\C \setminus \R)^2 \to \C$ are given by
\begin{eqnarray}
\label{eqhn}
h_{n,v} (w) & := & \int \log(w + c - x) \mu_{n,v} [dx] - \frac{n-q_n}n \log(w), \\
\label{eqhngn}
g_{n,v} (w,z) & := & \left\{
\begin{array}{rl}
\frac{h_{n,v}'(w) - h_{n,v}'(z)}{w - z} + \frac{h_{n,v}'(w)}w; & w \neq z, \\
h_{n,v}''(w) + \frac{h_{n,v}'(w)}w; & w=z,
\end{array}
\right.
\end{eqnarray}
and $\mu_{n,v}$ is the empirical probability measure
\begin{equation}
\label{eqEmpProbMeas}
\mu_{n,v} := \frac1n \sum_{i=1}^n \d_{x_i^{(n)} - \frac{v}n}.
\end{equation}
The following Lemma proves the existence of appropriate saddle points of $h_{n,v}$ for the analysis,
and the first part of Theorem \ref{thSine}. 

\begin{lem}
\label{lemcp}
Define $h : \C \setminus \R \to \C$ by
\begin{equation}
\label{eqh0}
h(w) := \int \log(w + c - x) \mu[dx] - (1 - \a) \log(w),
\end{equation}
for all $w \in \C \setminus \R$. Then there exists a $w_0 \in \C$ with $\Im(w_0) > 0$
and $\{ w \in \C \setminus \R : h'(w) = 0 \} = \left\{ w_0, \overline{w_0} \right\}$.
Also $h''(w_0) \neq 0$. Moreover, given $v \in V$ and $n$ sufficiently large,
there exists a $w_{n,v} \in \C$ with $\Im(w_{n,v}) > 0$ and
$\{ w \in \C \setminus \R : h_{n,v}'(w) = 0 \} = \left\{ w_{n,v}, \overline{w_{n,v}} \right\}$.
Finally
$$
\lim_{n \to \infty} \sup_{v \in V} |w_{n,v} - w_0| = 0.
$$
\end{lem}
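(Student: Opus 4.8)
The plan is to reduce the entire statement to two ingredients: an elementary count of the real zeros of the rational function $h_{n,v}'$, and a Rouch\'e/Hurwitz comparison of $h_{n,v}'$ with its pointwise limit $h'$. First I would differentiate under the integral sign to get $h'(w)=G_\mu(w+c)-\tfrac{1-\a}{w}$ and $h_{n,v}'(w)=G_{\mu_{n,v}}(w+c)-\tfrac{n-q_n}{n}\cdot\tfrac1w$, observe that $h'(w)=0$ is precisely $w\,G_\mu(w+c)=1-\a$, and read off from (\ref{eqAa}) that a solution $w_0\in\C\setminus\R$ exists because $c\in A_\a$; since $\mu$ is a real measure, $\overline{w_0}$ is also a solution, so I fix $w_0$ with $\Im(w_0)>0$. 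Everything else follows once the number of zeros of $h_{n,v}'$ is pinned down.

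The central step is to show that, for every $n$ and $v$, $h_{n,v}'$ has at most two zeros in $\C\setminus\R$. Setting $b_i:=x_i^{(n)}-\tfrac vn-c$ (distinct reals with $b_1>\cdots>b_n$) and assuming first $0\notin\{b_i\}$, I would clear denominators to rewrite $h_{n,v}'(w)=0$ as $P(w)=0$, where $P(w):=w\sum_{i=1}^n\prod_{k\neq i}(w-b_k)-(n-q_n)\prod_{k=1}^n(w-b_k)$; one checks $\deg P=n$ with leading coefficient $q_n$, and $P(b_j)=b_j\prod_{k\neq j}(b_j-b_k)$, of sign $\sgn(b_j)(-1)^{j-1}$. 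Since the decreasing sequence $b_1,\ldots,b_n$ changes sign at most once, there are at least $n-2$ indices $j$ with $b_j,b_{j+1}$ of the same sign, and across each such pair $P$ changes sign; these sign changes sit in pairwise disjoint intervals and produce at least $n-2$ real zeros of $P$, none of which is $0$ or a $b_k$ (as $P$ does not vanish there), hence at least $n-2$ real zeros of $h_{n,v}'$. As $P$ is real of degree $n$, this leaves at most two zeros off $\R$, and by conjugate symmetry at most one in the open upper half-plane $\C^+:=\{w\in\C:\Im(w)>0\}$. The degenerate case $0=b_i$ is handled the same way after absorbing the coincident pole, which lowers $\deg P$ by one without changing its leading coefficient. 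The hard part is exactly this sign bookkeeping: it must be carried out for every sign pattern of the $b_i$; one notes that when all $b_i$ share a sign the count actually produces $n$ real zeros, which is consistent because $c\in A_\a$ will force a non-real zero of $h_{n,v}'$ once $n$ is large, so such configurations cannot persist.

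For the comparison I would use that $\tfrac1n\sum_i\delta_{x_i^{(n)}}\to\mu$ weakly and $\tfrac vn\to0$ uniformly on the bounded set $V$, so $\mu_{n,v}\to\mu$ weakly, uniformly in $v\in V$; combined with $\tfrac{n-q_n}{n}\to1-\a$ and the fact that $w+c-x_i^{(n)}+\tfrac vn$ and $w$ are bounded away from $0$ on compact subsets of $\C\setminus\R$, this gives $h_{n,v}'\to h'$ and $h_{n,v}''\to h''$ uniformly on such compacta, uniformly in $v$. Fixing $\e_0>0$ with $\overline{B(w_0,\e_0)}\subset\C^+$ and $h'$ nonvanishing on $\overline{B(w_0,\e_0)}\setminus\{w_0\}$, so that $\inf_{\partial B(w_0,\e_0)}|h'|>0$, Rouch\'e's theorem (applicable uniformly in $v$ for all large $n$) shows $h_{n,v}'$ has as many zeros in $B(w_0,\e_0)$ as $h'$; since $h_{n,v}'$ has at most one zero in $\C^+$, this number is exactly one, so $h_{n,v}'$ has a unique zero $w_{n,v}$ in $\C^+$ (with $\overline{w_{n,v}}$ its only other non-real zero) and $w_0$ is a simple zero of $h'$, i.e.\ $h''(w_0)\neq0$. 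A symmetric Rouch\'e argument on a large disk in $\C^+$ shows $h'$ has at most, and hence exactly, one zero there, so $\{w\in\C\setminus\R:h'(w)=0\}=\{w_0,\overline{w_0}\}$. Finally, repeating the Rouch\'e estimate on $\partial B(w_0,\e)$ for arbitrary $\e\in(0,\e_0)$ produces $N(\e)$ such that the unique $\C^+$-zero $w_{n,v}$ lies in $B(w_0,\e)$ for all $n\ge N(\e)$ and all $v\in V$; this is exactly $\sup_{v\in V}|w_{n,v}-w_0|\to0$. Apart from the sign-count, every ingredient is routine: continuity of Cauchy transforms under weak convergence, and Rouch\'e/Hurwitz.
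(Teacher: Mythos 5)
Your proposal is correct and follows essentially the same route as the paper: clear denominators so that the non-real zeros of $h_{n,v}'$ are zeros of a degree-$n$ real polynomial with at least $n-2$ real roots (you supply the sign-change bookkeeping that the paper leaves implicit), hence at most one zero of $h_{n,v}'$ in the upper half-plane, and then combine the existence of $w_0$ from $c\in A_\a$ with uniform convergence of $h_{n,v}'$ (the paper gets this via a Bolzano--Weierstrass contradiction plus Cauchy estimates rather than your direct assertion) and Rouch\'{e}'s theorem to obtain simplicity of $w_0$, uniqueness, existence of $w_{n,v}$, and $\sup_{v\in V}|w_{n,v}-w_0|\to 0$. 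The only cosmetic slip is the appeal to ``a large disk in $\C^+$'' for uniqueness of $w_0$ --- a single disk containing both $w_0$ and a putative second zero lying close to $\R$ need not be contained in the upper half-plane --- but applying the same Rouch\'{e} comparison on two small disjoint balls, one about each zero, repairs this immediately and is what the paper's argument implicitly does.
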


\begin{proof}
Since roots of $h'$ and $h_{n,v}'$ occur in complex conjugate pairs,
we shall restrict our attention to $\{w \in \C : \Im(w) > 0 \}$.
Equations (\ref{eqhn}) and (\ref{eqEmpProbMeas}) give
$$
n w \prod_{i=1}^n (w + c + \frac{v}n - x_j^{(n)}) h_{n,v}'(w)
= w \sum_{j=1}^n \prod_{i \neq j} (w + c + \frac{v}n - x_i^{(n)})
- (n - q_n) \prod_{i=1}^n (w + c + \frac{v}n - x_j^{(n)}),
$$
for all $n$ sufficiently large, $v \in V$ and $w \in \C$ with $\Im(w) > 0$.
%
%To see note
%$$
%h_n'(w)
%= \int \frac1{w + c - x} \mu_n[dx] - \frac{n - q_n}{n w}
%= \frac1n \sum_{j=1}^n \frac1{w + c + \frac{v}n - x_j} - \frac{n - q_n}{n w}
%$$
%for all $w \in \C \setminus \R$.
%
The right hand side, a polynomial of degree $n$ with real coefficients,
has at least $n-2$ roots in $\R$.
%
%Let $y_j = x_j + \frac{v}n - c$ for all $j$ and
%$$
%q_n(w) := w \sum_{j=1}^n \prod_{i \neq j} (w - y_i)
%- (n - q_n) \prod_{i=1}^n (w - y_i).
%$$
%First suppose that $0 \not\in \{y_1,\ldots,y_n\}$. Then $q_n(y_k) \neq 0$ for all $k$, and
%$$
%\sgn(q_n(y_k))
%= \sgn \left( (y_k) \prod_{i \neq k} (y_k - y_i) \right)
%= \sgn(y_k) (-1)^{ \# \{i < k\} }
%= \sgn(y_k) (-1)^{k-1}.
%$$
%Thus, letting $j$ be that value for which $0 \in (y_{j+1}, y_j)$, $q_n$ has at least one zero
%in each of the intervals $(y_n, y_{n-1}), (y_{n-1}, y_{n-2}), \ldots, (y_{j+2}, y_{j+1}),
%(y_j, y_{j-1}), \ldots, (y_2 ,y_1)$. This identifies $n-2$ zeros.
%
%Now suppose that $0 = y_j$ some $j$. Also $q_n(y_k) \neq 0$ for all $k \neq j$, and
%$$
%\sgn(q_n(y_k))
%= \sgn \left( (y_k) \prod_{i \neq k} (y_k - y_i) \right)
%= \sgn(y_k) (-1)^{ \# \{i < k\} }
%= \sgn(y_k) (-1)^{k-1}.
%$$
%Thus $q_n$ has at least one zero in each of the intervals $(y_n, y_{n-1}), (y_{n-1}, y_{n-2}),
%\ldots, (y_{j+2}, y_{j+1}), \newline (y_{j-1}, y_{j-2}), \ldots, (y_2, y_1)$. Also
%$q_n(y_j) = 0$. This identifies $n-2$ zeros.
%
Thus $h_{n,v}'$ has at most one root (counting multiplicities) in $\{w \in \C : \Im(w) > 0\}$.

Since $c \in A_\a$, equations (\ref{eqAa}) and (\ref{eqh0}) imply that
$h'$ has at least one root in $\{ w \in \C: \Im(w) > 0 \}$. Denoting this by $w_0$,
we now show that, for any $\e \in (0,\Im(w_0))$ and $j \ge 0$
\begin{equation}
\label{eqlemcp1}
\lim_{n \to \infty} \sup_{v \in V} \sup_{w \in \bar{B}(w_0,\e)} | h_{n,v}^{(j)}(w) - h^{(j)}(w) | = 0.
\end{equation}
We use the method of contradictions to prove the result for $j=0$.
Assume that this does not hold for some $\e \in (0,\Im(w_0))$.
Thus there exists some $\xi > 0$ for which, for all $n \ge 1$,
there exists some $m_n \ge n$ and $z_n \in \bar{B}(w_0, \e)$ with
$\xi \le \sup_{v \in V} | h_{m_n,v}(z_n) - h(z_n) |$.
%
%True since there exists some $\xi > 0$ for which, for all $n\ge1$, there exists
%some $m_n \ge n$ with,
%$$
%\sup_{v \in V} \sup_{w \in \bar{B}(w_0, \e)} | h_{m_n,v}(w) - h(w) | \ge \xi.
%$$
%Also, since $V \times \bar{B}(w_0, \e)$ is compact, the supremum is attained.
%
Also the Bolzano-Weierstrass Theorem implies that we can choose
$\{z_n\}_{n\ge1}$ to be convergent. Denoting the limit by $z_0$,
$$
\xi \le \sup_{v \in V} |h_{m_n,v}(z_n) - h_{m_n,v}(z_0)| + \sup_{v \in V} |h_{m_n,v}(z_0) - h(z_0)|
+ |h(z_0) - h(z_n)|,
$$
for all $n$ sufficiently large. Finally note equation (\ref{eqhn}) gives
$\sup \{ |h_{m_n,v}'(w)| : v \in V \mbox{ and } w \in \bar{B}(w_0,\e) \}
\le 2 (|\Im(w_0)| - \e)^{-1}$ for all $n$ sufficiently large,
and so
$$
\sup_{v \in V} |h_{m_n,v}(z_0) - h(z_0)| \ge \frac{\xi}2
$$
for all $n$ sufficiently large.
%
%To see this recall
%$$
%h_{n,v}'(w) = \int \frac1{w + c - x} \mu_{n,v} [dx] - \frac{n-q_n}{n w}.
%$$
%The bound follows since $\bar{B}(w_0,\e) \subset \C \setminus \R$. Also
%\begin{eqnarray*}
%\xi
%& \le & \sup_{v \in V} |h_{m_n,v}(z_n) - h_{m_n,v}(z_0)| + \sup_{v \in V} |h_{m_n,v}(z_0) - h(z_0)|
%+ |h(z_0) - h(z_n)| \\
%& \le & \sup_{v \in V} \sup_{w \in \bar{B}(w_0,\e)} |h_{m_n,v}'(w)| |z_n - z_0|
%+ \sup_{v \in V} |h_{m_n,v}(z_0) - h(z_0)| + |h(z_0) - h(z_n)|.
%\end{eqnarray*}
%
However, since $\frac{q_n}n \to \a$ and $\mu_{n,0}
\to \mu$ weakly (see equation (\ref{eqEmpProbMeas}) and hypothesis \ref{hypWeakConv}),
equations (\ref{eqhn}) and (\ref{eqh0}) imply that this is false.
Thus equation (\ref{eqlemcp1}) is true when $j = 0$. The result for $j \ge 1$ follows from
Cauchy estimates.

Now, since $h'$ is a non-constant analytic function on $\{w \in \C : \Im(w) > 0\}$ with
$h'(w_0) = 0$, then $h'(w) \neq 0$ for all $w \in B(w_0,\e) \setminus \{w_0\}$ and all
$\e > 0$ sufficiently small. Thus, letting $\partial B(w_0,\e)$ be the boundary of
$B(w_0,\e)$, the Bolzano-Weierstrass Theorem gives
$$
\inf_{w \in  \partial B(w_0,\e)} |h'(w)| > 0,
$$
for all $\e > 0$ sufficiently small.
%
%This is shown by contradiction. Assume that $\inf_{w \in \g^\ast} |h'(w)| = 0$. Thus
%for all $n$ there exists a $z_n \in \partial B(w_0,\e)$ with $|h'(z_n)| < \frac1n$. The
%sequence $\{z_n\}_{n\ge1}$ is bounded, and so can be assumed to be convergent by the
%Bolzano-Weierstrass Theorem. Denoting the limit by $z_0$, this gives $h'(z_0) = 0$.
%This is impossible since $w_0 \neq z_0$.
%
It thus follows from equation (\ref{eqlemcp1}) and Rouch\'{e}'s Theorem that there exists
a function $N : \R_+ \to \N$ for which $h'$ and $h_{n,v}'$ have the same number of roots in
$B(w_0,\e)$ (counting multiplicities) for all $\e>0$ sufficiently small, $v \in V$
and $n \ge N(\e)$. Since this can be done for any $\e>0$ sufficiently small, the required
results follow from the above observation that $h_{n,v}'$ has at most one root
(counting multiplicities) in $\{ w \in \C: \Im(w) > 0 \}$.
\end{proof}

For notational purposes set $w_0^+ := w_0$, $w_0^- := \overline{w_0}$,
$w_{n,v}^+ := w_{n,v}$ and $w_{n,v}^- := \overline{w_{n,v}}$. 

\begin{rem}
\label{remmu00}
Some useful observations: Equations (\ref{eqhn}) and (\ref{eqh0}), and
Lemma \ref{lemcp}, give 
$$
\int \frac{w_{n,v}^\pm}{w_{n,v}^\pm + c - x} \mu_{n,v}[dx] = \frac{n - \an}n
\hspace{0.5cm} \mbox{and} \hspace{0.5cm}
\int \frac{w_0^\pm}{w_0^\pm + c - x} \mu[dx] = 1 - \a,
$$
for all $n$ sufficiently large and $v \in V$. Comparing real and imaginary parts,
\begin{eqnarray}
\label{eqremmu001}
\int \frac{c - x}{|w_{n,v}^\pm + c - x|^2} \mu_{n,v}[dx] \; = \; 0,
\hspace{0.5cm} & & \hspace{0.5cm}
\int \frac{c - x}{|w_0^\pm + c - x|^2} \mu[dx] \; = \; 0, \\
\label{eqremmu002}
\int \frac{|w_{n,v}^\pm|^2}{|w_{n,v}^\pm + c - x|^2} \mu_{n,v}[dx] = \frac{n - \an}n,
\hspace{0.5cm} & & \hspace{0.5cm}
\int \frac{|w_0^\pm|^2}{|w_0^\pm + c - x|^2} \mu[dx] = 1 - \a,
\end{eqnarray}
for all $n$ sufficiently large and $v \in V$.
%
%To see this note
%$$
%\int \frac{w_n}{w_n + c - x} \mu_n[dx]
%\; = \; \int \frac{w_n (\overline{w_n} + c - x)}{|w_n + c - x|^2} \mu_n[dx]
%\; = \; \int \frac{|w_n|^2 + (c - x)w_n }{|w_n + c - x|^2} \mu_n[dx].
%$$
%
\end{rem}

We now fix the contours $\g_n$ and $\G_n$ of equation (\ref{eqIntExpCorKer}). We define
them to pass through $w_{n,v}^\pm$ so that a saddle point asymptotic analysis
can be performed, i.e., the integral can be estimated using small sections of the
contours around $w_{n.v}^\pm$. Equation (\ref{eqIntExp}) implies that we need to choose
them so that $w \mapsto \left| e^{h_n(w)} \right|$ and
$z \mapsto \left| e^{-h_n(z)} \right|$, for all $w$ on $\g_n$ and $z$ on $\G_n$,
are both maximised at $w_{n,v}^\pm$.

Lemma \ref{lemcp} and equation (\ref{eqlemcp1}) show that $h''(w_0^+) \neq 0$ and
\begin{equation}
\label{eqhmuhnbdd}
\lim_{n \to \infty} \sup_{v \in V} \left| h_{n,v}''(w_{n,v}^\pm) - h''(w_0^\pm) \right| = 0.
\end{equation}
%
%To see this note that
%$$
%\left| h_{n,v}''(w_{n,v}) - h''(w_0) \right|
%\le \left| h_{n,v}''(w_{n,v}) - h''(w_{n,v}) \right| + \left| h''(w_{n,v}) - h''(w_0) \right|.
%$$
%Fix $\e \in (0,\Im(w_0))$. Thus, since $\lim_{n \to \infty} \sup_{v \in V} |w_{n,v} - w_0| = 0$,
%$$
%\sup_{v \in V} |h_{n,v}''(w_{n,v}) - h''(w_0)|
%\le \sup_{v \in V} \sup_{w \in B(w_0,\e)} |h_{n,v}''(w) - h''(w_0)|
%\le \sup_{v \in V} \sup_{w \in B(w_0,\e)} \left( |h_{n,v}''(w) - h''(w)| + |h''(w) - h''(w_0)| \right),
%$$
%for any $n$ sufficiently large. The result follows from equation (\ref{eqlemcp1}),
%and from the fact that $h''$ is continuous in the upper half plane.
%
Thus we can define $\t_{n,v} := - \frac12 \Arg(h_{n,v}''(w_{n,v}^+))$ for all $n$ sufficiently
large and $v \in V$, where $\Arg(w) \in (-\pi,\pi]$ is the argument of $w$. Then,
fixing $\d \in (\frac13,\frac12)$, and defining $\varepsilon_0 := (-1)^{1_{C_0 \ge 0}}$ where
$C_0 \in \R$ is given in equation (\ref{eqC0}), define
\begin{equation}
\label{eqgamdef}
\begin{array}{llll}
\g_{n,1}^+(s) & := & |w_{n,v}^+ - i \varepsilon_0 \; n^{-\d} e^{ i \t_{n,v} }| \; e^{is},
& s \in [0,\frac1n), \\
\g_{n,2}^+(s) & := & |w_{n,v}^+ - i \varepsilon_0 \; n^{-\d} e^{ i \t_{n,v} }| \; e^{is},
& s \in [\frac1n,\Arg (w_{n,v}^+ - i \varepsilon_0 \; n^{-\d} e^{ i \t_{n,v} })), \\
\g_{n,3}^+(s) & := & w_{n,v}^+ + i \varepsilon_0 \; n^{-\d} e^{ i \t_{n,v} } s,
& s \in [-1,1], \\
\g_{n,4}^+(s) & := & |w_{n,v}^+ + i \varepsilon_0 \; n^{-\d} e^{ i \t_{n,v} }| \; e^{is},
& s \in (\Arg(w_{n,v}^+ + i \varepsilon_0 \; n^{-\d} e^{ i \t_{n,v} }),\pi-\frac1n], \\
\g_{n,5}^+(s) & := & |w_{n,v}^+ + i \varepsilon_0 \; n^{-\d} e^{ i \t_{n,v} }| \; e^{is},
& s \in (\pi-\frac1n,\pi].
\end{array}
\end{equation}
Take $\g_n := \sum_{j=1}^5 (\g_{n,j}^+ + \g_{n,j}^-)$, where $\g_{n,j}^-$ is the contour with
counter-clockwise orientation obtained by reflecting $\g_{n,j}^+$ through the real line. Also
define $\G : \{z \in \C : \Im(z) > 0\} \times (0,\infty) \to \{z \in \C : \Im(z) > 0\}$ by
\begin{equation}
\label{eqG}
\G(w,s) := s |w| \exp \left( i \; \arccos \left( \cos(\Arg(w)) \frac{2s\log(s)}{s^2-1} \right) \right),
\end{equation}
for all $w \in \C$ with $\Im(w) > 0$ and $s > 0$, where $\arccos : (-1,1) \to (0,\pi)$ is the principal
value of the inverse cosine function. Note, for any fixed $w \in \C$ with $\Im(w) > 0$, the contour
$\G(w,\cdot) : (0,\infty) \to \{z \in \C : \Im(z) > 0\}$ is well-defined and continuous and satisfies
$\G(w,1) = w$ and $\lim_{s \to 0} \Arg(\G(w,s)) = \lim_{s \to \infty} \Arg(\G(w,s)) = \frac{\pi}2$.
%
%This follows since $\frac{2s\log(s)}{s^2-1} \in (0,1]$ for all $s>0$,
%$\lim_{s \to 0} \frac{2s\log(s)}{s^2-1} = 0$, $\lim_{s \to 1} \frac{2s\log(s)}{s^2-1} = 1$,
%and $\lim_{s \to \infty} \frac{2s\log(s)}{s^2-1} = 0$. To see this let
%$$
%f(s) := \frac{2s\log(s)}{s^2-1},
%$$
%for all $s>0$. First note it is easy to see that $f(s) > 0$ for all $s \in (0,1)$. Also
%$\lim_{s \to 1} f(s) = 1$ follows from a Taylor expansion of $\log(s)$ around $s=1$:
%\begin{eqnarray*}
%\log(s) & = & (s-1) + O((s-1)^2) \\
%\Rightarrow f(s) & = & \frac{2s(1+O(s-1))}{s+1}.
%\end{eqnarray*}
%Finally, we need to show that $f(s)<1$ for all $s \in (0,1) \cup (1,\infty)$:
%\begin{eqnarray*}
%\frac{2s\log(s)}{s^2-1} & < &  1 \;\; \mbox{ for all } s \in (0,1) \cup (1,\infty) \\
%\Leftrightarrow 2\log(s) & > & s - \frac1s \;\; \mbox{ for all } s \in (0,1), \\
%2\log(s) & < & s - \frac1s \;\; \mbox{ for all } s \in (1,\infty) \\
%\Leftrightarrow 2\log(s) - s + \frac1s & > & 0 \;\; \mbox{ for all } s \in (0,1), \\
%2\log(s) - s + \frac1s & < & 0 \;\; \mbox{ for all } s \in (1,\infty).
%\end{eqnarray*}
%Let $g(s) = 2\log(s) - s + \frac1s$. We need to show $g(s) > 0$ for all $s \in (0,1)$
%and $g(s)<0$ for all $s \in (1,\infty)$. Note
%$$
%g'(s)
%= \frac2s - 1 - \frac1{s^2}
%= -\frac1{s^2} (s^2 - 2s + 1)
%= -\frac1{s^2} (s-1)^2.
%$$
%Thus $g'(s)<0$ for all $s>0$, i.e., $g$ is strictly decreasing in $(0,\infty)$.
%Also $g(1)=0$. This gives the required result.
%
Then, for all $n$ sufficiently large, $u \in U$ and $v \in V$, define
\begin{equation}
\label{eqGamndef}
\begin{array}{llll}
\G_{n,1}^+(s) & := & (1-s) y_{n,u,v} + s \; \G(w_{n,v}^+ - \varepsilon_0 \; n^{-\d} e^{ i \t_{n,v} }, t_0),
& s \in [0,1), \\
\G_{n,2}^+(s) & := & \G(w_{n,v}^+ - \varepsilon_0 \; n^{-\d} e^{ i \t_{n,v} }, s), & s \in [t_0,1), \\
\G_{n,3}^+(s) & := & w_{n,v}^+ + \varepsilon_0 \; n^{-\d} e^{ i \t_{n,v} } s, & s \in [-1,1], \\
\G_{n,4}^+(s) & := & \G(w_{n,v}^+ + \varepsilon_0 \; n^{-\d} e^{ i \t_{n,v} }, s), & s \in (1,T_0),
\end{array}
\end{equation}
where $t_0 \in (0,1)$, $T_0>1$, and $y_{n,u,v} \in (x_j^{(n)} - \frac{v}n - c, x_{j-1}^{(n)}
- \frac{v}n - c)$ for that value of $j$ which satisfies $c + \frac{u}n \in [x_j^{(n)},x_{j-1}^{(n)})$.
These quantities will be fixed in Lemma \ref{lemBdRem}. Finally define $\G_{n,5}^+$ to be the
contour that spans the segment of the circle centered at the origin, starting at
$\G(w_{n,v}^+ + \varepsilon_0 \; n^{-\d} e^{ i \t_{n,v} }, T_0)$, ending on the real line,
with clockwise orientation when $v \le u$ and counter-clockwise orientation when $v > u$.
Take $\G_n := \sum_{j=1}^5 (\G_{n,j}^+ + \G_{n,j}^-)$, where $\G_{n,j}^-$ is  the contour
obtained by reflecting $\G_{n,j}^+$ through the real line. 

Let $\g : [0,\pi] \to \C$ be the contour given by $\g(s) := |w_0^+| e^{is}$ for all $s \in [0,\pi]$.
It follows from Lemma \ref{lemcp} and equation (\ref{eqgamdef}) that this can be regarded as
the `limit contour' in the upper half complex plane of $\{\g_n\}_{n \ge 1}$. Also equations
(\ref{eqG}) and (\ref{eqGamndef}) show that $\G(w_0^+,\cdot) : (0,\infty) \to
\{z \in \C : \Im(z) > 0\}$ can be regarded as the `limit contour' in the upper half complex plane
of $\{\G_n\}_{n \ge 1}$. As we shall see in Lemmas \ref{lemfn1fn2} and \ref{lempn2}, the functions
given by $w \mapsto \left| e^{h(w)} \right|$ and $z \mapsto \left| e^{-h(z)} \right|$,
for all $w$ on $\g$ and $z$ on $\G(w_0^+,\cdot)$, are both maximised at $w_0^+$. Using
this fact, the properties of $\g_n$ and $\G_n$ that make them suitable for saddle point analysis
are shown in Lemmas \ref{lemfn1fn2} and \ref{lempn2}. As we shall see,
for $n$ sufficiently large, the only significant contributions come from
$\g_{n,3}^\pm$ and $\G_{n,3}^\pm$.

For all $n$ sufficiently large, $u \in U$, $v \in V$ and $A,B \subset \{1,2,3,4,5\}$, define
$\g_n^A := \sum_{j \in A} \left( \g_{n,j}^+ + \g_{n,j}^- \right)$, 
$\G_n^B := \sum_{j \in B} \left( \G_{n,j}^+ + \G_{n,j}^- \right)$, and
\begin{equation}
\label{eqknsub}
K_{n,u,v}^{A,B}
:= \int_{\g_n^A} dw \int_{\G_n^B} dz
\left( \frac{(z + \frac{v - u}n)^{n-\an} - z^{n-\an}}{(v-u) w^{n-\an+1}} \right)
\sum_{j=1}^n \frac{c + \frac{v}n - x_j^{(n)}}{(z + c + \frac{v}n - x_j^{(n)})^2}
\prod_{i \neq j} \left( \frac{w + c + \frac{v}n - x_i^{(n)}}{z + c + \frac{v}n - x_i^{(n)}} \right).
\end{equation}
For all $n$ sufficiently large, $u \in U$ and $v \in V$, equation (\ref{eqIntExpCorKer}) then gives
\begin{eqnarray*}
& & \hspace{2cm}
\frac{4 \pi^2}n K_n \left( \left( \an, c + \frac{u}n \right), \left( \an, c + \frac{v}n \right) \right) \\
& = & K_{n,u,v}^{3,3} \; + \; K_{n,u,v}^{\{1,2,4,5\},3} \; + \; K_{n,u,v}^{\{1,2,3,4,5\},1}
\; + \; K_{n,u,v}^{\{1,2,3,4,5\},\{2,4\}} \; + \; K_{n,u,v}^{\{1,2,3,4,5\},5}.
\end{eqnarray*}
Lemmas \ref{lemBdRem}, \ref{lemfn1fn2} and \ref{lempn2}
imply that there exists constants $c>0$ and $C>1$ for which
\begin{eqnarray*}
\sup_{u \in U, v \in V} \left| K_{n,u,v}^{\{1,2,3,4,5\},1} + K_{n,u,v}^{\{1,2,3,4,5\},5} \right|
& \le & C^{-n}, \\
\sup_{u \in U, v \in V} \left| K_{n,u,v}^{\{1,2,4,5\},3} + K_{n,u,v}^{\{1,2,3,4,5\},\{2,4\}} \right|
& \le & n^3 \; e^{- c n^{1-2\d}},
\end{eqnarray*}
for all $n$ sufficiently large.
%
%To see this note Lemma \ref{lemBdRem} gives
%\begin{eqnarray*}
%\left| K_{n,u,v}^{\{1,2,3,4,5\},1} + K_{n,u,v}^{\{1,2,3,4,5\},5}\right|
%& \le & C^{-n}, \\
%\left| K_{n,u,v}^{\{1,2,4,5\},3} \right|
%& \le & \sup_{(w,z) \in (\g_n^{2,4})^\ast \times (\G_n^3)^\ast}
%n^3 \left| e^{n(h_{n,v}(w) - h_{n,v}(z))} \right|, \\
%\left| K_{n,u,v}^{\{1,2,3,4,5\},\{2,4\}} \right|
%& \le & \sup_{(w,z) \in (\g_n^{2,3,4})^\ast \times (\G_n^{2,4})^\ast}
%n^3 \left| e^{n(h_{n,v}(w) - h_{n,v} (z))} \right|.
%\end{eqnarray*}
%Also Lemmas \ref{lemfn1fn2} and \ref{lempn2} give,
%\begin{eqnarray*}
%\sup_{w \in (\g_n^3)^\ast} \Re ( h_{n,v}(w) - h_{n,v}(w_{n,v})) & = & 0, \\
%\sup_{z \in (\G_n^3)^\ast} \Re ( h_{n,v}(w_{n,v}) - h_{n,v}(z) ) & = & 0, \\
%\lim_{n \to \infty} \sup_{v \in V} \sup_{w \in (\g_n^{2,4})^\ast}
%n^{2\d} \Re ( h_{n,v}(w) - h_{n,v}(w_{n,v})) & \le & - c, \\
%\lim_{n \to \infty} \sup_{v \in V} \sup_{z \in (\G_n^{2,4})^\ast}
%n^{2\d} \Re ( h_{n,v}(w_{n,v}) - h_{n,v}(z)) & \le & - c.
%\end{eqnarray*}
%
We now give a proof of Theorem \ref{thSine}:

\begin{proof}[\textbf{Proof of Theorem \ref{thSine}:}]
The first part of this Theorem was shown in Lemma \ref{lemcp}
(note, in Lemma \ref{lemcp} we denoted $w_{\a,c}$ by $w_0$ for simplicity of notation).
It remains to show the asymptotic limit.  Since $\d \in (\frac13,\frac12)$, the above
equation and bounds imply that the result follows if and only if
\begin{equation}
\label{eqthSine1}
\lim_{n \to \infty} \sup_{u \in U,v \in V} \left| K_{n,u,v}^{3,3}
- 4\pi (C_{\a,c})^{-\rho_\a(c) (v-u)} \frac{\sin \left( \pi \rho_\a (c) (v - u) \right)}{v-u} \right| = 0,
\end{equation}
where $\rho_{\a} (c) = - \frac{1-\a}{\pi} \Im \left( w_0^{-1} \right)$
and $C_{\a,c} = \exp \left( \pi \; \frac{ \Re (w_0^{-1}) }{ \Im (w_0^{-1}) } \right)$.
%
%Recall that we wish to show that
%$$
%\lim_{n \to \infty} \sup_{u \in U, v \in V} \left| \frac{(C_{\a,c})^{v-u}}{n \rho_{\a} (c)}
%K_n \left( \left( q_n, c + \frac{u}{n \rho_{\a} (c)} \right),
%\left( q_n, c + \frac{v}{n \rho_{\a} (c)} \right) \right)
%- \frac{\sin(\pi (v - u))}{\pi (v - u)} \right| = 0.
%$$
%Rescaling gives
%$$
%\lim_{n \to \infty} \sup_{u \in U, v \in V} \left| \frac{(C_{\a,c})^{\rho_\a(c) (v-u)}}n
%K_n \left( \left( \an, c + \frac{u}n \right), \left( \an, c + \frac{v}n \right) \right)
%- \frac{\sin (\pi \rho_\a (c) (v - u))}{\pi (v - u)} \right|
%= 0.
%$$
%

Equations (\ref{eqIntExp}) and (\ref{eqknsub}) give
\begin{equation}
\label{eqkn33}
K_{n,u,v}^{3,3} = K_{n,u,v}^{++} + K_{n,u,v}^{--} + K_{n,u,v}^{+-} + K_{n,u,v}^{-+},
\end{equation}
for all $n$ sufficiently large, $u \in U$ and $v \in V$, where for $b,d \in \{-,+\}$,
$$
K_{n,u,v}^{bd} := n \int_{\g_{n,3}^d} dw \int_{\G_{n,3}^b} dz \;
\left( \frac{( 1 + \frac{v - u}{n z})^{n-\an} - 1}{v-u} \right) g_{n,v}(w,z)
\; e^{n(h_{n,v}(w) - h_{n,v}(z))},
$$
and $h_{n,v}, g_{n,v}$ are defined in equations (\ref{eqhn}) and (\ref{eqhngn}). Also,
recalling that $h_{n,v}'(w_{n,v}^\pm) = 0$ (see Lemma \ref{lemcp}), equations (\ref{eqlemcp1}),
(\ref{eqgamdef}) and (\ref{eqGamndef}) and Taylor expansions give
\begin{equation}
\label{eqTay}
\begin{array}{rcl}
h_{n,v}(\g_{n,3}^b(s))
& = & h_{n,v} (w_{n,v}^b) - \frac12 n^{-2\d} \left| h_{n,v}''(w_{n,v}^+) \right| s^2 + R_{n,v}^b(s), \\
h_{n,v} (\G_{n,3}^d(t))
& = & h_{n,v}(w_{n,v}^d) + \frac12 n^{-2\d} \left| h_{n,v}''(w_{n,v}^+) \right| t^2 + T_{n,v}^d(t),
\end{array}
\end{equation}
for all $n$ sufficiently large, $v \in V$, $b,d \in \{-,+\}$ and $s,t \in [-1,1]$, where
the remainders satisfy
\begin{equation}
\label{eqRnbs}
\sup_{(s,t) \in [-1,1]^2} \left| R_{n,v}^b(s) \right| + \left| T_{n,v}^d(t) \right|
\; \le \; C n^{-3\d},
\end{equation}
for some constant $C>0$.
Therefore equations (\ref{eqgamdef}) and (\ref{eqGamndef}) give
$$
K_{n,u,v}^{bd} = n^{1-2\d} \; C_{n,v}^{bd} e^{n(h_{n,v}(w_{n,v}^b) - h_{n,v}(w_{n,v}^d))}
\int_{-1}^1 ds \int_{-1}^1 dt \; A_{n,u,v}^{bd}(s,t) \;
e^{- \frac12 n^{1-2\d} |h_{n,v}''(w_{n,v}^+)| (s^2 + t^2)},
$$
for all $n$ sufficiently large, $u \in U$, $v \in V$ and $b,d \in \{-,+\}$, where
\begin{eqnarray*}
C_{n,v}^{bd}
%= n^{2\d} \left( \g_{n,3}^b \right)'(s) \left( \G_{n,3}^d \right)'(t)
& := & \left\{ \begin{array}{ccc}
b \; i e^{- i \Arg(h_{n,v}''(w_{n,v}^b))} & ; & b=d, \\
- b \; i & ; & b \neq d,
\end{array} \right. \\
A_{n,u,v}^{bd}(s,t) \
& := & \left( \frac{(1 + \frac{v - u}{n \G_{n,3}^d(t)})^{n-\an} - 1}{v-u} \right)
g_{n,v}(\g_{n,3}^b(s), \G_{n,3}^d(t)) e^{n(R_{n,v}^b(s) - T_{n,v}^d(t))},
\end{eqnarray*}
for all $s,t \in [-1,1]$.
%
%To see this note, as before,
%\begin{eqnarray*}
%\left( \g_{n,3}^+ \right)' (s)
%& = & i \varepsilon_0 \; n^{-\d} e^{ i \t_n }, \\
%\left( \G_{n,3}^+ \right)' (s)
%& = & \varepsilon_0 \; n^{-\d} e^{ i \t_n }, \\
%\left( \g_{n,3}^- \right)' (s)
%& = & - \overline{\left( \g_{n,3}^+ \right)' (s)}, \\
%\left( \G_{n,3}^- \right)' (s)
%& = & - \overline{\left( \G_{n,3}^+ \right)' (s)}.
%\end{eqnarray*}
%where $\t_n = - \frac12 \Arg(h_n''(w_n^+))$, $\varepsilon_0 = (-1)^{1_{C_0 \ge 0}}$. Therefore
%$$
%n^{2\d} \left( \g_{n,3}^b \right)'(s) \left( \G_{n,3}^d \right)'(t)
%= (i \varepsilon_0 \; e^{b \; i \t_n}) (d \; \varepsilon_0 e^{d \; i \t_n})
%= d \; i e^{(b + d) \; i \t_n}
%= d \; i e^{- (b + d) \; \frac{i}2 \Arg(h_n''(w_n^+))}.
%$$
%
Then, letting
$$
A_{n,u,v}^{bd} := \left( \frac{e^{(1 - \a) (v - u) (w_0^d)^{-1} } - 1}{v-u} \right)
g_{n,v}(w_{n,v}^b, w_{n,v}^d),
$$
and noting that $|C_{n,v}^{bd}| = |e^{n(h_{n,v}(w_{n,v}^b) - h_{n,v}(w_{n,v}^d))}| = 1$,
\begin{eqnarray*}
\lefteqn{\left| K_{n,u,v}^{bd} - \frac{2\pi C_{n,v}^{bd} A_{n,u,v}^{bd}}{|h_{n,v}''(w_{n,v}^+)|}
e^{n(h_{n,v}(w_{n,v}^b) - h_{n,v}(w_{n,v}^d))} \right|} \\
%& = & \left| n^{1-2\d}
%\int_{-1}^1 ds \int_{-1}^1 dt \; A_{n,u,v}^{bd}(s,t) \; e^{- \frac12 n^{1-2\d} |h_{n,v}''(w_{n,v}^+)| (s^2 + t^2)}
%- \frac{2\pi A_{n,u,v}^{bd}}{|h_{n,v}''(w_{n,v}^+)|} \right| \\
%& \le & \left| n^{1-2\d}
%\int_{-1}^1 ds \int_{-1}^1 dt \; \left( A_{n,u,v}^{bd}(s,t) - A_{n,u,v}^{bd} \right)
%e^{- \frac12 n^{1-2\d} |h_{n,v}''(w_{n,v}^+)| (s^2 + t^2)} \right| \\
%& + & \left| n^{1-2\d}
%\int_{-1}^1 ds \int_{-1}^1 dt \; A_{n,u,v}^{bd} \; e^{- \frac12 n^{1-2\d} |h_{n,v}''(w_{n,v}^+)| (s^2 + t^2)}
%- \frac{2\pi A_{n,u,v}^{bd}}{|h_{n,v}''(w_{n,v}^+)|} \right| \\
& \le & n^{1-2\d} \left( \sup_{(x,y) \in [-1,1]^2} \left| A_{n,u,v}^{bd}(x,y) - A_{n,u,v}^{bd} \right| \right)
\int_{-1}^1 ds \int_{-1}^1 dt \; e^{- \frac12 n^{1-2\d} |h_{n,v}''(w_{n,v}^+)| (s^2 + t^2)}  \\
& + & \left| A_{n,u,v}^{bd} \right| \; \left| n^{1-2\d}
\int_{-1}^1 ds \int_{-1}^1 dt \; e^{- \frac12 n^{1-2\d} |h_{n,v}''(w_{n,v}^+)| (s^2 + t^2)}
- \frac{2\pi}{|h_{n,v}''(w_{n,v}^+)|} \right|,
\end{eqnarray*}
for all $n$ sufficiently large, $u \in U$, $v \in V$ and $b,d \in \{-,+\}$
(note, it follows from equation (\ref{eqhmuhnbdd}) and Lemma \ref{lemcp} that 
$h_{n,v}''(w_{n,v}^+) \neq 0$ for all $n$ sufficiently large and $v \in V$, and so these
expressions are well-defined). A change of variables and equation (\ref{eqhngn}) then gives
\begin{eqnarray*}
\lefteqn{\left| K_{n,u,v}^{bd}
- b \; 2\pi i \left( \frac{e^{(1 - \a) (v - u) (w_0^b)^{-1} } - 1}{v-u} \right) \delta_{bd} \right|
\; \le \; \frac{2\pi}{|h_{n,v}''(w_{n,v}^+)|}
\sup_{(x,y) \in [-1,1]^2} \left| A_{n,u,v}^{bd}(x,y) - A_{n,u,v}^{bd} \right|} \\
& + & \delta_{bd} \left| \frac{e^{(1 - \a) (v - u) (w_0^+)^{-1} } - 1}{v-u} \right|
\left( 2\pi - \int_{-|h_{n,v}''(w_{n,v}^+)|^{\frac12}
n^{\frac12-\d}}^{|h_{n,v}''(w_{n,v}^+)|^{\frac12} n^{\frac12-\d}} ds
\int_{-|h_{n,v}''(w_{n,v}^+)|^{\frac12}
n^{\frac12-\d}}^{|h_{n,v}''(w_{n,v}^+)|^{\frac12} n^{\frac12-\d}} dt
\; e^{-\frac12 (s^2 + t^2)} \right),
\end{eqnarray*}
for all $n$ sufficiently large, $u \in U$, $v \in V$ and $b,d \in \{-,+\}$,
where $\delta_{bd} = 1$ if $b = d$ and $\delta_{bd} = 0$ otherwise.
%
%A change of variables first gives
%\begin{eqnarray*}
%\lefteqn{\left| K_{n,u,v}^{bd} - \frac{2\pi C_{n,v}^{bd} A_{n,u,v}^{bd}}{|h_{n,v}''(w_{n,v}^+)|}
%e^{n(h_{n,v}(w_{n,v}^b) - h_{n,v}(w_{n,v}^d))} \right|} \\
%& \le & \frac{2\pi}{|h_{n,v}''(w_{n,v}^+)|}
%\sup_{(x,y) \in [-1,1]^2} \left| A_{n,u,v}^{bd}(x,y) - A_{n,u,v}^{bd} \right| \\
%& + & \frac{|A_{n,u,v}^{bd}|}{|h_{n,v}''(w_{n,v}^+)|}
%\left( 2\pi - \int_{-|h_{n,v}''(w_{n,v}^+)|^{\frac12}
%n^{\frac12-\d}}^{|h_{n,v}''(w_{n,v}^+)|^{\frac12} n^{\frac12-\d}} ds
%\int_{-|h_{n,v}''(w_{n,v}^+)|^{\frac12}
%n^{\frac12-\d}}^{|h_{n,v}''(w_{n,v}^+)|^{\frac12} n^{\frac12-\d}} dt
%\; e^{-\frac12 (s^2 + t^2)} \right).
%\end{eqnarray*} 
%Also equation (\ref{eqhngn}) gives $A_n^{bd} = 0$ if $b \neq d$ and
%$$
%C_n^{bb} A_n^{bb}
%\; = \; b \; i e^{- i \Arg(h_{n,v}''(w_{n,v}^b))}
%\left( e^{(1 - \a) (v - u) (w_0^b)^{-1} } - 1 \right) h_{n,v}''(w_{n,v}^b)
%\; = \; b \; i \left( e^{(1 - \a) (v - u) (w_0^b)^{-1} }  - 1 \right) | h_{n,v}''(w_{n,v}^+) |.
%$$
%
Recalling that $\d \in (\frac13,\frac12)$ and $h''(w_0) \neq 0$ (see Lemma \ref{lemcp}),
equations (\ref{eqhngn}), (\ref{eqlemcp1}), (\ref{eqhmuhnbdd}) and (\ref{eqRnbs}) give
$$
\lim_{n \to \infty} \sup_{u \in U, v \in V}\left| K_{n,u,v}^{bd}
- b \; 2\pi i \left( \frac{e^{(1 - \a) (v - u) (w_0^b)^{-1} } - 1}{v-u} \right) \delta_{bd} \right| = 0,
$$
for all $b,d \in \{-,+\}$.
Equation (\ref{eqthSine1}) then follows from equation (\ref{eqkn33}), as required.
%
%To see note,
%$$
%\lim_{n \to \infty} \sup_{u \in U, v \in V}\left| K_{n,u,v}^{3,3}
%- 2\pi i \left( \frac{e^{(1 - \a) (v - u) (w_0^+)^{-1} } - 1}{v-u}
%- \frac{e^{(1 - \a) (v - u) (w_0^b)^{-1} } - 1}{v-u} \right) \right| = 0,
%$$
%Suppose $u \neq v$. Note
%$$
%e^{z}-e^{\bar{z}}
%= e^{\Re(z)} \left( e^{i\Im(z)} - e^{-i\Im(z)} \right)
%= 2i e^{\Re(z)} \sin(\Im(z)).
%$$
%Therefore
%\begin{eqnarray*}
%\lefteqn{2\pi i \left( e^{(1 - \a) (v - u) (w_0^+)^{-1} } 
%- e^{(1 - \a) (v - u) (w_0^-)^{-1} } \right)} \\
%& = & - 4\pi \; \exp \left( (1 - \a) (v - u) \Re(w_0^{-1}) \right)
%\; \sin \left( (1 - \a) (v - u) \Im(w_0^{-1}) \right) \\
%& = & 4\pi \; \exp \left( (1 - \a) (v - u) \Re(w_0^{-1}) \right)
%\; \sin \left( - (1 - \a) (v - u) \Im(w_0^{-1}) \right) \\
%& = & 4\pi \; \exp \left( - \pi \rho_{\a} (c) \frac{ \Re (w_0^{-1}) }{ \Im (w_0^{-1}) } (v - u) \right)
%\; \sin \left( \pi \rho_{\a} (c) (v - u) \right),
%\end{eqnarray*}
%since $\rho_{\a} (c) = - \frac{1-\a}{\pi} \Im \left( w_0^{-1} \right)$. The result follows
%since $C_{\a,c} := \exp \left( \pi \; \frac{ \Re (w_0^{-1}) }{ \Im (w_0^{-1}) } \right)$.
%
%Now suppose $u = v$. Then
%\begin{eqnarray*}
%2\pi i \left( \frac{e^{(1 - \a) (v - u) (w_0^+)^{-1} } - 1}{v-u} \right) 
%- 2\pi i \left( \frac{e^{(1 - \a) (v - u) (w_0^-)^{-1} } - 1}{v-u} \right)
%& = & 2\pi i \left( \frac{1 - \a}{w_0^+} \right) - 2\pi i \left( \frac{1 - \a}{w_0^-} \right) \\
%& = & 2\pi i \frac{(1 - \a)(w_0^- - w_0^+)}{|w_0|^2} \\
%& = & 4\pi (1 - \a) \frac{\Im(w_0)}{|w_0|^2} = 4 \pi^2 \rho_\a(c).
%\end{eqnarray*}
%
\end{proof}

\subsection{Calculations}
\label{secC}

In this section we omit superscripts when no confusion is possible. Also we denote the range of
a contour $\g$ by $\g^\ast$. Moreover recall that $\t_{n,v} = - \frac12 \Arg(h_{n,v}''(w_{n,v}^+))$
for all $n$ sufficiently large and $v \in V$, and $\varepsilon_0 := (-1)^{1_{C_0 \ge 0}}$, where
$C_0$ is given in equation (\ref{eqC0}). Finally note equations (\ref{eqgamdef}), (\ref{eqGamndef})
and Lemma \ref{lemcp} give
\begin{equation}
\label{eqdeltan}
0
\; = \; \lim_{n \to \infty} \sup_{v \in V} \sup_{w \in \g_n^\ast} ||w| - |w_0||
\; = \; \lim_{n \to \infty} \sup_{v \in V} \sup_{w \in \g_{n,3}^\ast} |w-w_0|
\; = \; \lim_{n \to \infty} \sup_{v \in V} \sup_{z \in \G_{n,3}^\ast} |z-w_0|.
\end{equation}
%
%The first part follows since, for example
%$$
%\sup_{v \in V} \sup_{w \in (\g_n^{1,2})^\ast} ||w| - |w_0||
%\le \sup_{v \in V} ||w_{n,v,1}| - |w_0||
%\le \sup_{v \in V} |w_{n,v,1} - w_0|.
%$$
%

\begin{lem}
\label{lemBdRem}
There exists a constant $C>1$, and a choice
of $y_{n,u,v}$, $t_0$ and $T_0$ in equation (\ref{eqGamndef}) for which
\begin{eqnarray*}
\left| K_{n,u,v}^{\{1,2,3,4,5\},1} + K_{n,u,v}^{\{1,2,3,4,5\},5}\right|
& \le & C^{-n}, \\
\left| K_{n,u,v}^{\{1,2,4,5\},3} \right|
& \le & \sup_{(w,z) \in (\g_n^{2,4})^\ast \times (\G_n^3)^\ast}
n^3 \left| e^{n(h_{n,v}(w) - h_{n,v}(z))} \right|, \\
\left| K_{n,u,v}^{\{1,2,3,4,5\},\{2,4\}} \right|
& \le & \sup_{(w,z) \in (\g_n^{2,3,4})^\ast \times (\G_n^{2,4})^\ast}
n^3 \left| e^{n(h_{n,v}(w) - h_{n,v} (z))} \right|,
\end{eqnarray*}
for all $n$ sufficiently large, $u \in U$ and $v \in V$.
\end{lem}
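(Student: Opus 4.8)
The plan is to estimate the integrand of $K_{n,u,v}^{A,B}$ in (\ref{eqknsub}) via the factorisation (\ref{eqIntExp}), which rewrites it as
\[
n \left( \frac{(1 + \tfrac{v - u}{n z})^{n-\an} - 1}{v-u} \right) g_{n,v}(w,z) \; e^{n(h_{n,v}(w) - h_{n,v}(z))}.
\]
Call the product of the first two factors the prefactor $P_{n,v}(w,z)$ and $e^{n(h_{n,v}(w)-h_{n,v}(z))}$ the exponential factor. All three displayed bounds follow once one has: (a) a bound $\sup|P_{n,v}| \le C n^{2}$, uniform over $u \in U$, $v \in V$, $n$ large and $(w,z)$ in the relevant product of contour pieces; (b) uniform $O(1)$ bounds on the lengths of those pieces (which, for $\G_n^1$ and $\G_n^5$, already involves the choices of $y_{n,u,v}$, $t_0$, $T_0$); and, for the first bound, (c) exponential smallness of the exponential factor on $\g_n \times (\G_n^1 \cup \G_n^5)$.

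For (a) I would use (\ref{eqdeltan}), the constructions (\ref{eqgamdef})--(\ref{eqGamndef}), the definition (\ref{eqG}) of $\G$, and $\Im(w_0) > 0$ to check that on $(\G_n^3)^\ast$ and $(\G_n^{2,4})^\ast$ the variable $z$ stays bounded below in modulus and a fixed distance from $\R$, hence from the poles $x_i^{(n)} - \tfrac vn - c$. Factoring $(v-u)$ out of $(1+\tfrac{v-u}{nz})^{n-\an} - 1$ and estimating the resulting power series (its $k$-th term is at most $\tfrac1{k!}(\tfrac{(n-\an)|v-u|}{n|z|})^k$, with $\tfrac{(n-\an)|v-u|}{n|z|}$ uniformly $O(1)$) shows $|\tfrac{(1+\frac{v-u}{nz})^{n-\an} - 1}{v-u}| = O(1)$ uniformly in $u,v$. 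The factor $g_{n,v}(w,z)$ is $O(1)$ except where $w$ lies on an arc $\g_{n,k}^\pm$ ($k \in \{2,4\}$) within order $|w_0|/n$ of $\R$: there $|h_{n,v}'(w)| \le |\Im w|^{-1} + |w|^{-1} = O(n)$ since $|\Im w| \ge |w_0|\sin\tfrac1n \ge \tfrac{|w_0|}{2n}$, while $|w|$ and $|w-z|$ remain bounded below because $z$ stays off $\R$; hence $g_{n,v}(w,z) = O(n)$ and $|P_{n,v}| = O(n^2)$. With (b) this gives $\int_{\g_n^A}|dw|\int_{\G_n^B}|dz|\,|P_{n,v}(w,z)| \le C\cdot O(n^2) \le n^3$ for $n$ large, so bounding the exponential factor by its supremum yields the second and third inequalities.

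For the first inequality one must choose $T_0$, $t_0$, $y_{n,u,v}$ so that $\sup_{w\in\g_n}|e^{nh_{n,v}(w)}| \cdot \sup_{z\in\G_n^1\cup\G_n^5}|e^{-nh_{n,v}(z)}| \le C^{-n}$. The maximisation of $|e^h|$ along the limit contour $\g$ (established in the steepest descent analysis; cf.\ Lemmas \ref{lemfn1fn2} and \ref{lempn2}), together with (\ref{eqlemcp1}) and (\ref{eqdeltan}), gives $\sup_{w\in\g_n}\Re h_{n,v}(w) = \Re h(w_0) + o(1)$ uniformly in $v$. On $\G_n^5$, $|z|$ is of order $T_0|w_0|$, so $\Re h_{n,v}(z) = \tfrac1n\sum_i\log|z+c+\tfrac vn-x_i^{(n)}| - \tfrac{n-\an}n\log|z| \ge \a\log(T_0|w_0|) - O(1)$, which exceeds $\Re h(w_0)+1$ once $T_0|w_0|$ also dominates $\max_i|x_i^{(n)}-c|$. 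On $\G_n^1$, taking $t_0 \in (0,1)$ small confines the chord to a small disc about the origin, where $\Re h_{n,v}(z) \ge -(1-\a+o(1))\log|z| - O(1)$ is again large, provided $y_{n,u,v}$ is taken at (a point near) the maximiser of $\Re h_{n,v}$ over the straddling gap $(x_j^{(n)}-\tfrac vn-c, x_{j-1}^{(n)}-\tfrac vn-c)$ and the contribution of the two neighbouring poles along the chord is controlled --- here one exploits that near $y_{n,u,v}$ the factor $(z+\tfrac{v-u}n)^{n-\an}$ in the integrand of (\ref{eqknsub}) has modulus at most $(x_{j-1}^{(n)} - x_j^{(n)})^{n-\an}$, which counteracts the pole factors. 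Hence $\Re(h_{n,v}(w) - h_{n,v}(z)) \le -\varepsilon < 0$ on $\g_n \times (\G_n^1\cup\G_n^5)$ for $n$ large, and with (a), (b) this gives $|K_{n,u,v}^{\{1,2,3,4,5\},1} + K_{n,u,v}^{\{1,2,3,4,5\},5}| \le C^{-n}$.

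The hard part is the estimate of the integrand on the chord $\G_n^1$, uniformly in $u \in U$, $v \in V$: its base point $y_{n,u,v}$ is squeezed between the eigenvalues $x_j^{(n)}$, $x_{j-1}^{(n)}$ straddling $c+\tfrac un$, so both $e^{-nh_{n,v}}$ and the prefactor can be large there, and one must balance this against the small factor $(z+\tfrac{v-u}n)^{n-\an}$ (of modulus below $(x_{j-1}^{(n)}-x_j^{(n)})^{n-\an}$ near $y_{n,u,v}$) and the choice of $y_{n,u,v}$ at the maximiser of $\Re h_{n,v}$ over the straddling gap, then propagate the bound up the chord using the descent structure of $\G(w_0^+,\cdot)$ near $w_0$ and that $\Re h \to +\infty$ as the argument tends to $0$. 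Arranging all of this with a single fixed $T_0$ and uniformly over the whole family $\{x^{(n)}\}_{n \ge 1}$ is where most of the effort goes.
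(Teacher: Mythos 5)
Your treatment of the chord piece $\Gamma_n^1$ is where the argument genuinely fails, and it is exactly the part the paper has to work hardest on. The concluding claim that $\Re ( h_{n,v}(w) - h_{n,v}(z) ) \le -\varepsilon < 0$ on $\gamma_n \times (\Gamma_n^1 \cup \Gamma_n^5)$ is false in general: Hypothesis \ref{hypWeakConv} imposes no lower bound on the spacings of $x^{(n)}$, so the base point $y_{n,u,v}$, pinned inside the gap $(x_j^{(n)} - \frac{v}{n} - c,\, x_{j-1}^{(n)} - \frac{v}{n} - c)$, can be at super-exponentially small distance from the adjacent poles, and when $\mu$ has an atom at $c$ (allowed, provided $\mu[\{c\}] < 1 - \alpha$) a positive proportion of the poles cluster near the base. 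Hence $-\Re h_{n,v}(z)$ is unbounded (over $n$ and over admissible $x^{(n)}$, even at the maximiser of $\Re h_{n,v}$ over the gap), and your prefactor bound (a) also fails there: $h_{n,v}'(z)$ in $g_{n,v}$ blows up, and near $y_{n,u,v}$ one has $|z| = O(1/n)$, so the difference-quotient factor in (\ref{eqIntExp}) is not $O(1)$ either. The compensating factor you invoke is moreover not available in the form you use it: the integrand of (\ref{eqknsub}) contains the difference $(z + \frac{v-u}{n})^{n - q_n} - z^{n - q_n}$, and the second term has modulus about $(|u-v|/n)^{n - q_n}$, which is not dominated by $(x_{j-1}^{(n)} - x_j^{(n)})^{n - q_n}$ when the gap is much smaller than $1/n$. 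The paper resolves this by first rewriting $K_{n,u,v}^{\{1,2,3,4,5\},1}$ via Cauchy's theorem (as in Proposition \ref{prIntExpKl}) so that the single power $(z + \frac{v-u}{n})^{n - q_n - 1}$ appears, plus two explicit boundary integrals; it then splits the poles into those with $|x_i - c| \ge \epsilon$ (harmless, since $|z| \le 2 t_0 |w_0|$ on $\Gamma_n^1$ by (\ref{eqGn1bdd})) and those with $|x_i - c| < \epsilon$, whose number is at most $\frac{n}{2}(1 - \alpha + \mu[\{c\}]) < n - q_n$ because $c \in A_\alpha$ forces $\mu[\{c\}] < 1 - \alpha$ (proof of Proposition \ref{prAa}); each nearby pole factor is paired with one power of $|z + \frac{v-u}{n}|$, the ratio is controlled along the nearly vertical chord by twice its value at the real base point, and that base value is made $O(1)$ by taking $y_{n,u,v}$ close to $\frac{u-v}{n}$ --- not at the maximiser of $\Re h_{n,v}$ over the gap, which does nothing against the clustered poles. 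The leftover $\sim \frac{n}{2}(1 - \alpha - \mu[\{c\}])$ powers of $|z + \frac{v-u}{n}| \le C t_0$ then give the $C^{-n}$ decay once $t_0$ is fixed small. None of this counting/pairing mechanism, nor any use of $\mu[\{c\}] < 1 - \alpha$, appears in your sketch; you flag the difficulty but do not resolve it, and the quantities you propose to balance do not balance.

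A secondary gap: the second and third bounds of the lemma have suprema only over $(\gamma_n^{2,4})^\ast$ resp.\ $(\gamma_n^{2,3,4})^\ast$, yet the $w$-integration runs over all of $\gamma_n$, including the arcs $\gamma_{n,1}^\pm, \gamma_{n,5}^\pm$ where $\Im w$ can vanish and $w$ can sit on (or arbitrarily near) a real pole $x_i^{(n)} - \frac{v}{n} - c$; there your estimate $|h_{n,v}'(w)| \le |\Im w|^{-1} + |w|^{-1} = O(n)$ is vacuous and $g_{n,v}$ in (\ref{eqhngn}) is unbounded. To land on the stated right-hand sides you must dominate the contribution of these arcs by the supremum over $\gamma_n^{2,3,4}$, e.g.\ by working with the original form of the integrand (the zero of $\prod_i (w + c + \frac{v}{n} - x_i)$ compensates the offending factor, at the cost of a factor $n$) together with the fact that the angular maximum of $\Re h_{n,v}$ on the circle is attained away from the real axis, as in the proof of Lemma \ref{lemfn1fn2}. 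Your handling of $\Gamma_n^5$ and of $\Gamma_n^{2,4}$ is otherwise in the same spirit as the paper's direct estimates, but as it stands the proposal does not prove the first inequality and does not quite deliver the precise form of the other two.
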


\begin{proof}

Consider $K_{n,u,v}^{\{1,2,3,4,5\},5}$. Equation (\ref{eqknsub}) gives
\begin{eqnarray*}
\lefteqn{K_{n,u,v}^{\{1,2,3,4,5\},5}} \\
& = & \int_{\g_n} dw \int_{\G_n^5} dz
\left( \frac{(z + \frac{v - u}n)^{n-\an} - z^{n-\an}}{(v-u) w^{n-\an+1}} \right)
\sum_{j=1}^n \frac{c + \frac{v}n - x_j}{(z + c + \frac{v}n - x_j)^2}
\prod_{i \neq j} \left( \frac{w + c + \frac{v}n - x_i}{z + c + \frac{v}n - x_i} \right),
\end{eqnarray*}
for all $n$ sufficiently large, $u \in U$ and $v \in V$. Recall that, for all $n$ sufficiently
large and $v \in V$, $\G_n^5$ spans a segment of the circle centered at the origin, with
radius $T_0 |w_{n,v} + \varepsilon_0 \; n^{-\d} e^{ i \t_{n,v} }|$, where $T_0>1$ (see equations
(\ref{eqG}) and (\ref{eqGamndef})). It thus follows from equation (\ref{eqdeltan}) that there
exists a constant $C>0$, and a function $N : (0,\infty) \to \N$ for which
\begin{equation}
\label{eqKn123455}
\left| K_{n,u,v}^{\{1,2,3,4,5\},5} \right| \le C^n T_0^{-\an},
\end{equation}
for all $T_0$ sufficiently large, $n \ge N(T_0)$, $u \in U$ and $v \in V$.

\vspace{0.5cm}

Consider $K_{n,u,v}^{\{1,2,3,4,5\},\{2,4\}}$. Equation (\ref{eqknsub}) gives
\begin{eqnarray*}
\lefteqn{K_{n,u,v}^{\{1,2,3,4,5\},\{2,4\}}} \\
& = & \int_{\g_n} dw \int_{\G_n^{2,4}} dz
\left( \frac{(z + \frac{v - u}n)^{n-\an} - z^{n-\an}}{(v-u)w^{n-\an+1}} \right)
\sum_{j=1}^n \frac{c + \frac{v}n - x_j}{(z + c + \frac{v}n - x_j)^2}
\prod_{i \neq j} \left( \frac{w + c + \frac{v}n - x_i}{z + c + \frac{v}n - x_i} \right),
\end{eqnarray*}
for all $n$ sufficiently large, $u \in U$ and $v \in V$. Recall that, for all $n$ sufficiently
large and $v \in V$, $\G_{n,2}^+(s) = \G(w_{n,v} - \varepsilon_0 \; n^{-\d} e^{ i \t_{n,v} }, s)$
for all $s \in [t_0,1)$ and $\G_{n,4}^+(s) = \G(w_{n,v} + \varepsilon_0 \; n^{-\d} e^{ i \t_{n,v} }, s)$
for all $s \in (1,T_0)$, where $\G$ is given in equation (\ref{eqG}). Equations (\ref{eqG}) and
(\ref{eqdeltan}) imply that there exists an $\e > 0$ for which
$$
\inf_{v \in V} \inf_{x \in \R, z \in (\G_n^{2,4})^\ast} |z - x| \ge t_0 \e,
$$
for all $t_0$ sufficiently small and $n$ sufficiently large (chosen independently).
%
%Recall that $|\G(w_{n,v} \pm n^{-\d} e^{ i \t_{n,v} }, s)| = s |w_{n,v} \pm n^{-\d} e^{ i \t_{n,v} }|$,
%and so
%$$
%\inf_{x \in \R} \left| \G(w_{n,v} \pm n^{-\d} e^{ i \t_{n,v} }, s) - x \right|
%\ge s |w_{n,v} \pm n^{-\d} e^{ i \t_{n,v} }|
%\sin(\Arg(\G(w_{n,v} \pm n^{-\d} e^{ i \t_{n,v} }, s))).
%$$
%The result follows if we can show that
%$$
%\inf_{s \in (0,T_0]} \sin( \Arg(\G(w_0,s)))
%= \inf_{s \in (0,T_0]}
%\sin \left( \arccos \left( \cos(\Arg(w_0)) \frac{2s\log(s)}{s^2-1} \right) \right) > 0.
%$$
%Recalling that $\lim_{s \to 0} \sin( \Arg(\G(w_0,s))) = 1$ and that $s \mapsto \frac{2s\log(s)}{s^2-1}$
%(for all $s \in (0,T_0]$) is a well defined continuous function with range $(0,1]$, this follows if we
%can show
%$$
%\sin \left( \arccos \left( \cos(\Arg(w_0)) \frac{2s\log(s)}{s^2-1} \right) \right) > 0,
%$$
%for all $s > 0$. This follows easily since
%$$
%-1 < \cos(\Arg(w_0)) \frac{2s\log(s)}{s^2-1} < 1,
%$$
%for all $s > 0$.
%
Therefore we can choose the constant $C>0$, and the function $N : (0,\infty) \to \N$, so that
\begin{eqnarray*}
\lefteqn{\left| K_{n,u,v}^{\{1,2,3,4,5\},\{2,4\}} \right|} \\
& \le & C \frac{T_0^2}{t_0} \left( 1 + e^{C t_0^{-1}} \right)
\sup_{(w,z) \in \g_n^\ast \times (\G_n^{2,4})^\ast} \frac{|z|^{n - \an}}{|w|^{n - \an}}
\sum_{j=1}^n \frac1{|z + c + \frac{v}n - x_j|}
\prod_{i \neq j} \left| \frac{w + c + \frac{v}n - x_i}{z + c + \frac{v}n - x_i} \right|,
\end{eqnarray*}
for all $t_0$ sufficiently small, $n \ge N(t_0)$, $u \in U$ and $v \in V$.
%
%To see this note, letting $z_{n,v,1} := w_{n,v} - \varepsilon_0 \; n^{-\d} e^{ i \t_{n,v} }$
%and $z_{n,v,2} := w_{n,v} + \varepsilon_0 \; n^{-\d} e^{ i \t_{n,v} }$, we have
%$\G_{n,2}^+(s) = \G(z_{n,v,1}, s) = s|z_{n,v,1}| e^{i \phi_{n,v,1}(s)}$ for all $s \in [t_0,1)$
%and $\G_{n,4}^+(s) = \G(z_{n,v,2},s) = s|z_{n,v,2}| e^{i \phi_{n,v,2}(s)}$
%for all $s \in (1,T_0)$. Also
%$$
%\sup_{v \in V, s \in [t_0,T_0]} \left| \frac{d}{ds} s |z_{n,v,j}| e^{i \phi_{n,v,j}(s)} \right|
%= \sup_{v \in V, s \in [t_0,T_0]} \left| |z_{n,v,j}| e^{i \phi_{n,v,j}(s)}
%+ s |z_{n,v,j}| e^{i \phi_{n,v,j}(s)} i \phi_{n,v,j}'(s) \right|
%\le C T_0,
%$$
%for $n$ large, where $C>0$ is a constant, and
%$$
%\sup_{u \in U, v \in V, z \in (\G_n^{2,4})^\ast}
%\left| 1 + \frac{v - u}{n z} \right|^{n - \an}
%\le \sup_{u \in U, v \in V} \left( 1 + \frac{|v - u|}{n t_0 \e} \right)^{n - \an}.
%$$
%
Equations (\ref{eqhn}), (\ref{eqgamdef}) and (\ref{eqdeltan}) then show that
we can choose $C$ and $N$ so that
\begin{equation}
\label{eqKn1234524}
\left| K_{n,u,v}^{\{1,2,3,4,5\},\{2,4\}} \right|
\le C n^2 \; \frac{T_0^2}{t_0} \left( 1 + e^{C t_0^{-1}} \right)
\sup_{(w,z) \in (\g_n^{2,3,4})^\ast \times (\G_n^{2,4})^\ast}
\left| e^{n(h_{n,v}(w) - h_{n,v}(z))} \right|,
\end{equation}
for all $t_0$ sufficiently small, $n \ge N(t_0)$, $u \in U$ and $v \in V$.
Similarly we can choose $C$ so that
\begin{equation}
\label{eqKn12453}
\left| K_{n,u,v}^{\{1,2,4,5\},3} \right|
\le C n^2 \sup_{(w,z) \in (\g_n^{2,4})^\ast \times (\G_n^3)^\ast}
\left| e^{n(h_{n,v}(w) - h_{n,v}(z))} \right|,
\end{equation}
for all $n$ sufficiently large, $u \in U$ and $v \in V$.
%
%To see this note,
%\begin{eqnarray*}
%\lefteqn{K_n^{\{1,2,4,5\},3}} \\
%& = & \int_{\g_n^{1,2,4,5}} dw \int_{\G_n^3} dz
%\left( \frac{z^{n-\an} - (z + \frac{v - u}n)^{n-\an}}{w^{n-\an+1}} \right)
%\sum_{j=1}^n \frac{c + \frac{v}n - x_j}{(z + c + \frac{v}n - x_j)^2}
%\prod_{i \neq j} \left( \frac{w + c + \frac{v}n - x_i}{z + c + \frac{v}n - x_i} \right).
%\end{eqnarray*}
%Thus, since $w_n \to w_{\a,c}$ (Lemma \ref{lemcp}), it follows from equations (\ref{eqgamdef}) and
%(\ref{eqGamndef}) that there exists some $C > 0$ for which, for $n$ sufficiently large,
%$$
%\left| K_n^{\{1,2,4,5\},3} \right|
%\le C \sup_{(w,z) \in (\g_n^{1,2,4,5})^\ast \times (\G_n^3)^\ast}
%\frac{|z|^{n - \an}}{|w|^{n - \an}}
%\sum_{j=1}^n \frac1{|z + c + \frac{v}n - x_j|}
%\prod_{i \neq j} \left| \frac{w + c + \frac{v}n - x_i}{z + c + \frac{v}n - x_i} \right|.
%$$
%The required result then follows from equations (\ref{eqhn}) and (\ref{eqlemBdRem1}).
%

\vspace{0.5cm}

Consider $K_{n,u,v}^{\{1,2,3,4,5\},1}$. Recall that
$\G_{n,1}^+(s) = (1-s) y_{n,u,v} + s \; \G(w_{n,v} - \varepsilon_0 \; n^{-\d} e^{ i \t_{n,v} }, t_0)$
for all $s \in [0,1)$, where $y_{n,u,v} \in (x_j - \frac{v}n - c, x_{j-1} - \frac{v}n - c)$ for
that value of $j$ which satisfies $c + \frac{u}n \in [x_j,x_{j-1})$ (see equation (\ref{eqGamndef})).
Also recall that $|G(w,s)| = s|w|$ for all $s>0$ and $w \in \C$ with $\Im(w) > 0$. It thus
follows from equation (\ref{eqdeltan}) that we can choose $y_{n,u,v}$ so that
\begin{equation}
\label{eqGn1bdd}
\sup_{u \in U, v \in V} \sup_{z \in (\G_n^1)^\ast} |z| \le 2 t_0 |w_0|,
\end{equation}
for all $n$ sufficiently large.
%
%Take, for example, $y_{n,u,v} = \frac{u-v}n$ whenever $c + \frac{u}n \not\in \{x_1,\ldots,x_n\}$, and
%$y_{n,u,v} = \frac{u-v}n + n^{-n}$ otherwise.
%
Thus there exists a choice of $N : (0,\infty) \to \N$ for
which the following is well-defined for all $t_0$ sufficiently small, $n \ge N(t_0)$,
$u \in U$ and $v \in V$:
$$
\int_{\g_n} dw \int_{\G_n^1} dz \;
\frac{(z + \frac{v - u}n)^{n - \an - 1} }{w^{n - \an + 1}} \frac1{w-z}
\prod_{i=1}^n \left( \frac{w +c - \frac{v}n - x_i}{z + c - \frac{v}n - x_i} \right).
$$
Using Cauchy's Theorem to perturb the contours in a similar manner to that described
in Proposition \ref{prIntExpKl}, this quantity can be related to $K_{n,u,v}^{\{1,2,3,4,5\},1}$
in the following way:
\begin{eqnarray*}
\lefteqn{K_{n,u,v}^{\{1,2,3,4,5\},1}
= \left( \frac{n - \an}n \right) \int_{\g_n} dw \int_{\G_n^1} dz \;
\frac{(z + \frac{v - u}n)^{n - \an - 1} }{w^{n - \an + 1}} \frac1{w-z}
\prod_{i=1}^n \left( \frac{w + c - \frac{v}n - x_i}{z + c - \frac{v}n - x_i} \right)} \\
& - & \int_{\g_n} dw \;
\left( \frac{(\G_{n,1}^-(0) + \frac{v - u}n)^{n-\an} - \G_{n,1}^-(0)^{n-\an}}{(v-u)w^{n - \an + 1}} \right)
\frac{\G_{n,1}^-(0)}{w - \G_{n,1}^-(0)}
\prod_{i=1}^n \left( \frac{w + c - \frac{v}n - x_i}{\G_{n,1}^-(0) + c - \frac{v}n - x_i} \right) \\
& + & \int_{\g_n} dw \;
\left( \frac{(\G_{n,1}^+(1) + \frac{v - u}n)^{n-\an} - \G_{n,1}^+(1)^{n-\an}}{(v-u)w^{n - \an + 1}} \right)
\frac{\G_{n,1}^+(1)}{w - \G_{n,1}^+(1)}
\prod_{i=1}^n \left( \frac{w + c - \frac{v}n - x_i}{\G_{n,1}^+(1) + c - \frac{v}n - x_i} \right),
\end{eqnarray*}
for all $t_0$ sufficiently small, $n \ge N(t_0)$, $u \in U$ and $v \in V$.
Therefore we can choose $C$ and $N$ so that
$$
\left| K_{n,u,v}^{\{1,2,3,4,5\},1} \right|
\le C^n \sup_{z \in (\G_n^1)^\ast} \left| z + \frac{v - u}n \right|^{n - \an - 1}
\prod_{i=1}^n \frac1{|z + c - \frac{v}n - x_i|},
$$
for all $t_0$ sufficiently small, $n \ge N(t_0)$, $u \in U$ and $v \in V$.
It thus follows from equation (\ref{eqGn1bdd}) that, for any fixed $\e > 0$,
we can choose $C$ and $N$ so that
$$
\left| K_{n,u,v}^{\{1,2,3,4,5\},1} \right|
\le C^n \sup_{z \in (\G_n^1)^\ast} \left| z + \frac{v - u}n \right|^{n - \an - 1}
\prod_{i ; |x_i - c| < \e} \frac1{|z + c - \frac{v}n - x_i|},
$$
for all $t_0$ sufficiently small, $n \ge N(t_0)$, $u \in U$ and $v \in V$.
%
%This follows since,
%$$
%\sup_{u \in U, v \in V} \sup_{z \in (\G_n^1)^\ast} |z| \le 2 t_0 |w_0|,
%$$
%for all $t_0$ sufficiently small and $n \ge N(t_0)$. Therefore
%$$
%\sup_{ (i,z) \in \{i: |x_i - c| \ge \e \} \times (\G_n^1)^\ast} \frac1{|z + c - \frac{v}n - x_i|}
%= \sup_{ (i,z) \in \{i: |x_i - c| \ge \e \} \times (\G_n^1)^\ast} \frac1{|z - \frac{v}n - (x_i - c)|}
%\le \frac2\e,
%$$
%for all $t_0$ sufficiently small and $n \ge N(t_0)$, for some choice of $N$.
%
Recalling that $c \in A_\a$, and so $\mu[\{c\}] < 1 - \a$ (see the proof of Proposition \ref{prAa}),
we fix $\e > 0$ so that $\frac1n \# \{i : |x_i - c| < \e \} < \frac12 ( 1 - \a + \mu[\{c\}])$
for all $n$ sufficiently large (which is always possible by hypothesis \ref{hypWeakConv}).
%
%To see this note,
%$$
%\mu[\{c\}] 
%\; = \; \mu \left[ \bigcap_{m \ge 1} \left( c - \frac1m, c + \frac1m \right) \right]
%\; = \; \lim_{m \to \infty} \mu \left[ \left( c - \frac1m, c + \frac1m \right) \right].
%$$
%Thus there exists some $\e > 0$ for which $\mu[c - \e, c + \e] < 1 - \a$. Without loss
%of generality we can choose this so that $c \pm \e$ are continuity points of the
%distribution function $\mu[a, \cdot]$, since these points are dense in $[a,b]$.
%Therefore, since $\frac1n \sum_i \d_{x_i} \to \mu$ weakly,
%$\mu_n[c - \e, c + \e] \to \mu_0[c - \e, c + \e]$.
%
Write
$$
\left| K_{n,u,v}^{\{1,2,3,4,5\},1} \right|
\le C^n \sup_{z \in (\G_n^1)^\ast}
\left| z + \frac{v - u}n \right|^{n - \an - 1 - \# \{i : |x_i - c| < \e \}}
\prod_{i ; |x_i - c| < \e} \left| \frac{z + \frac{v - u}n}{z + c - \frac{v}n - x_i} \right|,
$$
for all $t_0$ sufficiently small, $n \ge N(t_0)$, $u \in U$ and $v \in V$.
Equation (\ref{eqGn1bdd}) then shows that we can choose $C$ and $N$ so that
$$
\left| K_{n,u,v}^{\{1,2,3,4,5\},1} \right|
\le C^n \; (t_0)^{\frac{n}2 ( 1 - \a - \mu[\{c\}] )} \;
\sup_{z \in (\G_n^1)^\ast} \prod_{i ; |x_i - c| < \e}
\left( 1 + \left| \frac{\frac{v - u}n - (c - \frac{v}n - x_i)}{z + c - \frac{v}n - x_i} \right| \right),
$$
for all $t_0$ sufficiently small, $n \ge N(t_0)$, $u \in U$ and $v \in V$. Moreover, for any fixed
$\xi > 0$, equations (\ref{eqG}) and (\ref{eqGamndef}) imply that we can choose $N$ so that
$\sup_{v \in V} |\Arg(\G_{n,1}^+(1)) - \frac{\pi}2| \le \xi$ for all $t_0$ sufficiently small and
$n \ge N(t_0)$.
%
%To see this note
%$$
%\Arg(\G_{n,1}^+(1))
%= \Arg(\G(w_{n,v} - \varepsilon_0 \; n^{-\d} e^{ i \t_{n,v} }, t_0))
%= \arccos \left( \cos(\Arg(\G(w_{n,v} - \varepsilon_0 \; n^{-\d} e^{ i \t_{n,v} })) \frac{2t_0\log(t_0)}{t_0^2-1} \right),
%$$
%for all $v \in V$ and $n$ sufficiently large. The result follows since
%$\lim_{s \to 0} \frac{2s\log(s)}{s^2-1} = 0$.
%
It thus follows that we can choose $N$ so that
\begin{eqnarray*}
\left| K_{n,u,v}^{\{1,2,3,4,5\},1} \right|
& \le & C^n \; (t_0)^{\frac{n}2 ( 1 - \a - \mu[\{c\}] )} \;
\prod_{i ; |x_i - c| < \e}
\left( 1 + 2 \left| \frac{\frac{v - u}n - (c - \frac{v}n - x_i)}
{y_{n,u,v} + c - \frac{v}n - x_i} \right| \right) \\
& \le & C^n \; (t_0)^{\frac{n}2 ( 1 - \a - \mu[\{c\}] )} \;
\prod_{i ; |x_i - c| < \e}
\left( 1 + 2 \left( 1 + \left| \frac{y_{n,u,v} + \frac{v - u}n}
{y_{n,u,v} + c - \frac{v}n - x_i} \right| \right) \right),
\end{eqnarray*}
for all $t_0$ sufficiently small, $n \ge N(t_0)$, $u \in U$ and $v \in V$.
%
%The first line follows if we can show that
%$$
%\sup_{u \in U, v \in V} \sup_{(i,z) \in \{1,\ldots,n\} \times (\G_n^1)^\ast}
%\left| \frac{y_{n,u,v} + c - \frac{v}n - x_i}{z + c - \frac{v}n - x_i} \right| \le 2,
%$$
%for $t_0$ sufficiently small and $n \ge N(t_0)$.
%To see this suppose, for example, we have the first picture.
%\begin{figure*}[ht]
%\centerline{\mbox{\includegraphics[width=4in]{angle.pdf}}}
%\caption{Simple Picture.}
%\end{figure*}
%Note
%$$
%\sin(\t_{n,u,v})
%= \frac{\Im(\G_{n,1}^+(1))}{|\G_{n,1}^+(1) - y_{n,u,v}|}
%\ge \frac{t_0 |z_{n,v,1}| \sin(\phi_{n,v,1}(t_0))}{t_0 |z_{n,v,1}| + |y_{n,u,v}|}
%\ge \frac23 \sin(\phi_{n,v,1}(t_0)),
%$$
%for all $t_0$ small, $n \ge N(t_0)$, $u \in U$ and $v \in V$.
%Also recall, for any fixed $\xi > 0$, we can choose $N$ so that
%$\sup_{v \in V} |\phi_{n,v,1}(t_0) - \frac{\pi}2| \le \xi$ for all $t_0$ sufficiently small
%and $n \ge N(t_0)$. Therefore $\sin(\t_{n,u,v}) \ge \frac12$ for all $t_0$ small, $n \ge N(t_0)$
%(for some choice of $N$), $u \in U$ and $v \in V$.
%
%Note that, for all $x > y_{n,u,v}$, it follows easily from the first picture that
%$\inf_{z \in (\G_n^1)^\ast} |z-x| = x - y_{n,u,v}$.
%Also, for all $x < y_{n,u,v}$ we have the second picture:
%\begin{figure*}[ht]
%\centerline{\mbox{\includegraphics[width=4in]{angle2.pdf}}}
%\caption{Picture.}
%\end{figure*}
%Then
%$$
%\inf_{u \in U, v \in V} \inf_{z \in (\G_n^1)^\ast} |z-x|
%\ge d
%= \inf_{u \in U, v \in V} \sin(\t_{n,u,v}) (y_{n,u,v} - x)
%\ge \inf_{u \in U, v \in V} \frac12 (y_{n,u,v} - x),
%$$
%for $t_0$ small and $n \ge N(t_0)$.
%
Finally, choosing $y_{n,u,v}$ sufficiently close to $\frac{u-v}n$
(recall that $y_{n,u,v} \in (x_j - \frac{v}n - c, x_{j-1} - \frac{v}n - c)$ for that
value of $j$ which satisfies $c + \frac{u}n \in [x_j,x_{j-1})$),
\begin{equation}
\label{eqKn123451}
\left| K_{n,u,v}^{\{1,2,3,4,5\},1} \right|
\le (5 C)^n (t_0)^{\frac{n}2 ( 1 - \a - \mu[\{c\}] )},
\end{equation}
for all $t_0$ sufficiently small, $n \ge N(t_0)$, $u \in U$ and $v \in V$.
The required result follows from equations (\ref{eqKn123455}),
(\ref{eqKn1234524}), (\ref{eqKn12453}) and (\ref{eqKn123451})
by fixing $t_0$ sufficiently small and $T_0$ sufficiently large.
\end{proof}

\begin{lem}
\label{lemfn1fn2}
For $n$ all sufficiently large and $v \in V$,
$$
\sup_{w \in (\g_n^3)^\ast} \Re ( h_{n,v}(w) - h_{n,v}(w_{n,v})) = 0.
$$
Moreover, letting $\d \in (\frac13,\frac12)$ be that used in equations (\ref{eqgamdef}) and
(\ref{eqGamndef}), there exists a constant $c>0$ for which
$$
\lim_{n \to \infty} \sup_{v \in V} \sup_{w \in (\g_n^{2,4})^\ast}
n^{2\d} \Re ( h_{n,v}(w) - h_{n,v}(w_{n,v})) \le - c.
$$
\end{lem}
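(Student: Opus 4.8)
The plan is to reduce both assertions to the second-order behaviour of $h_{n,v}$ near the saddles $w_{n,v}^\pm$, together with a unimodality property of $\Re h_{n,v}$ on circles centred at the origin. For the segment $\g_n^3$, equations (\ref{eqTay}) and (\ref{eqRnbs}) — which encode precisely the fact that, by the choice $\t_{n,v}=-\tfrac12\Arg h_{n,v}''(w_{n,v}^+)$, the segment $\g_{n,3}^\pm$ runs in the steepest-descent direction of $h_{n,v}$ at $w_{n,v}^\pm$ — give $h_{n,v}(\g_{n,3}^b(s))-h_{n,v}(w_{n,v}^b)=-\tfrac12 n^{-2\d}|h_{n,v}''(w_{n,v}^+)|\,s^2+R_{n,v}^b(s)$ with $|R_{n,v}^b(s)|\le Cn^{-3\d}$. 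Since $\d<\tfrac12$ this is $\le0$ on $\g_{n,3}^\pm$ for $n$ large, with equality only at $s=0$, and in particular one gets the endpoint bound $\Re\big(h_{n,v}(\g_{n,3}^+(\pm1))-h_{n,v}(w_{n,v}^+)\big)\le-\tfrac13|h_{n,v}''(w_{n,v}^+)|\,n^{-2\d}$, which is reused below. As $\mu_{n,v}$ is real, $h_{n,v}(\bar w)=\overline{h_{n,v}(w)}$, so $\Re h_{n,v}(w_{n,v}^-)=\Re h_{n,v}(w_{n,v}^+)$ and the supremum over all of $\g_n^3$ equals $0$.

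For $\g_n^{2,4}$ I would first record two ingredients. The elementary identities
$$
\frac{\partial}{\partial s}\Re h_{n,v}(re^{is})=-r\sin s\int\frac{c-x}{|re^{is}+c-x|^2}\,\mu_{n,v}[dx]
$$
and
$$
\frac{\partial}{\partial s}\int\frac{c-x}{|re^{is}+c-x|^2}\,\mu_{n,v}[dx]=2r\sin s\int\frac{(c-x)^2}{|re^{is}+c-x|^4}\,\mu_{n,v}[dx]>0
$$
show that for every $r>0$ the inner integral is strictly increasing on $(0,\pi)$, so $s\mapsto\Re h_{n,v}(re^{is})$ is strictly increasing then strictly decreasing on $(0,\pi)$, with a single peak at the angle where the inner integral vanishes (the same for $h$ with $\mu$; for the circle $|w|=|w_{n,v}^+|$ that angle is $\Arg w_{n,v}^+$, by (\ref{eqremmu001})). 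Second, $(w_0^+)^2h''(w_0^+)=(1-\a)-\int\frac{(w_0^+)^2}{(w_0^++c-x)^2}\,\mu[dx]$, and the integral has modulus at most $|w_0^+|^2\int|w_0^++c-x|^{-2}\mu[dx]=1-\a$ (Remark \ref{remmu00}), the triangle inequality being strict because $x\mapsto\arg(w_0^++c-x)$ is strictly monotone, hence non-constant on $\supp(\mu)$ ($\mu$ not being a point mass, by Hypothesis \ref{hypWeakConv}); so $\Re\big((w_0^+)^2h''(w_0^+)\big)>0$, which by (\ref{eqhmuhnbdd}) forces $\cos\big(2\t_{n,v}-2\Arg w_{n,v}^+\big)>0$ for $n$ large, uniformly in $v$.

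Then I would locate the circle-peaks relative to $\g_{n,3}^\pm$. The arc $\g_{n,2}^+$ lies on the circle of radius $r_-=|\g_{n,3}^+(-1)|=|w_{n,v}^+|+O(n^{-\d})$ and $\g_{n,4}^+$ on the circle of radius $r_+=|\g_{n,3}^+(1)|=|w_{n,v}^+|+O(n^{-\d})$. Decomposing $\g_{n,3}^+(\pm1)-w_{n,v}^+$ into radial and tangential parts at $w_{n,v}^+$ and comparing with the quadratic model of $\Re h_{n,v}$ restricted to these circles (the second ingredient makes $w_{n,v}^+$ a strict maximum of $\Re h_{n,v}$ along its own circle and a point of radial increase), one checks that the peak-angle of the $r_-$-circle exceeds $\Arg\g_{n,3}^+(-1)$ and that of the $r_+$-circle lies below $\Arg\g_{n,3}^+(1)$; using $\cos(2\t_{n,v}-2\Arg w_{n,v}^+)>0$ this reduces to $\varepsilon_0\cos(\t_{n,v}-\Arg w_{n,v}^+)>0$, which holds for $n$ large by the choice $\varepsilon_0=(-1)^{1_{C_0\ge0}}$ — this being the role of (\ref{eqC0}). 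By the first ingredient $\Re h_{n,v}$ is therefore increasing along all of $\g_{n,2}^+$ and decreasing along all of $\g_{n,4}^+$, so their suprema are $\Re h_{n,v}(\g_{n,3}^+(-1))$ and $\Re h_{n,v}(\g_{n,3}^+(1))$ respectively; $\g_{n,2}^-,\g_{n,4}^-$ follow by conjugation symmetry. Combining with the endpoint bound of the first step and $\Re h_{n,v}(w_{n,v}^-)=\Re h_{n,v}(w_{n,v}^+)$ gives $\sup_{(\g_n^{2,4})^\ast}\Re\big(h_{n,v}(w)-h_{n,v}(w_{n,v})\big)\le-\tfrac13|h_{n,v}''(w_{n,v}^+)|\,n^{-2\d}$, so the statement holds with $c=\tfrac16|h''(w_0^+)|$, using $h_{n,v}''(w_{n,v}^+)\to h''(w_0^+)\ne0$ (Lemma \ref{lemcp} and (\ref{eqhmuhnbdd})).

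The main obstacle is this last step. Because $w_{n,v}^+$ is a saddle of the harmonic function $\Re h_{n,v}$ that \emph{increases} in the radial direction, the peak of a circle of radius $|w_{n,v}^+|+O(n^{-\d})$ sits at height $\Re h_{n,v}(w_{n,v}^+)+O(n^{-2\d})$ with a correction of a priori indeterminate sign; one must show that along $\g_{n,2}^\pm,\g_{n,4}^\pm$ — i.e. on the side of $\g_{n,3}^\pm$ towards the real axis — this correction is negative, which is exactly what the sign $\varepsilon_0$, equivalently (\ref{eqC0}), is engineered to guarantee. Making this rigorous requires carrying the Taylor expansion of $\Re h_{n,v}$ near the saddle to the relevant order on each of the two slightly different circles and tracking how the peak-angle moves as the radius varies by $O(n^{-\d})$; everything else is routine.
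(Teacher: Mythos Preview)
Your argument for $\g_n^3$ is correct and matches the paper. For $\g_n^{2,4}$ your approach coincides with the paper's when $C_0\neq0$: your first ingredient is the unimodality of $s\mapsto\Re h_{n,v}(re^{is})$ that the paper establishes for $f$ and $f_{n,v}^\pm$, and your reduction of the peak--location condition to $\varepsilon_0\cos(\t_{n,v}-\Arg w_{n,v}^+)>0$ is correct --- it is equivalent to the paper's computation $n^\d(f_{n,v}^\pm)'(\Arg z_{n,v}^\pm)\to\pm C_0$, since to leading order $C_0=-|h''(w_0^+)|\,|w_0^+|\cos(\t_0-\Arg w_0^+)$, so $\mathrm{sgn}\,C_0=-\mathrm{sgn}\cos(\t_0-\Arg w_0^+)$ and indeed $\varepsilon_0\cos(\t_0-\Arg w_0^+)>0$. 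Your second ingredient, $\Re\big((w_0^+)^2h''(w_0^+)\big)>0$, is exactly what makes the implicit differentiation of the peak angle with respect to the radius well-posed and your reduction go through.

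The genuine gap is the case $C_0=0$. Then $\cos(\t_0-\Arg w_0^+)=0$, and your condition $\varepsilon_0\cos(\t_{n,v}-\Arg w_{n,v}^+)>0$ need not hold for large $n$: the leading term vanishes and the sign is governed by uncontrolled lower-order corrections, so the peak of $\Re h_{n,v}$ on the circle through $\g_{n,3}^+(-1)$ may lie at a \emph{smaller} angle, i.e.\ on $\g_{n,2}^+$, and the supremum over $\g_{n,2}^+$ is not attained at the endpoint. This is not merely Taylor-expansion bookkeeping, as your last paragraph suggests; the paper abandons the peak--location argument entirely here. Using the uniform bound $(f_{n,v}^\pm)''<-\e$ near $\Arg w_0$, it bounds the distance from the endpoint to the peak by $\e^{-1}|(f_{n,v}^\pm)'(\Arg z_{n,v}^\pm)|$, hence the overshoot in height by $\e^{-1}|(f_{n,v}^\pm)'(\Arg z_{n,v}^\pm)|^2=o(n^{-2\d})$ (since $n^\d(f_{n,v}^\pm)'(\Arg z_{n,v}^\pm)\to\pm C_0=0$). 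This $o(n^{-2\d})$ is then absorbed by the endpoint bound $-\tfrac12|h''(w_0^+)|n^{-2\d}$ from the $\g_n^3$ step. Your proposal needs this separate argument.
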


\begin{proof}
Consider $\g_n^3$. For all $n$ sufficiently large and $v \in V$ define
$f_{n,v} : [-1,1] \to \R$ by $f_{n,v}(s) := \Re ( h_{n,v}(\g_{n,3}^+(s)) )$ for all $s \in [-1,1]$.
Equation (\ref{eqgamdef}) gives $f_{n,v}'(0) = 0$ for all $n$ sufficiently large and $v \in V$.
Also Lemma \ref{lemcp} and equation (\ref{eqlemcp1}) give $h''(w_0) \neq 0$ and
$$
\sup_{s \in [-1,1]} \sup_{v \in V} f_{n,v}''(s)
\le - n^{-2\d} \sup_{v \in V} \sup_{w \in B(w_{n,v},n^{-\d})}
\Re \left( h_{n,v}''(w) e^{- i \Arg(h_{n,v}''(w_{n,v}))} \right)
\le - \frac12 n^{-2\d} | h''(w_0) |,
$$
for all $n$ sufficiently large, as required.
%
%To see this note that equation (\ref{eqwn1wn2}) gives
%$$
%w_{n,v,1} = w_{n,v} - i s_0 \; n^{-\d} e^{ i \t_{n,v} },
%\hspace{1cm}
%w_{n,v,2} = w_{n,v} + i s_0 \; n^{-\d} e^{ i \t_{n,v} },
%$$
%where $\t_{n,v} = - \frac12 \Arg(h_{n,v}''(w_{n,v}^+))$ and $s_0 := (-1)^{1_{C_0 \ge 0}}$.
%Therefore
%\begin{eqnarray*}
%f_{n,v}(s)
%& = & \Re \left( h_{n,v} \left( w_{n,v} + \frac12 ( w_{n,v,2} - w_{n,v,1} ) s \right) \right)
%\; = \; \Re \left( h_{n,v} \left( w_{n,v} + i s_0 \; n^{-\d} e^{ i \t_{n,v} } s \right) \right), \\
%f_{n,v}'(s)
%& = & \Re \left( h_{n,v}' \left(w_{n,v} + i s_0 \; n^{-\d} e^{ i \t_{n,v} } s \right)
%i s_0 \; n^{-\d} e^{ i \t_{n,v} } \right), \\
%f_{n,v}''(s)
%& = & - \Re \left( h_{n,v}'' \left( w_{n,v} + i s_0 \; n^{-\d} e^{ i \t_{n,v} } s \right)
%n^{-2\d} e^{ 2 i \t_{n,v} } \right),
%\end{eqnarray*}
%for all $s \in [-1,1]$ and $n$ sufficiently large. The result follows since
%$| w_{n,2} - w_{n,1} | = 2 n^{-\d}$.
%

Now consider $\g_n^{2,4}$. Define $z_{n,v}^\pm := w_{n,v} \pm i n^{-\d} e^{i \t_{n,v}}$
for all $n$ sufficiently large and $v \in V$, where $\t_{n,v} = -\frac12 \Arg(h_{n,v}''(w_{n,v}))$.
Also define $f, f_{n,v}^\pm : (0,\pi) \to \R$ by
\begin{equation}
\label{eqf0fnpm}
f(s) := \Re \left( h \left( |w_0| e^{is} \right) \right)
\hspace{1cm} \mbox{and} \hspace{1cm}
f_{n,v}^\pm(s) := \Re \left( h_{n,v} \left( |z_{n,v}^\pm| e^{is} \right) \right),
\end{equation}
for all $n$ sufficiently large, $v \in V$ and $s \in (0,\pi)$.
Equations (\ref{eqhn}) and (\ref{eqh0}) give
\begin{eqnarray*}
f(s) & = & \frac12 \int \log \left| |w_0| e^{is} + c - x \right|^2 \mu[dx] - (1 - \a) \log |w_0|, \\
f_{n,v}^\pm(s) & = & \frac12 \int \log \left| |z_{n,v}^\pm| e^{is} + c - x \right|^2 \mu_{n,v}[dx]
- \frac{n - \an}n \log |z_{n,v}^\pm|,
\end{eqnarray*}
for all $n$ sufficiently large, $v \in V$ and $s \in (0,\pi)$. It is easy to see that $f''(s) < 0$
for any $s \in (0,\pi)$ with $f'(s) = 0$. Also equation (\ref{eqremmu001}) gives $f'(\Arg(w_0)) = 0$.
%
%To see this note $\left| |w_0| e^{is} + c - x \right|^2 = |w_0|^2 + 2(c - x) |w_0| \cos(s) + (c-x)^2$,
%and so
%\begin{eqnarray*}
%f'(s)
%& = & - \int \frac{ (c - x) |w_0| \sin(s)}{| |w_0| e^{is} + c - x |^2} \mu[dx], \\
%f''(s)
%& = & - \int \left( \frac{(c - x) |w_0| \cos(s)}{| |w_0| e^{is} + c - x |^2} 
%+ 2 \left( \frac{(c - x) |w_0| \sin (s)}{| |w_0| e^{is} + c - x |^2} \right)^2 \right) \mu[dx],
%\end{eqnarray*}
%for all $s \in (0,\pi)$. Thus, given any $s \in (0,\pi)$ with $f'(s) = 0$,
%$$
%\int \frac{c - x}{| |w_0| e^{i s} + c - x|^2} \mu[dx] = 0
%\hspace{0.5cm} \mbox{and} \hspace{0.5cm}
%f''(s)
%= - 2 \int \left( \frac{(c- x) |w_0| \sin (s)}{||w_0| e^{is} + c - x|^2} \right)^2 \mu[dx].
%$$
%Thus, since $\mu[\{c\}] < 1$, we must have $f''(s) < 0$.
%
Thus $f$ has a unique critical point in $(0,\pi)$, a global maximum at $\Arg(w_0)$.
Similarly, for all $n$ sufficiently large and $v \in V$, $f_{n,v}^\pm$ has at most
one critical point in $(0,\pi)$ which, if it exists, must be a global maximum.
%
%To see this note $\left| |z_n| e^{is} + c - x \right|^2 = |z_n|^2 + 2(c - x) |z_n| \cos(s) + (c-x)^2$,
%and so
%\begin{eqnarray}
%\label{eqtmp1}
%(f_n)'(s)
%& = & - \int \frac{ (c - x) |z_n| \sin(s)}{| |z_n| e^{is} + c - x |^2} \mu_n[dx], \\
%\label{eqtmp2}
%(f_n)''(s)
%& = & - \int \left( \frac{(c - x) |z_n| \cos(s)}{| |z_n| e^{is} + c - x |^2}
%+ 2 \left( \frac{(c - x) |z_n| \sin (s)}{| |z_n| e^{is} + c - x |^2} \right)^2 \right) \mu_n[dx],
%\end{eqnarray}
%for all $n$ sufficiently large and $s \in (0,\pi)$. Thus, given any $s \in (0,\pi)$ with $(f_n)'(s) = 0$,
%$$
%\int \frac{c - x}{| |z_n| e^{i s} + c - x|^2} \mu_n[dx] = 0
%\hspace{0.5cm} \mbox{and} \hspace{0.5cm}
%(f_n)''(s)
%= - 2 \int \left( \frac{(c- x) |z_n| \sin (s)}{||z_n| e^{is} + c - x|^2} \right)^2 \mu_n[dx].
%$$
%
To demonstrate it's existence, fix $\phi_0 \in (0,\frac{\pi}2)$. Since $\frac{q_n}n \to \a$
and $\mu_{n,0} \to \mu$ weakly as $n \to \infty$ (see equation (\ref{eqEmpProbMeas}) and
hypothesis \ref{hypWeakConv}), Lemma \ref{lemcp} implies that
$$
\lim_{n \to \infty} \sup_{v \in V} \sup_{s \in [\phi_0,\pi-\phi_0]} |f_{n,v}^\pm(s) - f(s)| = 0.
$$
Thus, for all $n$ sufficiently large and $v \in V$, since the unique critical point of $f$ in
$(0,\pi)$ is a global maximum at $\Arg(w_0)$, $f_{n,v}^\pm$ must have a unique critical point
in $(0,\pi)$. Denoting by $s_{n,v}^\pm$, it also follows that
$\sup_{v \in V} |s_{n,v}^\pm - \Arg(w_0)| \to 0$ as $n \to \infty$.
%
%To see this let $s_0 := \Arg(w_0)$. Recall that $f$ has a global maximum at $s_0$ and that
%$$
%\lim_{n \to \infty} \sup_{v \in V} \sup_{s \in [\phi_0,\pi-\phi_0]} |f_{n,v}(s) - f(s)| = 0.
%$$
%Thus for any fixed $\e>0$ for which $(s_0-\e,s_0+\e) \subset [\phi_0,\pi-\phi_0]$, there exists
%an $N \in \N$ for which
%\begin{equation}
%\label{eqtmp1}
%\sup_{s \in [\phi_0,\pi-\phi_0]} |f_{n,v}(s) - f(s)|
%< \frac12 (f(s_0) - f(s_0 \pm \e)).
%\end{equation}
%for all $v \in V$ and $n \ge N$. Therefore
%$$
%f_{n,v}(s_0 \pm \e)
%< \frac12 (f(s_0) + f(s_0 \pm \e))
%< f_{n,v}(s_0),
%$$
%for all $v \in V$ and $n \ge N$. Thus, for all $v \in V$ and $n \ge N$, $f_{n,v}$ has a maximum
%in the interval $(s_0-\e, s_0+\e)$. As seen before, this must be the global maximum of $f_{n,v}$
%in $(0,\pi)$. The convergence result follows since this can be done for any arbitrarily small
%$\e>0$.
%

Equation (\ref{eqremmu001}) gives
$$
(f_{n,v}^\pm)'(\Arg(z_{n,v}^\pm)) = \Im (z_{n,v}^\pm) \int \left( \frac{c - x}{|w_{n,v} + c - x|^2}
- \frac{c - x}{|z_{n,v}^\pm + c - x|^2} \right) \mu_{n,v}[dx],
$$
for all $n$ sufficiently large and $v \in V$.
%
%To see this first note equation (\ref{eqhn}) recall,
%\begin{eqnarray*}
%(f_n)'(s)
%& = & - \int \frac{ (c - x) |z_n| \sin(s)}{| |z_n| e^{is} + c - x |^2} \mu_n[dx], \\
%\Rightarrow (f_n)'(\Arg(z_n))
%& = & - \Im (z_n) \int \frac{c - x}{|z_n + c - x|^2} \mu_n[dx].
%\end{eqnarray*}
%
Then, since $\mu_{n,0} \to \mu$ weakly as $n \to \infty$ (see equation (\ref{eqEmpProbMeas})
and hypothesis \ref{hypWeakConv}),
\begin{equation}
\label{eqfnpmdArg}
\lim_{n \to \infty} \sup_{v \in V} \left| n^\d (f_{n,v}^\pm)'(\Arg(z_{n,v}^\pm)) \mp  C_0 \right| = 0,
\end{equation}
where
\begin{equation}
\label{eqC0}
C_0 := 2 \Im(w_0) \int \frac{(c - x) \Im( (w_0 + c - x) e^{\frac{i}2 \Arg(h''(w_0))})}
{|w_0 + c - x|^4} \mu[dx].
\end{equation}
Also equation (\ref{eqremmu001}) gives
$$
(f_{n,v}^\pm)''(s) = \int \left( \frac{(c - x) |z_{n,v}^\pm| \cos(s)}{|w_{n,v} + c - x|^2} -
\frac{(c - x) |z_{n,v}^\pm| \cos(s)}{| |z_{n,v}^\pm| e^{is} + c - x |^2}
- 2 \left( \frac{(c - x) |z_{n,v}^\pm| \sin (s)}{| |z_{n,v}^\pm| e^{is} + c - x |^2} \right)^2 \right) \mu_{n,v}[dx],
$$
for all $n$ sufficiently large, $v \in V$ and $s \in (0,\pi)$. Thus there exists a constant $C>0$
for which
$$
\left| (f_{n,v}^\pm)''(s) + 2 \Im(w_0)^2 \int \frac{(c - x)^2}{|w_0 + c - x|^4} \mu_{n,v}[dx] \right|
\le C (|s - \Arg(w_0)| + |w_{n,v} - w_0| + n^{-\d}),
$$
for all $n$ sufficiently large, $v \in V$ and $s$ sufficiently close to $\Arg(w_0)$.
%
%\begin{eqnarray*}
%\lefteqn{\left| (f_{n,v})''(s)
%+ 2 \Im(w_0)^2 \int \frac{(c - x)^2}{|w_0 + c - x|^4} \mu_{n,v}[dx] \right|} \\
%& \le & \sup_{x \in \R} \left| \frac{(c - x) |z_{n,v}| \cos(s)}{|w_{n,v} + c - x|^2} -
%\frac{(c - x) |z_{n,v}| \cos(s)}{| |z_{n,v}| e^{is} + c - x |^2} \right| \\
%& + & \sup_{x \in \R} \left| - 2 \frac{(c - x)^2 |z_{n,v}|^2 \sin (s)^2}
%{| |z_{n,v}| e^{is} + c - x |^4} + 2 \Im(w_0)^2 \frac{(c - x)^2}{|w_0 + c - x|^4} \right| \\
%& \le & \sup_{x \in \R} \left| \frac{(c - x) |z_{n,v}| \cos(s) | |z_{n,v}| e^{is} + c - x |^2
%- (c - x) |z_{n,v}| \cos(s) | w_{n,v} + c - x |^2}
%{|w_{n,v} + c - x|^2 \; | |z_{n,v}| e^{is} + c - x |^2} \right| \\
%& + & \sup_{x \in \R} 2 \left| \frac{- (c - x)^2 |z_{n,v}|^2 \sin (s)^2 |w_0 + c - x|^4
%+ (c - x)^2 \Im(w_0)^2 | |z_{n,v}| e^{is} + c - x |^4}
%{| |z_{n,v}| e^{is} + c - x |^4 |w_0 + c - x|^4} \right|.
%\end{eqnarray*}
%
Thus, since $\mu_{n,0} \to \mu$ weakly as $n \to \infty$ (see equation (\ref{eqEmpProbMeas})
and hypothesis \ref{hypWeakConv}), Lemma \ref{lemcp} implies that there exists an $\e>0$ for
which $(f_{n,v}^\pm)''(s) < - \e$ for all $n$ sufficiently large, $v \in V$ and $s$
sufficiently close to $\Arg(w_0)$. 

Consider the case $C_0>0$. Then $\varepsilon_0 = -1$ (recall
$\varepsilon_0 = (-1)^{1_{C_0 \ge 0}}$) and, for all $n$ sufficiently large and $v \in V$,
equations (\ref{eqgamdef}) and (\ref{eqf0fnpm}) give
\begin{eqnarray*}
\sup_{w \in (\g_n^2)^\ast} \Re ( h_{n,v}(w) - h_{n,v}(\g_{n,3}^+(-1)) )
& = & \sup_{s \in [\frac1n,\Arg(z_{n,v}^+)]} ( f_{n,v}^+(s) - f_{n,v}^+(\Arg(z_{n,v}^+)) ), \\
\sup_{w \in (\g_n^4)^\ast}
\Re ( h_{n,v}(w) - h_{n,v}(\g_{n,3}^+(1)) )
& = & \sup_{s \in [\Arg(z_{n,v}^-),\pi-\frac1n]} ( f_{n,v}^-(s) - f_{n,v}^-(\Arg(z_{n,v}^-)) ).
\end{eqnarray*}
Also equation (\ref{eqfnpmdArg}) gives $(f_{n,v}^-)'(\Arg(z_{n,v}^-)) < 0 < (f_{n,v}^+)'(\Arg(z_{n,v}^+))$
for all $n$ sufficiently large and $v \in V$. Recall that the unique critical point of $f_{n,v}^\pm$ in
$(0,\pi)$ is a global maximum at $s_{n,v}^\pm$. Thus for all $n$ sufficiently large and $v \in V$,
$\Arg(z_{n,v}^+) < s_{n,v}^+$, $\Arg(z_{n,v}^-) > s_{n,v}^-$, and
$$
\sup_{w \in (\g_n^2)^\ast} \Re ( h_{n,v}(w) - h_{n,v}(\g_{n,3}^+(-1)) )
= 0 =
\sup_{w \in (\g_n^4)^\ast} \Re ( h_{n,v}(w) - h_{n,v}(\g_{n,3}^+(1)) ).
$$
It thus follows that
$$
\sup_{w \in (\g_n^{2,4})^\ast} \Re ( h_{n,v}(w) - h_{n,v}(w_{n,v}) )
= \Re ( h_{n,v}(\g_{n,3}^+(\pm 1)) - h_{n,v}(w_{n,v}) ),
$$
for all $n$ sufficiently large and $v \in V$. Thus, since $\d \in (\frac13,\frac12)$
and $h''(w_0) \neq 0$ (see Lemma \ref{lemcp}), equations (\ref{eqhmuhnbdd}), (\ref{eqTay})
and (\ref{eqRnbs}) give the required result.
%
%To see this recall equation (\ref{eqTay}) gives
%\begin{eqnarray*}
%h_{n,v}(\g_{n,3}^+(\pm 1)) - h_{n,v}(w_{n,v})
%& = & - \frac12 n^{-2\d} \left| h_{n,v}''(w_{n,v}) \right| (\pm 1)^2 + R_{n,v}^+(\pm 1) \\
%& = & - \frac12 n^{-2\d} \left| h_{n,v}''(w_{n,v}) \right| + R_{n,v}^+(\pm 1).
%\end{eqnarray*}
%Also recall that $|R_n^+(\pm 1)| \le C n^{-3\d}$ some $C>0$.
%Therefore
%$$
%n^{2\d} \Re ( h_{n,v}(\g_{n,3}^+(\pm 1)) - h_{n,v}(w_{n,v}) )
%\le - \frac12 \left| h_{n,v}''(w_{n,v}) \right| + C n^{-\d}
%\le -\frac14 \left| h''(w_0) \right|,
%$$
%for all $n$ sufficiently large and $v \in V$.
%
Similarly for $C_0 < 0$.

Now suppose $C_0=0$. Recall that there exists an
$\e>0$ for which $(f_{n,v}^\pm)''(s) < - \e$ for all $n$ sufficiently large, $v \in V$ and $s$
sufficiently close to $\Arg(w_0)$. Thus, since $\sup_{v \in V} |s_{n,v}^\pm - \Arg(w_0)| \to 0$
and $\sup_{v \in V} |z_{n,v}^\pm - w_0| \to 0$ as $n \to \infty$,
$$
\e |s_{n,v}^\pm - \Arg(z_{n,v}^\pm)| \le |(f_{n,v}^\pm)'(\Arg(z_{n,v}^\pm))|,
$$
for all $n$ sufficiently large and $v \in V$.
%
%To see this let $A_n^\pm = \Arg(z_n^\pm)$. Thus whenever $A_n^\pm \le s_n^\pm$,
%$$
%|(f_n^\pm)'(A_n^\pm)|
%= (f_n^\pm)'(A_n^\pm) - (f_n^\pm)'(s_n^\pm)
%= \int_{A_n^\pm}^{s_n^\pm} dx \; (-(f_n^\pm)''(x))
%\ge \e (s_n^\pm - A_n^\pm).
%$$
%Also whenever $A_n^\pm \ge s_n^\pm$,
%$$
%|(f_n^\pm)'(A_n^\pm)|
%= (f_n^\pm)'(s_n^\pm) - (f_n^\pm)'(A_n^\pm)
%= \int_{s_n^\pm}^{A_n^\pm} dx \; (-(f_n^\pm)''(x))
%\ge \e (A_n^\pm - s_n^\pm).
%$$
%
Moreover
$$
f_{n,v}^\pm(s_{n,v}^\pm) - f_{n,v}^\pm(\Arg(z_{n,v}^\pm))
\le |s_{n,v}^\pm - \Arg(z_{n,v}^\pm)| \; \left| (f_{n,v}^\pm)'(\Arg(z_{n,v}^\pm)) \right|,
$$
for all $n$ sufficiently large and $v \in V$,
%
%To see this let $A_n^\pm = \Arg(z_n^\pm)$. Thus whenever $A_n^\pm \le s_n^\pm$,
%$$
%f_n^\pm(s_n^\pm) - f_n^\pm(A_n^\pm)
%= \int_{A_n^\pm}^{s_n^\pm} dx \; (f_n^\pm)'(x)
%\le (s_n^\pm - A_n^\pm) \sup_{x \in [A_n^\pm,s_n^\pm]} (f_n^\pm)'(x)
%= (s_n^\pm - A_n^\pm) (f_n^\pm)'(A_n^\pm),
%$$
%where the final part follows since $(f_{n,v}^\pm)''(s) < - \e$ for all $n$ sufficiently large, $v \in V$ and $s$
%sufficiently close to $\Arg(w_0)$. Also whenever $A_n^\pm \ge s_n^\pm$,
%$$
%f_n^\pm(s_n^\pm) - f_n^\pm(A_n^\pm)
%= \int_{s_n^\pm}^{A_n^\pm} dx \; (-(f_n^\pm)'(x))
%\le (A_n^\pm - s_n^\pm) \sup_{x \in [s_n^\pm,A_n^\pm]} (-(f_n^\pm)'(x))
%= (A_n^\pm - s_n^\pm) (-(f_n^\pm)'(A_n^\pm)).
%$$
%
and so
$$
f_{n,v}^\pm(s_{n,v}^\pm) - f_{n,v}^\pm(\Arg(z_{n,v}^\pm))
\le \e^{-1} \left| (f_{n,v}^\pm)'(\Arg(z_{n,v}^\pm)) \right|^2.
$$
Thus, since $C_0=0$, equation (\ref{eqfnpmdArg}) gives
$$
\lim_{n \to \infty} \sup_{v \in V} n^{2\d} \left( f_{n,v}^\pm(s_{n,v}^\pm)
- f_{n,v}^\pm(\Arg(z_{n,v}^\pm)) \right) = 0.
$$
Then, recalling that $s_{n,v}^\pm$ is the global maximum of $f_{n,v}^\pm$, equation
(\ref{eqf0fnpm}) gives
$$
\lim_{n \to \infty} \sup_{v \in V} \sup_{s \in (0,\pi)} n^{2\d}
\Re \left( h_{n,v} \left( |z_{n,v}^\pm| e^{is} \right) - h_{n,v}(z_{n,v}^\pm) \right) = 0.
$$
%
%To see this note that the above gives
%$$
%\lim_{n \to \infty} \sup_{v \in V} \sup_{s \in (0,\pi)} n^{2\d}
%\left( f_{n,v}^\pm(s) - f_{n,v}^\pm(\Arg(z_{n,v}^\pm)) \right) = 0.
%$$
%Recall that $f_{n,v}^\pm(s) := \Re \left( h_{n,v} \left( |z_{n,v}^\pm| e^{is} \right) \right)$.
%
The required result follows from equations (\ref{eqgamdef}) and (\ref{eqTay}) in a similar way to before. 
%
%To see this note that the above gives
%$$
%\lim_{n \to \infty} \sup_{v \in V} \sup_{s \in (0,\pi)} n^{2\d}
%\Re \left( h_{n,v} \left( |w_{n,v} \pm i n^{-\d} e^{i \t_{n,v}}| e^{is} \right)
%- h_{n,v}(w_{n,v} \pm i n^{-\d} e^{i \t_{n,v}}) \right) = 0.
%$$
%Therefore
%\begin{eqnarray*}
%0
%& = & \lim_{n \to \infty} \sup_{v \in V} \sup_{w \in (\g_n^2)^\ast}
%n^{2\d} \Re ( h_{n,v}(w) - h_{n,v}(\g_{n,3}^+(-1)) ) \\
%& = & \lim_{n \to \infty} \sup_{v \in V} \sup_{w \in (\g_n^4)^\ast}
%n^{2\d} \Re ( h_{n,v}(w) - h_{n,v}(\g_{n,3}^+(1)) ).
%\end{eqnarray*}
%
\end{proof}

\begin{lem}
\label{lempn2}
For all $n$ sufficiently large and $v \in V$,
$$
\sup_{z \in (\G_n^3)^\ast} \Re ( h_{n,v}(w_{n,v}) - h_{n,v}(z) ) = 0.
$$
Moreover, letting $\d \in (\frac13,\frac12)$ be that used in equations (\ref{eqgamdef}) and
(\ref{eqGamndef}), there exists a constant $c>0$ for which
$$
\lim_{n \to \infty} \sup_{v \in V} \sup_{z \in (\G_n^{2,4})^\ast}
n^{2\d} \Re ( h_{n,v}(w_{n,v}) - h_{n,v}(z)) \le - c.
$$
\end{lem}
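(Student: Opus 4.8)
The plan is to follow the proof of Lemma~\ref{lemfn1fn2}, with the roles of maximum and minimum interchanged: along $\G_n$ we want $z \mapsto \Re h_{n,v}(z)$ to attain a local minimum at $w_{n,v}$, quadratically sharp on the straight piece $\G_n^3$ and with a gap of order $n^{-2\d}$ on the curved pieces $\G_n^{2,4}$. The inputs are the uniform estimates $h_{n,v}^{(j)} \to h^{(j)}$ on $\bar B(w_0,\e)$ from~(\ref{eqlemcp1}), the nondegeneracy $h''(w_0)\neq 0$ and convergences $w_{n,v}\to w_0$, $\t_{n,v}\to\t_0:=-\tfrac12\Arg h''(w_0)$ (Lemma~\ref{lemcp} and~(\ref{eqhmuhnbdd})), the saddle relations $h_{n,v}'(w_{n,v}^\pm)=0$, the moment identities~(\ref{eqremmu001})--(\ref{eqremmu002}), and Schwarz reflection $\Re h_{n,v}(\bar z)=\Re h_{n,v}(z)$ to reduce the $-$ pieces to the $+$ pieces.

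For $\G_n^3$: set $p_{n,v}(s):=\Re h_{n,v}(\G_{n,3}^+(s))$ on $[-1,1]$. Since $\G_{n,3}^+(s)=w_{n,v}^+ + \varepsilon_0\, n^{-\d}e^{i\t_{n,v}}s$ by~(\ref{eqGamndef}), we get $p_{n,v}'(0)=\Re(\varepsilon_0 n^{-\d}e^{i\t_{n,v}}h_{n,v}'(w_{n,v}^+))=0$ and $p_{n,v}''(s)=n^{-2\d}\Re(e^{2i\t_{n,v}}h_{n,v}''(\G_{n,3}^+(s)))$. As $\t_{n,v}=-\tfrac12\Arg h_{n,v}''(w_{n,v}^+)$, so that $e^{2i\t_{n,v}}h_{n,v}''(w_{n,v}^+)=|h_{n,v}''(w_{n,v}^+)|$, and $\G_{n,3}^+(s)\in\bar B(w_0,\e)$ for $n$ large, ~(\ref{eqlemcp1}) forces $p_{n,v}''\ge\tfrac12 n^{-2\d}|h''(w_0)|>0$ uniformly in $s\in[-1,1]$ and $v\in V$. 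Hence $p_{n,v}$ is strictly convex with a critical point at $0$, giving $\sup_{z\in(\G_n^3)^\ast}\Re(h_{n,v}(w_{n,v})-h_{n,v}(z))=0$ and, by~(\ref{eqTay})--(\ref{eqRnbs}) (or directly by convexity), the endpoint estimate
\begin{equation}
\label{eqlempn2end}
\Re h_{n,v}\big(\G_{n,3}^\pm(\pm1)\big)\ \ge\ \Re h_{n,v}(w_{n,v}) + \tfrac14 n^{-2\d}|h''(w_0)|,
\end{equation}
which supplies the gap on $\G_n^{2,4}$. This is routine and dual to the $\g_n^3$ step of Lemma~\ref{lemfn1fn2}: the segment direction $e^{i\t_{n,v}}$, rather than $ie^{i\t_{n,v}}$, makes $\Re h_{n,v}$ convex rather than concave.

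For $\G_n^{2,4}$, write $\tilde z_{n,v}^\pm:=\G_{n,3}^+(\pm1)=w_{n,v}^+\pm\varepsilon_0\,n^{-\d}e^{i\t_{n,v}}$, so $\G_{n,4}^+(s)=\G(\tilde z_{n,v}^+,s)$ for $s\in(1,T_0)$ and $\G_{n,2}^+(s)=\G(\tilde z_{n,v}^-,s)$ for $s\in[t_0,1)$; both curves lie in the open upper half-plane by~(\ref{eqG}), so $h_{n,v}$ is holomorphic on them. I would then study $q(s):=\Re h(\G(w_0,s))$ and $q_{n,v}^\pm(s):=\Re h_{n,v}(\G(\tilde z_{n,v}^\pm,s))$ on $(0,\infty)$. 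From $\G(w_0,1)=w_0$ and $\tfrac{d}{ds}\G(w_0,s)|_{s=1}=w_0$ one gets $q'(1)=\Re(h'(w_0)w_0)=0$; the structural fact that makes the contour~(\ref{eqG}) work — and where the explicit factor $g(s):=\tfrac{2s\log s}{s^2-1}$ in~(\ref{eqG}) is essential — is that $q''(s)>0$ at \emph{every} zero of $q'$ (a direct computation with $\mu$, analogous to the ``easy to see'' claim for $f$ in Lemma~\ref{lemfn1fn2}, but now using the particular form of $g$). Since moreover $q(s)\to+\infty$ as $s\to0^+$ (the term $-(1-\a)\log|\G(w_0,s)|=-(1-\a)(\log s+\log|w_0|)$ dominates) and as $s\to\infty$ (where $q(s)\sim\a\log|\G(w_0,s)|$, $\a\in(0,1)$), this shows $s=1$ is the unique critical point of $q$ in $(0,\infty)$ and its global minimum. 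Running the same computation with the genuinely $n$-point, hence non-point-mass, measure $\mu_{n,v}$ in place of $\mu$ shows each $q_{n,v}^\pm$ has, for $n$ large, a single critical point $s_{n,v}^\pm$, which is its global minimum; a Bolzano--Weierstrass/contradiction argument as in Lemma~\ref{lemcp} (using $\mu_{n,v}\to\mu$ weakly and $\tfrac{\an}n\to\a$) then gives $\sup_{v\in V}|s_{n,v}^\pm-1|\to0$.

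The remaining point — and the main obstacle — is to locate $s_{n,v}^\pm$ relative to $1$. Using $h_{n,v}'(w_{n,v}^+)=0$ and a Taylor expansion,
$$
\frac{d}{ds}q_{n,v}^\pm(s)\,\big|_{s=1}=\Re\big(h_{n,v}'(\tilde z_{n,v}^\pm)\,\tilde z_{n,v}^\pm\big)=\pm\varepsilon_0\,n^{-\d}\,\Re\big(h_{n,v}''(w_{n,v}^+)\,e^{i\t_{n,v}}\,\tilde z_{n,v}^\pm\big)+O(n^{-2\d}),
$$
and since $h_{n,v}''(w_{n,v}^+)e^{i\t_{n,v}}=|h_{n,v}''(w_{n,v}^+)|e^{-i\t_{n,v}}\to|h''(w_0)|e^{-i\t_0}$ and $\tilde z_{n,v}^\pm\to w_0$, the leading term has sign $\pm\varepsilon_0\,\sgn(\Re(e^{-i\t_0}w_0))$. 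The key check is that $\Re(e^{-i\t_0}w_0)$ and $C_0$ of~(\ref{eqC0}) have \emph{opposite} signs; I would derive this from~(\ref{eqremmu001}) and the identity $h''(w_0)=w_0^{-1}\int\tfrac{c-x}{(w_0+c-x)^2}\mu[dx]$ (which follows from $h'(w_0)=0$). Granting it, the choice $\varepsilon_0=(-1)^{1_{C_0\ge0}}$ yields $\tfrac{d}{ds}q_{n,v}^+|_{s=1}\ge0\ge\tfrac{d}{ds}q_{n,v}^-|_{s=1}$ for $n$ large, hence $s_{n,v}^+\le1\le s_{n,v}^-$, so $q_{n,v}^+$ is nondecreasing on $[1,T_0)$ and $q_{n,v}^-$ nonincreasing on $[t_0,1]$. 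Therefore $\Re h_{n,v}\ge\Re h_{n,v}(\tilde z_{n,v}^+)$ on $(\G_{n,4}^+)^\ast$ and $\Re h_{n,v}\ge\Re h_{n,v}(\tilde z_{n,v}^-)$ on $(\G_{n,2}^+)^\ast$; combining with~(\ref{eqlempn2end}) and Schwarz reflection for $\G_n^-$ gives $\sup_{z\in(\G_n^{2,4})^\ast}\Re(h_{n,v}(w_{n,v})-h_{n,v}(z))\le-\tfrac14 n^{-2\d}|h''(w_0)|$ for $n$ large, which is the claim with $c=\tfrac14|h''(w_0)|$. The degenerate case $C_0=0$ (so $\Re(e^{-i\t_0}w_0)=0$, $\varepsilon_0=-1$, and $\tfrac{d}{ds}q_{n,v}^\pm|_{s=1}=o(n^{-\d})$) I would treat exactly as the corresponding case in Lemma~\ref{lemfn1fn2}: since $(q_{n,v}^\pm)''$ is bounded below near $s=1$, one gets $\sup_{v\in V}(q_{n,v}^\pm(s_{n,v}^\pm)-q_{n,v}^\pm(1))=o(n^{-2\d})$, which is absorbed into the $\tfrac14 n^{-2\d}|h''(w_0)|$ gap from~(\ref{eqlempn2end}).
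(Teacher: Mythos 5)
Most of your plan coincides with the paper's own argument: the convexity computation on $\G_n^3$, the reduction of $\G_{n,2}^+,\G_{n,4}^+$ to the profiles $s\mapsto \Re\, h_{n,v}(\G(\tilde z_{n,v}^\pm,s))$, the transfer from the limit profile to the $n$-dependent ones by uniform convergence plus a local second-derivative bound, the location of $s_{n,v}^\pm$ relative to $1$ via the sign of the derivative at $s=1$ and its link to $C_0$ and $\varepsilon_0$, and the separate treatment of $C_0=0$ are all exactly what the paper does (the paper works with $-\Re\, h$ so the extremum is a maximum, a cosmetic difference). Your ``key check'' is also correct: indeed one has the exact identity $C_0=-|h''(w_0)|\,\Re\bigl(w_0e^{\frac i2\Arg h''(w_0)}\bigr)$, and it follows, as you say, from $w_0h''(w_0)=\int\frac{c-x}{(w_0+c-x)^2}\mu[dx]$ together with (\ref{eqremmu001}), so the orientation choice $\varepsilon_0=(-1)^{1_{C_0\ge 0}}$ gives the inequalities $(q_{n,v}^+)'(1)\ge 0\ge (q_{n,v}^-)'(1)$ you need.

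The genuine gap is the central structural claim for the limit profile $q(s)=\Re\, h(\G(w_0,s))$: that $s=1$ is its unique critical point and a global minimum. You assert this via ``$q''(s)>0$ at every zero of $q'$, a direct computation analogous to the circle case,'' but this route does not go through as described: at a critical point of $q$ on the curved contour one only has $\Re\bigl(h'(\G(w_0,s))\G'(w_0,s)\bigr)=0$, not $h'(\G(w_0,s))=0$, so $q''(s)=\Re\bigl(h''(\G)\,(\G')^2\bigr)+\Re\bigl(h'(\G)\,\G''\bigr)$ contains a term with no obvious sign, and unlike the circle case there is no simple pointwise relation to eliminate it. What actually works — and is the heart of the paper's proof — is to compute $q'(s)$ globally: using (\ref{eqremmu001})--(\ref{eqremmu002}), $|\G(w_0,s)|=s|w_0|$, and the freedom to add the term $\int\frac{|w_0|^2+(c-x)g(s)}{s|w_0+c-x|^2}\mu[dx]$ for an arbitrary $g$, the choice $g(s)=s\frac{d}{ds}\Re\,\G(w_0,s)$ factorizes the integrand as an $x$-independent factor $\bigl((1-s^2)\Im(w_0)^2+\frac1{1-s^2}(1+s^2+\frac{4s^2}{1-s^2}\log s)^2\Re(w_0)^2\bigr)$ times a positive density; this factor is positive on $(0,1)$, zero at $s=1$ and negative on $(1,\infty)$, which gives strict monotonicity on either side of $1$ directly (and makes your boundary-behaviour claims as $s\to0^+$, $s\to\infty$ unnecessary; note the $s\to0^+$ claim would anyway require invoking $\mu[\{c\}]<1-\a$). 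Without this computation — which is precisely where the specific form (\ref{eqG}) of the contour is used — the rest of your argument has nothing to stand on, so you need to either carry it out or replace your second-derivative criterion by the explicit first-derivative sign analysis. The same computation, run with $\mu_{n,v}$ and base points $\tilde z_{n,v}^\pm$, is also what justifies your claim that $q_{n,v}^\pm$ has a single critical point; note the paper only establishes this on the compact window $[t_0,T_0]$, which is all that is needed.
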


\begin{proof}
The result for $\G_n^3$ follows in a similar way to that for $\g_n^3$ given in Lemma
\ref{lemfn1fn2}. Consider $\G_n^{2,4}$. For all $n$ sufficiently large and $v \in V$,
define $z_{n,v}^\pm := w_{n,v} \pm n^{-\d} e^{i \t_{n,v}}$, and
$p, p_{n,v}^\pm :(0,\infty) \to \R$ by
$$
p(s) := -\Re(h(\G(w_0,s))
\hspace{1cm} \mbox{and} \hspace{1cm}
p_{n,v}^\pm(s) := -\Re(h_{n,v}(\G_{n,v}^\pm(z_{n,v}^\pm,s)),
$$
for all $s>0$, where $\G$ is defined in equation (\ref{eqG}).
Equations (\ref{eqhn}) and (\ref{eqh0}) give
\begin{eqnarray*}
p(s)
& = & - \frac12 \int \log |\G(w_0,s) + c - x|^2 \mu[dx] + (1 - \a) \log | \G(w_0,s) |, \\
p_{n,v}^\pm(s)
& = & - \frac12 \int \log |\G(z_{n,v}^\pm,s) + c - x|^2 \mu_{n,v}[dx]
+ \frac{n - \an}n \log | \G(z_{n,v}^\pm,s) |,
\end{eqnarray*}
for all $n$ sufficiently large, $v \in V$ and $s>0$.

Consider $p$. Recalling that $|\G(w_0,s)| = s |w_0|$ for all $s>0$ (see equation (\ref{eqG})),
equations (\ref{eqremmu001}) and (\ref{eqremmu002}) give
$$
p'(s)
= \int \left( - \frac{s |w_0|^2 + (c - x) \frac{d}{ds} \Re(\G(w_0,s))}{|\G(w_0,s) + c - x|^2}
+ \frac{|w_0|^2 + (c - x) g(s)}{s |w_0 + c - x|^2} \right) \mu[dx],
$$
for all $s > 0$, where $g : (0,\infty) \to \R$ is arbitrary.
%
%To see this note, letting $R(s) := \Re(\G(w_0,s))$,
%\begin{eqnarray*}
%|\G(w_0,s) + c - x|^2
%& = & |\G(w_0,s)|^2 + 2(c - x) R(s) + (c - x)^2 \\
%& = & s^2 |w_0|^2 + 2(c - x) R(s) + (c - x)^2 \\
%\Rightarrow \frac12 \frac{d}{ds} |\G(w_0,s) + c - x|^2
%& = & s |w_0|^2 + (c - x) R'(s).
%\end{eqnarray*}
%Also,
%\begin{eqnarray*}
%p(s)
%& = & - \frac12 \int \log |\G(w_0,s) + c - x|^2 \mu[dx] + (1 - \a) \log | \G(w_0,s) | \\
%& = & - \frac12 \int \log |\G(w_0,s) + c - x|^2 \mu[dx] + (1 - \a) \log ( s |w_0| ) \\
%\Rightarrow p'(s)
%& = & - \int \frac{s |w_0|^2 + (c - x) R'(s)}{|\G(w_0,s) + c - x|^2} \mu[dx] + \frac{1 - \a}s.
%\end{eqnarray*}
%The result follows since equations (\ref{eqremmu001}) and (\ref{eqremmu002}) give
%$$
%\int \frac{c - x}{|w_0 + c - x|^2} \mu[dx] = 0
%\hspace{0.5cm} \mbox{and} \hspace{0.5cm}
%\int \frac{|w_0|^2}{|w_0 + c - x|^2} \mu[dx] = 1 - \a.
%$$
%
Taking $g(s) := s \frac{d}{ds} \Re(\G(w_0,s))$ for all $s>0$, equation (\ref{eqG}) gives
$$
p'(s) = \int \frac{ q(s) (c - x)^2 }
{s |\G(w_0,s) + c - x|^2 |w_0 + c - x|^2} \mu[dx],
$$
for all $s>0$, where
$$
q(s) := (1 - s^2) \Im(w_0)^2 + \frac1{1-s^2}
\left( 1 + s^2 + \frac{4 s^2}{1-s^2} \log(s) \right)^2 \Re(w_0)^2.
$$
Note that $q(s)>0$ for all $s \in (0,1)$, $q(1)=0$, $q(s)<0$ for all $s>1$, and
$q'(1) = -2 \Im(w_0)^2$. Therefore $p$ is strictly increasing in $(0,1)$, strictly
decreasing in $(1,\infty)$, and has a global maximum at $1$.
%
%To see that this recall that a Taylor expansion gives
%$\log(s) = s-1 - \frac12 (s-1)^2 + O(s-1)^3$. Thus for all $s$ close to $1$,
%\begin{eqnarray*}
%q(s)
%& = & (1 - s^2) I_0^2 + \frac{R_0^2}{1-s^2}
%\left( 1 + s^2 + \frac{4 s^2}{1-s^2} \log(s) \right)^2 \\
%& = & (1 - s^2) I_0^2 + \frac{R_0^2}{1-s^2}
%\left( 1 + s^2 + \frac{4 s^2}{1-s^2} \left( s-1 - \frac12 (s-1)^2 + O(s-1)^3 \right) \right)^2 \\
%& = & (1 - s^2) I_0^2 + \frac{R_0^2}{1-s^2}
%\left( 1 + s^2 - \frac{4 s^2}{1+s} \left( 1 - \frac12 (s-1) + O(s-1)^2 \right) \right)^2 \\
%& = & (1 - s^2) I_0^2 + \frac{R_0^2}{1-s^2}
%\left( \frac{(1 + s^2)(1+s) - 4 s^2 (\frac32 - \frac12 s) }{1+s} + O(s-1)^2 \right)^2 \\
%& = & (1 - s^2) I_0^2 + \frac{R_0^2}{1-s^2}
%\left( \frac{(1 + s + s^2 + s^3) - (6 s^2 - 2 s^3) }{1+s} + O(s-1)^2 \right)^2 \\
%& = & (1 - s^2) I_0^2 + \frac{R_0^2}{1-s^2}
%\left( \frac{ 1 + s - 5 s^2 + 3 s^3 }{1+s} + O(s-1)^2 \right)^2 \\
%& = & (1 - s^2) I_0^2 + \frac{R_0^2}{1-s^2}
%\left( \frac{ (1 - s)(1 + 2 s - 3 s^2) }{1+s} + O(s-1)^2 \right)^2 \\
%& = & (1 - s^2) I_0^2 + \frac{R_0^2}{1-s^2}
%\left( \frac{ (1 - s)^2(1 + 3 s) }{1+s} + O(s-1)^2 \right)^2 \\
%& = & (1 - s^2) I_0^2 + \frac{R_0^2 (1-s)^3}{1+s}
%\left( \frac{1 + 3 s}{1+s} + O(1) \right)^2.
%\end{eqnarray*}
%Thus $q(1)=0$ and $q'(1) = - 2 I_0^2$. Finally note equation (\ref{eqp0d}) gives
%\begin{eqnarray*}
%p''(s)
%& = & \int \left( \frac{ q'(s) (c - x)^2 }{s |\G(s) + c - x|^2 |w_0 + c - x|^2}
%+ q(s) ( \cdots\cdots ) \right) \mu[dx] \\
%\Rightarrow p''(1)
%& = & \int \frac{ q'(1) (c - x)^2 }{|w_0 + c - x|^4} \mu[dx].
%\end{eqnarray*}
%

Consider $p_{n,v}^\pm$. Proceeding in a similar way to that given above,
\begin{equation}
\label{eqpnpmd}
(p_{n,v}^\pm)'(s) = \int \frac{ r_{n,v}^\pm(s) (c - x) + q_{n,v}^\pm(s) (c - x)^2 }
{s |\G(z_{n,v}^\pm,s) + c - x|^2 |w_{n,v} + c - x|^2} \mu_{n,v}[dx],
\end{equation}
for all $n$ sufficiently large, $v \in V$ and $s>0$, where
\begin{eqnarray*}
r_{n,v}^\pm(s) & := & 2 s^2 |z_{n,v}^\pm|^2 \Re(z_{n,v}^\pm - w_{n,v}) + \frac{2 s^2}{s^2-1}
\left( 1 - \frac{2 s^2 \log(s)}{s^2-1} \right) \Re(z_{n,v}^\pm) (|z_{n,v}^\pm|^2 - |w_{n,v}|), \\
q_{n,v}^\pm(s) & := & (1 - s^2) \Im(z_{n,v}^\pm)^2 + \frac1{1-s^2}
\left( 1 + s^2 + \frac{4 s^2}{1-s^2} \log(s) \right)^2 \Re(z_{n,v}^\pm)^2 \\
& + & |w_{n,v}|^2 - |z_{n,v}^\pm|^2 + \frac{4 s^2}{(s^2-1)^2} (2 \log(s) - s^2 + 1)
\Re(z_{n,v}^\pm) \Re(w_{n,v} - z_{n,v}^\pm).
\end{eqnarray*}
These are well-defined and continuous with
\begin{equation}
\label{eqrnvqnv}
\begin{array}{rcl}
r_{n,v}^\pm(1) & = & 2 |z_{n,v}^\pm|^2 \Re(z_{n,v}^\pm - w_{n,v}) - \Re(z_{n,v}^\pm) (|z_{n,v}^\pm|^2 - |w_{n,v}|^2), \\
q_{n,v}^\pm(1) & = & |w_{n,v}|^2 - |z_{n,v}^\pm|^2 + 2 \Re(z_{n,v}^\pm) \Re(z_{n,v}^\pm - w_{n,v}),
\end{array}
\end{equation}
for all $n$ sufficiently large and $v \in V$.
Also there exists a constant $C>0$ for which
$$
\left| (p_{n,v}^\pm)''(s) + 2 \Im(w_0)^2 \int \frac{(c - x)^2}{|w_0 + c - x|^4} \mu_{n,v}[dx] \right|
\le C \left( |s - 1| + |w_{n,v} - w_0| + n^{-\d} \right),
$$
for all $n$ sufficiently large, $v \in V$ and $s$ sufficiently close to $1$.
Thus, since $\mu_{n,0} \to \mu$ weakly as $n \to \infty$ (see equation (\ref{eqEmpProbMeas})
and hypothesis \ref{hypWeakConv}), Lemma \ref{lemcp} implies that there exists an $\e>0$ for
which $(p_{n,v}^\pm)''(s) < - \e$ for all $n$ sufficiently large, $v \in V$ and $s$ sufficiently
close to $1$. Also note, since $\frac{q_n}n \to \a$ and $\mu_{n,0} \to \mu$ weakly as
$n \to \infty$, Lemma \ref{lemcp} gives
$$
\lim_{n \to \infty} \sup_{v \in V} \sup_{s \in [t_0,T_0]} |p_{n,v}^\pm(s) - p(s)| = 0.
$$
Thus for all $n$ sufficiently large and $v \in V$, since the unique critical point of $p$ in
$(0,\infty)$ is a global maximum at $1$, and since $t_0 < 1 < T_0$, $p_n^\pm$ has a unique
critical point in $[t_0,T_0]$ and this point is a local maximum.
Denoting by $s_{n,v}^\pm$, it also follows that $\sup_{v \in V} |s_{n,v}^\pm - 1| \to 0$ as
$n \to \infty$.
%
%Recall that that there exists an $\e,\xi$ for which
%$$
%\sup_{s \in (1-\e, 1+\e)} \sup_{v \in V} p_{n,v}''(s)
%\le - \xi,
%$$
%for all $n$ sufficiently large. Thus $p_{n,v}$ can possess at most one saddle point in the interval
%$(1-\e, 1+\e)$, which must be a maximum. To show that this exists for all $n$ sufficiently large
%and $v \in V$, recall that $p$ has a global maximum at $1$ and that
%$$
%\lim_{n \to \infty} \sup_{v \in V} \sup_{s \in [t_0,T_0]} |p_{n,v}(s) - p(s)| = 0.
%$$
%Thus there exists an $N \in \N$ for which
%\begin{equation}
%\label{eqtmp1}
%\sup_{s \in [t_0,T_0]} |p_{n,v}(s) - p(s)|
%< \frac12 (p(1) - p(1 \pm \e)).
%\end{equation}
%for all $v \in V$ and $n \ge N$. Therefore
%$$
%p_{n,v}(1 \pm \e)
%< \frac12 (p(1) + p(1 \pm \e))
%< p_{n,v}(1),
%$$
%for all $v \in V$ and $n \ge N$. Thus, for all $v \in V$ and $n \ge N$, $p_{n,v}$ has a maximum
%in the interval $(1-\e, 1+\e)$. Denoting this by $s_{n,v}$, it follows that
%$\sup_{v \in V} |s_{n,v} - 1| \to 0$ since we can do the above analysis for all
%sufficiently small $\e > 0$.
%
%It remains to show that $s_{n,v}$ is a global maximum. Recalling that $p : (0,\infty) \to \R$
%is strictly increasing in $(0,1)$, strictly decreasing in $(1,\infty)$, and has a global maximum
%at $1$, equation (\ref{eqtmp1}) gives
%$$
%\sup_{s \in [t_0,1-\e]} p_{n,v}(s)
%< \frac12 (p(1) - p(1 - \e)) + \sup_{s \in [t_0,1-\e]} p(s)
%= \frac12 (p(1) + p(1 - \e)) < p_{n,v}(1),
%$$
%and similarly for the interval $[1+\e,T_0]$. The result follows since $p_{n,v}(1) \le p_{n,v}(s_{n,v})$.
%

Since $\mu_{n,0} \to \mu$ weakly as $n \to \infty$, equations
(\ref{eqpnpmd}) and (\ref{eqrnvqnv}) give
$$
\lim_{n \to \infty} \sup_{v \in V} \left| n^\d (p_{n,v}^\pm)'(1) \mp  C_0 \right| = 0,
$$
where $C_0 \in \R$ is defined in equation (\ref{eqC0}).
The required result then follows from a similar argument to that given in Lemma \ref{lemfn1fn2}.
\end{proof}

\section{Unitary invariant ensemble}
\label{secUIE}

In this section we consider measures on $\overline{\GT_n}$ induced by the eigenvalue
minor process of a Unitary invariant ensemble (UIE). As in the introduction, for each
$n \in \N$, let $\H_n \subset \C^{n \times n}$ be the set of $n \times n$ complex
Hermitian matrices. Let $A_n \in \H_n$ be a random Unitary matrix with eigenvalue distribution   
$$
d\mu_n [y] = \frac1{Z_n} \Delta_n(y)^2 \left( \prod_{i=1}^n e^{- V(y_i)} \right) \; dy,
$$
for all $y \in \D_n$, where $Z_n > 0$ is a normalisation constant, $V : \R \to \R$ is a
continuous function, and $dy$ is Lebesgue measure on $\R^n$. Then $A_n$ is called a UIE
with {\em potential} $V$. For more information on UIEs see Anderson, Guionnet and Zeitouni,
\cite{And10}, and Mehta, \cite{Mehta04}. It follows from equation (\ref{eqfnGUE}) that the
GUE is the UIE with potential $V(x) = \frac12 x^2$ for all $x \in \R$.

Equations (\ref{eqEigMinProBary}) and (\ref{eqVanDet}) imply that the eigenvalue minor process
of $A_n$ has distribution
$$
d \nu_n [y^{(1)},\ldots,y^{(n)}] = \frac1{Z_n'} \Delta_n(y^{(n)})
\left( \prod_{i=1}^n e^{- V(y_i^{(n)})} \right) \; dy^{(n)} dy^{(n-1)} \ldots dy^{(1)},
$$
for all $(y^{(1)},\ldots,y^{(n)}) \in \overline{\GT_n}$, where $Z_n' > 0$ is a normalisation
constant, and $d y^{(r)}$ is Lebesgue measure on $\R^r$ for each $r$. We now use Theorem
\ref{thmGTDet} to show that $(\overline{\GT_n},\nu_n)$ is a determinantal random point
field, and obtain an expression for the correlation kernel in terms of polynomials which are
orthogonal with respect to the weight $e^{-V(\cdot)} : \R \to \R_+$. We specialise to
classical ensembles in Section \ref{secTCE}.  

\begin{prop}
\label{prUIECorKer}
For each $i,j \ge 0$, let $\psi_i, \psi_j$ be the monic polynomials of degree
$i$ and $j$ (respectively) which satisfy
\begin{equation}
\label{eqOrtPol}
\int_{-\infty}^\infty dx \; \psi_i(x) \psi_j(x) e^{-V(x)} = c_i c_j \delta_{ij},
\end{equation}
for some $c_i,c_j \in \R$. Then the random point field $(\overline{\GT_n},\nu_n)$
is determinantal with correlation kernel $K_n : (\{1,\ldots,n\} \times \R)^2 \to \C$ given by
\begin{eqnarray*}
K_n ((r,u),(s,v))
& = & 1_{v \le u} \sum_{j=n-s}^{n-1} c_j^{-2} \psi_j^{(n-s)}(v) \int_u^\infty dx \;
\frac{(x - u)^{n-r-1}}{(n-r-1)!} \psi_j (x) e^{-V(x)} \\
& - & 1_{v > u} \sum_{j=n-s}^{n-1} c_j^{-2} \psi_j^{(n-s)}(v) \int_{-\infty}^u dx \;
\frac{(x - u)^{n-r-1}}{(n-r-1)!} \psi_j (x) e^{-V(x)},
\end{eqnarray*}
for all $r \in \{1,\ldots,n-1\}$, $s \in \{1,\ldots,n\}$ and $u,v \in \R$, where $\psi_j^{(i)}$
is the $i^{\text{th}}$ derivative of $\psi_j$ for each $i,j$.
\end{prop}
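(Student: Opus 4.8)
The plan is to deduce Proposition~\ref{prUIECorKer} from Theorem~\ref{thmGTDet} by choosing $\phi_i(y) := \psi_{n-i}(y)\,e^{-V(y)}$ for each $i \in \{1,\ldots,n\}$, and then collapsing the resulting integral by means of the orthogonality relation~(\ref{eqOrtPol}). Since each $\psi_{n-i}$ is monic of degree $n-i$, column operations give $\det[\psi_{n-i}(y_j)]_{i,j=1}^n = \Delta_n(y)$, hence $\det[\phi_i(y_j)]_{i,j=1}^n = (\prod_j e^{-V(y_j)})\,\Delta_n(y)$, so the measure $\nu_n$ of the UIE eigenvalue minor process does have the form~(\ref{eqMeasCnCn0}). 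The integrability hypotheses of Theorem~\ref{thmGTDet} reduce to finiteness of $\int_\R |x|^m e^{-V(x)}\,dx$ for all $m$, which is forced by the UIE being well defined. Theorem~\ref{thmGTDet} then applies, with $\Phi_n(y) = (\prod_k \psi_{n-k}(y_k) e^{-V(y_k)})\,\Delta_n(y)$ and $B_n = \int_{\R^n}\Phi_n(y)\,dy \neq 0$.

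Next I would reduce the integral over $\S_n\CC_n$ in Theorem~\ref{thmGTDet} to an integral over $\R^n$ against the symmetrised UIE eigenvalue density. The function of $y$ multiplying $\Phi_n(y)$ inside that integral is symmetric in $y$, while $\Delta_n(y)$ is antisymmetric, so antisymmetrising $\prod_k \phi_k(y_k)$ over $\S_n$ replaces $\Phi_n(y)$ by $\frac{1}{n!}\det[\phi_i(y_j)]_{i,j}\,\Delta_n(y) = \frac{1}{n!}(\prod_j e^{-V(y_j)})\,\Delta_n(y)^2$; the same manipulation gives $B_n = \frac{1}{n!}\hat{Z}_n$ with $\hat{Z}_n := \int_{\R^n}(\prod_j e^{-V(y_j)})\,\Delta_n(y)^2\,dy$, and $\hat{Z}_n = n!\prod_{a=0}^{n-1} c_a^2$ by~(\ref{eqOrtPol}) and the Leibniz expansion. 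Thus, for $v \le u$,
\[
K_n((r,u),(s,v)) = \frac{1}{\hat{Z}_n}\int_{\R^n}\Bigl(\prod_k e^{-V(y_k)}\Bigr)\Delta_n(y)^2 \sum_{j=1}^n 1_{y_j > u}\frac{(y_j-u)^{n-r-1}}{(n-r-1)!}\,\frac{\partial^{n-s}}{\partial v^{n-s}}\prod_{i \neq j}\frac{v - y_i}{y_j - y_i}\,dy,
\]
and for $v > u$ the same expression with $1_{y_j > u}$ replaced by $-\,1_{y_j \le u}$.

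It remains to evaluate this integral. The key algebraic step is the Vandermonde identity $\Delta_n(y)\prod_{i\neq j}\frac{v-y_i}{y_j-y_i} = \Delta_n(y_1,\ldots,y_{j-1},v,y_{j+1},\ldots,y_n)$ (the two factors of $(-1)^{j-1}$ cancel). Writing the remaining $\Delta_n(y)$ as $\det[\psi_{i-1}(y_k)]_{i,k}$ and the $v$-substituted Vandermonde likewise, a cofactor expansion along the column carrying $v$, followed by $\frac{\partial^{n-s}}{\partial v^{n-s}}$, turns the second factor into $\sum_{\ell = n-s}^{n-1}(-1)^{j+\ell+1}\psi_\ell^{(n-s)}(v)\,M_{\ell+1,j}(y)$, where $M_{\ell+1,j}(y)$ is the minor of $[\psi_{i-1}(y_k)]$ obtained by deleting row $\ell+1$ and column $j$. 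Expanding $\det[\psi_{i-1}(y_k)]$ and $M_{\ell+1,j}(y)$ by Leibniz and integrating out the variables $y_k$, $k \neq j$, via~(\ref{eqOrtPol}) forces the two permutations to agree off $j$ and to send $j$ to $\ell+1$; what survives is $(n-1)!\,(-1)^{j+\ell+1}\,c_\ell^{-2}\bigl(\prod_{a=0}^{n-1}c_a^2\bigr)\int \psi_\ell(x)\frac{(x-u)^{n-r-1}}{(n-r-1)!}e^{-V(x)}\,dx$ over the appropriate half-line, the sign $(-1)^{j+\ell+1}$ being exactly the cofactor sign reassembling the two permutation sums into one determinant. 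The two signs $(-1)^{j+\ell+1}$ cancel, the $j$-sum contributes a factor $n$, and $\frac{n!\prod_a c_a^2}{\hat{Z}_n} = 1$, yielding precisely the stated formula (with $\int_u^\infty$ for $v \le u$ and $-\int_{-\infty}^u$ for $v > u$).

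The main obstacle is the sign bookkeeping in the last step: one must verify that the cofactor sign produced by expanding along the $v$-column cancels the sign picked up when the two Leibniz sums (from the $\Delta_n$ factor and from the minor $M_{\ell+1,j}$) are recombined into a single $n\times n$ determinant after the orthogonality integration. Once this is checked, the rest is routine Vandermonde algebra and orthogonal-polynomial bookkeeping.
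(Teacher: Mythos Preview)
Your proposal is correct and follows essentially the same route as the paper: choose $\phi_i(y)=\psi_{n-i}(y)e^{-V(y)}$, apply Theorem~\ref{thmGTDet}, use the identity $\Delta_n(y)\prod_{i\neq j}\frac{v-y_i}{y_j-y_i}=\Delta_n(y_{j,v})$, expand in the orthogonal polynomials, and collapse via~(\ref{eqOrtPol}). The one difference is that you symmetrise $\Phi_n$ to $\tfrac{1}{n!}(\prod_k e^{-V(y_k)})\Delta_n(y)^2$ before integrating, which forces you into the cofactor/Leibniz sign chase you flag as the main obstacle; the paper instead keeps the asymmetric product $\prod_k\psi_{n-k}(y_k)e^{-V(y_k)}$ and integrates it directly against $\Delta_n(y_{j,v})=\det[\psi_{n-l}(y_m)1_{m\neq j}+\psi_{n-l}(v)1_{m=j}]$, so orthogonality kills every permutation except the identity and no sign bookkeeping is needed at all.
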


\begin{proof}
It follows from equation (\ref{eqVanDet}) that
$$
d \nu_n [y^{(1)},\ldots,y^{(n)}] = \frac1{Z_n'}
\det \left[ \psi_{n-l} \left( y^{(n)}_m \right) e^{- V(y^{(n)}_m)} \right]_{l,m=1}^n
\; dy^{(n)} \ldots dy^{(1)},
$$
for all $(y^{(1)},\ldots,y^{(n)}) \in \overline{\GT_n}$. This is written in the form of
equation (\ref{eqMeasCnCn0}) with $\phi_i(x) = \psi_{n-i} (x) e^{-V(x)}$
for all $i \in \{1,\ldots,n\}$ and $x \in \R$.
The fact that $(\overline{\GT_n},\nu_n)$ is determinantal follows immediately from
Theorem \ref{thmGTDet}. The correlation kernel $K_n : (\{1,\ldots,n\} \times \R)^2 \to \C$
is given by
\begin{eqnarray*}
\lefteqn{K_n ((r,u),(s,v))} \\
& = & \frac1{B_n} \frac{\partial^{n-s}}{\partial v^{n-s}} \sum_{j=1}^n
\int_{\R^n} dy \; \left( \prod_{k=1}^n \psi_{n-k} (y_k) e^{-V(y_k)} \right)
1_{v \le u < y_j} \frac{(y_j - u)^{n-r-1}}{(n-r-1)!} \Delta_n(y_{j,v}) \\
& - & \frac1{B_n} \frac{\partial^{n-s}}{\partial v^{n-s}} \sum_{j=1}^n
\int_{\R^n} dy \; \left( \prod_{k=1}^n \psi_{n-k} (y_k) e^{-V(y_k)} \right)
1_{v > u \ge y_j} \frac{(y_j - u)^{n-r-1}}{(n-r-1)!} \Delta_n(y_{j,v}),
\end{eqnarray*}
for all $r \in \{1,\ldots,n-1\}$, $s \in \{1,\ldots,n\}$ and $u,v \in \R$,
where $y_{j,v} := (y_1,\ldots,y_{j-1},v,y_{j+1},\ldots,y_n)$ and
$$
B_n = \int_{\R^n} dy \; \left( \prod_{k=1}^n \psi_{n-k} (y_k) e^{-V(y_k)} \right) \Delta_n(y).
$$
%
%To see this recall that
%\begin{eqnarray*}
%\lefteqn{K_n ((r,u),(s,v))} \\
%& = & \frac1{B_n} \int_{\S_n \CC_n} dy \; \left( \prod_{k=1}^n \phi_k(y_k) \right) \Delta_n(y)
%\sum_{j=1}^n 1_{v \le u < y_j} \frac{(y_j - u)^{n-r-1}}{(n-r-1)!}
%\frac{\partial^{n-s}}{\partial v^{n-s}} \prod_{i \neq j} \left( \frac{v - y_i}{y_j - y_i} \right) \\
%& - & \frac1{B_n} \int_{\S_n \CC_n} dy \; \left( \prod_{k=1}^n \phi_k(y_k) \right) \Delta_n(y)
%\sum_{j=1}^n 1_{v > u \ge y_j} \frac{(y_j - u)^{n-r-1}}{(n-r-1)!}
%\frac{\partial^{n-s}}{\partial v^{n-s}} \prod_{i \neq j} \left( \frac{v - y_i}{y_j - y_i} \right).
%\end{eqnarray*}
%and that
%\begin{eqnarray*}
%B_n
%& = & \int_{\R^n} dx \; \left( \prod_{k=1}^n \psi_{n-k} (x_k) e^{-V(x_k)} \right)
%\Delta_n(x) \\
%& = & \int_{\R^n} dx \; \left( \prod_{k=1}^n \psi_{n-k} (x_k) e^{-V(x_k)} \right)
%\det \left[ \psi_{n-l} (x_m) \right]_{l,m=1}^n.
%\end{eqnarray*}
%The result follows since
%$$
%\frac{\Delta_n(y(j,v))}{\Delta_n(y)} = \prod_{i \neq j} \left( \frac{v - y_i}{y_j - y_i} \right).
%$$
%
Then, writing $\Delta_n(y) = \det[ \psi_{n-l} (y_m) ]_{l,m=1}^n$ (see equation (\ref{eqVanDet})),
equation (\ref{eqOrtPol}) gives $B_n = \prod_{k=0}^{n-1} c_k^2$.
Similarly, writing $\Delta_n(y_{j,v}) = \det[ \psi_{n-l} (y_m) 1_{m \neq j} +
\psi_{n-l}(v) 1_{m=j} ]_{l,m=1}^n$, equation (\ref{eqOrtPol}) gives
the required result.
\end{proof}

Alternatively the correlation kernel in the previous Proposition can be written as a contour
integral. Using the choice of $\phi_i : \R \to \R$ given in the previous Proposition, it follows
from Proposition \ref{prIntExpKl} that 
\begin{eqnarray*}
K_n ((r,u),(s,v))
& = & \frac1{(2\pi)^2} \frac{(n-s)!}{(n-r-1)!} \int_{\g_{u,v}} dw \int_{\G_{u,v}} dz
\; \frac{(z - u)^{n-r-1}}{(w - v)^{n-s+1}} \frac1{w-z} \times \\
& \times & \frac1{B_n} \int_{\R^n} dy \; \left( \prod_{k=1}^n \psi_{n-k} (y_k) e^{-V(y_k)} \right) \Delta_n(y)
\prod_{i=1}^n \left( \frac{w - y_i}{z - y_i} \right),
\end{eqnarray*}
for all $r \in \{1,\ldots,n-2\}$, $s \in \{1,\ldots,n\}$, and $u,v \in \R$ with $u \neq v$.
Here $\g_{u,v}$ is a counter-clockwise simple closed contour around $v$. Also $\G_{u,v} :
\R \to \C$ is a piecewise smooth contour which satisfies $\G_{u,v}(0) = u$,
$\Im(\G_{u,v}(s)) > 0$ for all $s > 0$, $\Im(\G_{u,v}(s)) < 0$ for all $s < 0$, and
$\lim_{s \to \pm \infty} |\G_{u,v}(s)| = \infty$. Moreover the contours are chosen not to intersect.
Then, recalling that $\Delta_n(y) = \det[ \psi_{n-l} (y_m) ]_{l,m=1}^n$ for all $y \in \R^n$,
\begin{eqnarray*}
K_n ((r,u),(s,v))
& = & \frac1{(2\pi)^2} \frac{(n-s)!}{(n-r-1)!} \int_{\g_{u,v}} dw \int_{\G_{u,v}} dz
\; \frac{(z - u)^{n-r-1}}{(w - v)^{n-s+1}} \frac1{w-z} \times \\
& \times & \frac1{n! B_n} \int_{\R^n} dy \; \left( \prod_{k=1}^n e^{-V(y_k)} \right)
\Delta_n(y)^2 \prod_{i=1}^n \left( \frac{w - y_i}{z - y_i} \right),
\end{eqnarray*}
for all $r \in \{1,\ldots,n-2\}$, $s \in \{1,\ldots,n\}$, and $u,v \in \R$ with $u \neq v$.
%
%To see this note that for any permutation $\s \in \S_n$
%\begin{eqnarray*}
%\lefteqn{\int_{\R^n} dy \; \left( \prod_{k=1}^n \psi_{n-k}(y_k) e^{-V(y_k)} \right)
%\Delta_n(y) \prod_{i=1}^n \left( \frac{w - y_i}{z - y_i} \right)} \\
%& = & \int_{\R^n} d \s(y) \; \left( \prod_{k=1}^n \psi_{n-k}(y_{\s(k)}) e^{-V(y_{\s(k)})} \right)
%\Delta_n(\s(y)) \prod_{i=1}^n \left( \frac{w - y_{\s(i)}}{z - y_{\s(i)}} \right) \\
%& = & \int_{\R^n} dy \; \left( \prod_{k=1}^n \psi_{n-{\s^{-1}(k)}}(y_k) e^{-V(y_k)} \right)
%\sgn(\s) \Delta_n(y) \prod_{i=1}^n \left( \frac{w - y_i}{z - y_i} \right).
%\end{eqnarray*}
%Summing over all permutations gives the required result.
%
Then, recalling that $B_n = \prod_{k=0}^{n-1} c_k^2$ (see proof of Proposition \ref{prUIECorKer}),
Fyodorov and Strahov, \cite{Fyod03}, gives
\begin{eqnarray*}
K_n ((r,u),(s,v))
& = & \frac1{(2\pi)^2} \frac{(n-s)!}{(n-r-1)!} \int_{\g_{u,v}} dw \int_{\G_{u,v}} dz
\; \frac{(z - u)^{n-r-1}}{(w - v)^{n-s+1}} \frac1{w-z} \times \\
& \times & c_{n-1}^{-2} \int_{-\infty}^\infty dx \; e^{-V(x)} \;
\frac{\psi_{n-1}(x) \psi_n(w) - \psi_n(x) \psi_{n-1}(w)}{z-x},
\end{eqnarray*}
for all $r \in \{1,\ldots,n-2\}$, $s \in \{1,\ldots,n\}$, and $u,v \in \R$ with $u \neq v$.

\subsection{The classical ensembles}
\label{secTCE}

We end this paper by showing that the expression for the correlation kernel obtained
in Proposition \ref{prUIECorKer} in the special case of classical UIEs, agrees with the
expression obtained by Johansson and Nordenstam, \cite{Joh06a}, for the GUE
(see equation (\ref{eqJnGUE})). By classical UIEs we mean those that satisfy the
{\em generalised Rodrigues formula}:
\begin{hyp}
\label{hypUIEgrf}
There exists a function $\a : \R \to \R \setminus \{0\}$ for which
$\lim_{x \to \pm \infty} x^j \a(x)^k e^{-V(x)} = 0$ for all $j,k \in \N$. Also
there exists and a sequence, $\{a_{j,k}\}_{j,k \in \N} \subset \R \setminus \{0\}$,
for which
$$
\psi_j^{(k)}(x) = a_{j,k} \; \a(x)^{-k} e^{V(x)} \;
\frac{d^{j-k}}{dx^{j-k}} \left( \a(x)^j e^{-V(x)} \right),
$$
for all $x \in \R$ and $j,k \in \N$ with $j \ge k$.
\end{hyp}

First note an alternative expression for the correlation kernel in Proposition \ref{prUIECorKer}
can be obtained using equation $(3.1.12)$ of Szeg\"{o}, \cite{Sze39}. This gives
$$
1_{s > r} \frac{(v-u)^{s-r-1}}{(s-r-1)!}
= \frac{\partial^{n-s}}{\partial v^{n-s}}
\int_{-\infty}^\infty dx \; \frac{(x-u)^{n-r-1}}{(n-r-1)!}
\left( \sum_{j=0}^{n-1} c_j^{-2} \psi_j(x) \psi_j(v) e^{-V(x)} \right),
$$
for all $r \in \{1,\ldots,n-1\}$, $s \in \{1,\ldots,n\}$ and $u,v \in \R$.
Proposition \ref{prUIECorKer} then implies that
$$
K_n((r,u),(s,v))
= \sum_{j=n-s}^{n-1} c_j^{-2} \psi_j^{(n-s)} (v)
\int_u^\infty dx \frac{(x-u)^{n-r-1}}{(n-r-1)!} \psi_j (x) e^{-V(x)}
- \frac{(v-u)^{s-r-1}}{(s-r-1)!} 1_{v>u} 1_{s>r},
$$
for all $r \in \{1,\ldots,n-1\}$, $s \in \{1,\ldots,n\}$ and $u,v \in \R$.
%
%To see this note
%\begin{eqnarray*}
%K_n ((r,u),(s,v))
%& = & 1_{v \le u} \sum_{j=n-s}^{n-1} c_j^{-2} \psi_j^{(n-s)}(v) \int_u^\infty dx \;
%\frac{(x - u)^{n-r-1}}{(n-r-1)!} \psi_j (x) e^{-V(x)} \\
%& - & 1_{v > u} \sum_{j=n-s}^{n-1} c_j^{-2} \psi_j^{(n-s)}(v) \int_{-\infty}^u dx \;
%\frac{(x - u)^{n-r-1}}{(n-r-1)!} \psi_j (x) e^{-V(x)} \\
%& = & 1_{v \le u} \sum_{j=n-s}^{n-1} c_j^{-2} \psi_j^{(n-s)}(v) \int_u^\infty dx \;
%\frac{(x - u)^{n-r-1}}{(n-r-1)!} \psi_j (x) e^{-V(x)} \\
%& + & 1_{v > u} \sum_{j=n-s}^{n-1} c_j^{-2} \psi_j^{(n-s)}(v) \int_u^\infty dx \;
%\frac{(x - u)^{n-r-1}}{(n-r-1)!} \psi_j (x) e^{-V(x)} \\
%& - & 1_{v > u} \sum_{j=n-s}^{n-1} c_j^{-2} \psi_j^{(n-s)}(v) \int_{-\infty}^\infty dx \;
%\frac{(x - u)^{n-r-1}}{(n-r-1)!} \psi_j (x) e^{-V(x)}.
%\end{eqnarray*}
%
Applying the Rodrigues formula inside the integral, and integrating by parts gives
\begin{eqnarray*}
\lefteqn{K_n((r,u),(s,v))
\; = \; (-1)^{n-r} \sum_{j=-\infty}^{n-1} \left( c_j^{-2} \frac{a_{j,0}}{a_{j,n-r}} \right)
\psi_j^{(n-r)}(u) \psi_j^{(n-s)} (v) \a(u)^{n-r} e^{-V(u)}} \\
& + & 1_{s>r} \left( \sum_{j=n-s}^{n-r-1} (-1)^j c_j^{-2} a_{j,0} \psi_j^{(n-s)} (v)
\int_u^\infty dx \frac{(x-u)^{n-r-1-j}}{(n-r-1-j)!} \a(x)^j e^{-V(x)}
- \frac{(v-u)^{s-r-1}}{(s-r-1)!} 1_{v>u} \right),
\end{eqnarray*}
for all $r \in \{1,\ldots,n-1\}$, $s \in \{1,\ldots,n\}$ and $u,v \in \R$.

In the special cases of the GUE, Laguerre and Jacobi ensembles, the above
correlation kernel agrees with that given in equation (4.15) of Forrester and Nagao,
\cite{Forr11}. In particular, in the GUE case, recall that (see, for example,
Anderson, Guionnet and Zeitouni, \cite{And10}) for all $j \in \N$ and $x \in \R$,
$\psi_j = H_j$ (the monic Hermite polynomial of degree $j$),
$c_j = \sqrt{\sqrt{2\pi} \; j!}$, $\a(x) = 1$, and $a_{j,k} = (-1)^{j-k} \frac{j!}{(j-k)!}$
for all $k \le j$. Also $H_{j+1}'(x) = (j + 1) H_j(x)$ for all $j \in \N$ and $x \in \R$.
%
%To see this recall that $H_j' = j H_{j-1}$, and
%$$
%H_j(x) := (-1)^j e^{\frac12 x^2} \frac{d^j}{dx^j} \left( e^{-\frac12 x^2} \right).
%$$
%Therefore,
%$$
%H_j^{(s)}(x)
%= \frac{j!}{(j-s)!} H_{j-s}(x)
%= (-1)^{j-s} \frac{j!}{(j-s)!} e^{\frac12 x^2} \frac{d^{j-s}}{dx^{j-s}} \left( e^{-\frac12 x^2} \right).
%$$
%The values for $\a$ and $a$ follow by comparing with the Rodriguez formula,
%$$
%\psi_j^{(s)}(x) = a_{j,s} \a(x)^{-s} e^{V(x)} \;
%\frac{d^{j-s}}{dx^{j-s}} \left( \a(x)^j e^{-V(x)} \right),
%$$
%
Therefore
\begin{eqnarray*}
\lefteqn{K_n((r,u),(s,v))
\; = \; \sum_{j=-\infty}^{-1} \frac1{\sqrt{2\pi} (j+s)!} H_{j+r}(u) H_{j+s}(v) e^{-\frac12 u^2}} \\
& + & 1_{s>r} \left( \sum_{j=-s}^{-r-1} \frac1{\sqrt{2\pi} (j+s)!} H_{j+s}(v)
\int_u^\infty dx \frac{(x-u)^{-r-1-j}}{(-r-1-j)!} e^{- \frac12 x^2}
- \frac{(v-u)^{s-r-1}}{(s-r-1)!} 1_{v>u} \right),
\end{eqnarray*}
for all $r \in \{1,\ldots,n-1\}$, $s \in \{1,\ldots,n\}$ and $u,v \in \R$.
%
%To see this note
%\begin{eqnarray*}
%\lefteqn{K_n((r,u),(s,v))} \\
%& = & (-1)^{n-r} \sum_{j=-\infty}^{n-1} \left( \frac1{\sqrt{2\pi} j!} \frac{(-1)^j}{(-1)^{j-n+r}
%\frac{j!}{(j-n+r)!}} \right) \frac{j!}{(j-n+r)!} H_{j-n+r}(u) \frac{j!}{(j-n+s)!} H_{j-n+s}(v)
%e^{-\frac12 u^2} \\
%& + & 1_{s>r} \left( \sum_{j=n-s}^{n-r-1} (-1)^j \frac1{\sqrt{2\pi} j!} (-1)^j \frac{j!}{(j-n+s)!}
%H_{j-n+s}(v) \int_u^\infty dx \frac{(x-u)^{n-r-1-j}}{(n-r-1-j)!} e^{- \frac12 x^2}
%- \frac{(v-u)^{s-r-1}}{(s-r-1)!} 1_{v>u} \right) \\
%& = & \sum_{j=-\infty}^{n-1} \frac1{\sqrt{2\pi} (j-n+s)!} H_{j-n+r}(u) H_{j-n+s}(v)
%e^{-\frac12 u^2} \\
%& + & 1_{s>r} \left( \sum_{j=n-s}^{n-r-1} \frac1{\sqrt{2\pi} (j-n+s)!} H_{j-n+s}(v)
%\int_u^\infty dx \frac{(x-u)^{n-r-1-j}}{(n-r-1-j)!} e^{- \frac12 x^2}
%- \frac{(v-u)^{s-r-1}}{(s-r-1)!} 1_{v>u} \right).
%\end{eqnarray*}
%
This agrees with the kernel given in equation (\ref{eqJnGUE}) (see remark \ref{remCorKer}),
and with equation (4.15) of Forrester and Nagao, \cite{Forr11}. Similarly for Jacobi and
Laguerre ensembles.

\vspace{0.5cm}

\textbf{Acknowledgements:} This research was carried out in University College Cork,
the Institut Mittag-Leffler, Universit\'{e} Paris VI, and the Royal Institute
of Technology (KTH). This research was partially supported by the G\"{o}ran Gustafsson
Foundation (KTH/UU). Special thanks to Benoit Collins, Neil O' Connell and Kurt Johansson,
and also to the anonymous referee and Associate Editor for helpful comments and suggestions
which have led to an improved version of the paper.

\end{document}